\tikzset{pullback/.style={minimum size=1.2ex,path picture={
\draw[opacity=1,black,-,#1] (-0.5ex,-0.5ex) -- (0.5ex,-0.5ex) -- (0.5ex,0.5ex);%
}}}
\theoremstyle{plain}
\newtheorem{theorem}{Theorem}[section]
\newtheorem{proposition}[theorem]{Proposition}
\newtheorem{lemma}[theorem]{Lemma}
\newtheorem{corollary}[theorem]{Corollary}
\newtheorem{fact}[theorem]{Fact}
\theoremstyle{definition}
\newtheorem{example}[theorem]{Example}
\newtheorem{definition}[theorem]{Definition}
\newtheorem{remark}[theorem]{Remark}
\newtheorem{notation}[theorem]{Notation}
\newtheorem{setting}[theorem]{Setting}
\newcommand{\dq}[1]{``#1"}
\newcommand{\memo}[1]{}
\newcommand{\invmemo}[1]{}
\newcommand{\para}[1]{\paragraph{\textbf{#1}}}
\newcommand{\N}{\mathbb{N}}
\newcommand{\Z}{\mathbb{Z}}
\newcommand{\A}{\mathcal{A}}
\newcommand{\B}{\mathcal{B}}
\newcommand{\C}{\mathcal{C}}
\newcommand{\D}{\mathcal{D}}
\newcommand{\E}{\mathcal{E}}
\newcommand{\F}{\mathcal{F}}
\renewcommand{\L}{\mathcal{L}}
\newcommand{\id}{\mathrm{id}}
\newcommand{\ob}{\mathrm{ob}}
\newcommand{\op}{\mathrm{op}}
\newcommand{\Le}{\mathrm{Le}}
\newcommand{\Image}{\mathrm{Im}}
\newcommand{\Set}{\mathbf{Set}}
\newcommand{\FinSet}{\mathbf{FinSet}}
\newcommand{\PSh}{\mathbf{PSh}}
\newcommand{\OC}{\mathbb{A}}
\newcommand{\iOC}{\mathbb{A}^{\circ}}
\newcommand{\Func}[2]{[#1,#2]}
\newcommand{\Lstr}{\mathbf{Str}_{\L}}
\newcommand{\Geom}{\mathbf{Geom}}
\newcommand{\lar}[1]{\overset{#1}{\leftarrow}}
\newcommand{\demph}[1]{\textbf{\textit{#1}}}
\newcommand{\gmX}{\overline{X}}
\newcommand{\lm}{\lambda}
\newcommand{\Lm}{\Lambda}
\newcommand{\uniobj}{\mathscr{X}}
\newcommand{\uniinhobj}{\mathscr{X}^{\circ}}
\newcommand{\X}{\mathbb{X}}
\renewcommand{\P}{\mathscr{P}}
\newcommand{\neFinSet}{\mathbf{FinSet}^{\circ}}
\title[Solution to Lawvere's first problem]{Solution to Lawvere's first problem: \\a Grothendieck topos that has proper class many quotient topoi}
\author{Yuhi Kamio, Ryuya Hora}
\thanks{Graduate School of Mathematical Sciences, The University of Tokyo. \url{emirp13@g.ecc.u-tokyo.ac.jp}}
\thanks{Graduate School of Mathematical Sciences, University of Tokyo. \url{hora@ms.u-tokyo.ac.jp}}
\subjclass[2020]{18F10, 18B25, 03C50}
\keywords{Grothendieck topos, quotient topos, rigid objects, relational structure, classifying topos}
\begin{document}


\begin{abstract}
    This paper solves the first of the open problems in topos theory posted by William Lawvere, concerning the existence of a Grothendieck topos that has proper class many quotient topoi. This paper concretely constructs such Grothendieck topoi, including the presheaf topos on the free monoid generated by countably infinitely many elements $\mathbf{PSh}(M_\omega)$. Utilizing the combinatorics of the classifying topos of the theory of inhabited objects and with the help of a system of pairing functions, the problem is reduced to a theorem of Vop\v{e}nka, Pultr, and Hedrl\'{ı}n, which states that any set admits a rigid relational structure.
\end{abstract}

\maketitle
\tableofcontents

\section{Introduction}
\para{Universal appearance and challenges of Quotient Topoi}
Quotient topoi of a topos are connected geometric morphisms from the topos (see \cref{SectionQuotientTopoi}). Quotient topoi, being very elementary concepts (in duality with subtoposes), have played a fundamental role in both the logical and geometrical aspects of topos theory. 
Examples of the appearance of quotient topoi (=connected geometric morphisms) include the theory of hyperconnected-localic factorization \cite{johnstone1981factorization}, local maps \cite{johnstone1989local}, isotropy group \cite{henry2018localic}, and cohesive topoi \cite{lawvere2007axiomatic}.
In terms of the analogy between logos theory (which is the dual algebra of the topos geometry) and ring theory as emphasized by Anel and Joyal \cite{anel2021topo}, a quotient topos (= a sublogos) corresponds to the concept of a subring in ring theory (See also \cite[6.3.6]{lurie2009higher}). 
In a manner of speaking, the universal appearance of quotient topoi in topos theory is analogous to the universal appearance of subrings in ring theory.


Contrary to the universal role of quotient topoi, comprehending the entirety of quotient topoi remains challenging. For instance, while the dual concept of subtopos is well-known to be classified by the Lawvere-Tierney topology, no such “internal parameterization” is known for quotient toposes. Although several papers \cite{rosenthal1982quotient,el2002simultaneously,henry2018localic, hora2024internal, hora2024quotient} have succeeded in classifying some quotient toposes of some toposes, even the number of quotient topoi is not known. More precisely, the question of whether the number of quotient topoi of a Grothendieck topos is small remains open. This is the open problem we will solve in this paper.

\para{Lawvere's open problems in topos theory}
William Lawvere has posed seven open problems in topos theory in \cite{Open240411LawvereRevised}. The problem of the number of quotient topoi is selected as the first of the seven open problems and called \dq{Quotient toposes}:
\begin{quote}
    [...] Is there a Grothendieck topos for which the number of these quotients is not small? [...] \cite{Open240411LawvereRevised}
\end{quote}
About the importance of these seven open problems, Lawvere says
\begin{quote}
    Clarification on them would further advance work on topos theory and its applications to thermomechanics, to algebraic geometry, and to logic. \cite{Open240411LawvereRevised}
\end{quote}


\para{Contribution}
This paper provides a complete solution to this open problem by concretely constructing a Grothendieck topos and its proper class many (mutually non-equivalent) quotient topoi (\cref{MainTheorem}).

In particular, \cref{CorollaryCountableMonoid} shows that for a countably infinitely generated free monoid $M_{\omega}$, the topos $\PSh(M_{\omega})$ has proper class many quotient topoi. This result could be regarded as a continuation of the authors' previous result \cite{hora2024quotient} of classification of quotient topoi for the topos of discrete dynamical systems $\PSh(\N)\simeq\PSh(M_1)$.

\para{Overview}
We will construct proper class many mutually non-isomorphic connected geometric morphisms $\E_{\L} \to \iOC$ from a well-designed Grothendieck topos $\E_{\L}$ to the topos of the classifying topos of the theory of inhabited objects $\iOC=\Func{\neFinSet}{\Set}$. Thanks to the universality of the classifying topos $\iOC$, the problem is reduced to the construction of inhabited objects of the topos $\E_{\L}$ (corresponding to connected geometric morphisms). \Cref{SectionLengthCombinatoricsAndClassifiers} is devoted to the combinatorics of the classifying topoi $\iOC$ and its embedding $\iota\colon \iOC \hookrightarrow \OC$ into the classifying topos of the theory of objects $\OC=\Func{\FinSet}{\Set}$. 

An inhabited object $X\in \ob(\E)$ should be \dq{sufficiently rigid}, which we will call \demph{inhabited-topos-rigid}, in order to make the corresponding geometric morphism $\E \to \iOC$ connected. Recall that an object $X$ of a category is \demph{rigid} if it has no non-trivial endomorphisms. Similarly, we will define stronger notions of rigidity in \cref{SectionRigidities} and gradually increase the rigidity in \cref{SectionLexRigidityAndLStructures,SectionPresheafEncodingAndMainTheorem} to construct inhabited-topos-rigid objects in the topos $\E_{\L}$.
\begin{description}
    \item[rigid] \Cref{PropositionManyRigid} in the category of $\L$-structures $\Lstr$
    \item[inhabited-lex-rigid] \Cref{PropositionManyNonEmptyLexRigid} in the category of $\L$-structures $\Lstr$
    \item[inhabited-topos-rigid] \Cref{MainTheorem} in the \dq{encoding topos(\cref{DefinitionPresheafEncoding})} $\E_{\L}$ 
\end{description}
To make lex-rigid objects and inhabited-topos-rigid objects, a pairing function plays a key role. In particular, to construct inhabited-topos-rigid objects, an $\aleph_0$-pairing function is used to control the image of a function (\cref{ProppositionImagefBounded}).



    
\subsection*{Clarification of the roles of the authors} The two authors have been discussing this problem since 2022. The initial rough proof was provided solely by the first-listed author, Yuhi Kamio. The second-listed author, Ryuya Hora, pointed out a singularity when the length is $0$ or $1$ and proposed a formulation using the classifying topos of the theory of inhabited objects. The writing of the paper was a collaborative effort.

\subsection*{Acknowledgement}
We would like to thank Ryu Hasegawa, who is the supervisor of the second-listed author, for his helpful discussions and suggestions.
We would like to thank Koshiro Ichikawa for some discussions about rigid structures and for answering technical questions about \cref{PropositionManyRigid}.
We would like to express deep gratitude to the anonymous reviewers who carefully read the manuscript and provided numerous valuable comments. This greatly assisted the inexperienced authors in enhancing the quality of the paper.

We are also grateful for the discussions at the \dq{mspace topos} and would like to extend our gratitude to its organizers, Toshihiko Nakazawa and Fumiharu Kato, for fostering such an encouraging environment.

The second-listed author is supported by FoPM, WINGS Program, the University of Tokyo.



\section{Preliminaries on Quotient topoi}\label{SectionQuotientTopoi}
This section aims to summarize the basic definitions and conventions of quotient topoi.
\begin{definition}
    A geometric morphism $f\colon \E \to \F$ is \demph{connected} if the left adjoint part $f^{\ast}$ is fully faithful.
\end{definition}
For the notion of connected geometric morphisms and its geometric intuition, see \cite{johnstone2002sketchesv2}.
\begin{definition}
    A \demph{quotient topos} of a topos $\E$ is a(n equivalence class of) connected geometric morphism from the topos $\E$.
\end{definition}

\begin{remark}[Equivalence of quotient topoi]
    Rigorously speaking, \dq{the number of quotient topoi} must mean the number of equivalence classes of quotient topoi in order to make it a meaningful question. Otherwise, every Grothendieck topos $\E$ has proper class many quotient topoi trivially since every equivalence of categories $\E \to \E'$ defines a quotient topos.
    
    Two quotient topoi $f\colon \E \to \F$ and $f' \colon \E \to \F'$ are \demph{equivalent} if there exists an equivalence of categories $e\colon \F \to \F'$ such that the following diagram is commutative up to natural isomorphism
    \[
    \begin{tikzcd}[row sep = 10pt]
        &\F\ar[dd,"e"{name=A}]\\
        \E\ar[ru,"f"]\ar[rd,"f'"']\ar[to = A,phantom, "\cong"]&\\
        &\F'.
    \end{tikzcd}
    \]
\end{remark}

\section{Length and minimal expression in the classifying topos of the theory of objects and inhabited objects}\label{SectionLengthCombinatoricsAndClassifiers}

\subsection{Length and minimal expression in the classifying topos of the theory of objects}

As a preparation for our construction of proper class many quotient topoi, we recall the notion of the classifying topos of the theory of objects, and define the notion of \demph{length} and \demph{minimal expression}.

\subsubsection{Definitions and basic properties}


\begin{notation}We adopt the following notations and terminologies.
\begin{itemize}
    \item We write $\OC$ for the (covariant) functor category $[\FinSet,\Set]$, which is known to be \demph{the classifying topos of the theory of objects}\footnote{This \dq{affine line} notation is borrowed from \cite{anel2021topo}.}. See \Cref{FactEquivObjectClassifier} for its universality.
    \item For two topoi $\E,\F$, the symbol $\Geom(\E,\F)$ denotes the category of geometric morphisms from $\E$ to $\F$ and natural transformations between the left adjoints.
    \item For non-negative integer $n$, $[n]$ denotes the $n$-element set $\{0,1,2,\dots ,n-1\}$. 
    \item The covariant functor $\FinSet \to \Set$ represented by the $n$-element set $[n]$ is denoted by $y[n]$. 
    \item The \demph{universal object} in $\OC$, which is the canonical embedding functor $\FinSet \to \Set$, is denoted by $\uniobj$. (The universal object $\uniobj$ is naturally isomorphic to the representable functor $y[1]$.)
\end{itemize}
\end{notation}

\begin{fact}[See \cite{maclane1994sheaves}]\label{FactEquivObjectClassifier}
    For a Grothendieck topos $\E$, the functor
    \[
    \Geom(\E,\OC)\to \E \colon f \mapsto f^{\ast}(\uniobj)
    \]
    is an equivalence of categories.
\end{fact}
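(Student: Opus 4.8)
The plan is to recognise $\OC$ as a presheaf topos and then invoke Diaconescu's theorem. Fixing a small skeleton of $\FinSet$, we have $\OC=[\FinSet,\Set]=\PSh(\FinSet^{\op})$, and $\FinSet^{\op}$ is finitely complete since $\FinSet$ is finitely cocomplete. Under this identification the universal object $\uniobj\cong y[1]$ becomes the representable presheaf at the one-element set $[1]$, viewed as an object of $\FinSet^{\op}$.

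First I would apply Diaconescu's theorem in the form: for a small category $\C$ and a Grothendieck topos $\E$, precomposition with the Yoneda embedding is an equivalence $\Geom(\E,\PSh(\C))\xrightarrow{\ \sim\ }\mathbf{Flat}(\C,\E)$, $f\mapsto f^{\ast}\circ y_{\C}$, with pseudo-inverse given by left Kan extension along $y_{\C}$. Since $\FinSet^{\op}$ has finite limits, a functor $\FinSet^{\op}\to\E$ is flat precisely when it is left exact: a lex functor has a lex left Kan extension along Yoneda, and conversely $F=(\mathrm{Lan}_{y}F)\circ y_{\C}$ is lex whenever $\mathrm{Lan}_{y}F$ is, $y_{\C}$ being lex. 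Hence $\Geom(\E,\OC)\simeq\mathbf{Lex}(\FinSet^{\op},\E)$, with $f$ corresponding to $f^{\ast}\circ y_{\C}\colon\FinSet^{\op}\to\E$.

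It remains to identify $\mathbf{Lex}(\FinSet^{\op},\E)$ with $\E$ via evaluation at $[1]$; this expresses that skeletal $\FinSet^{\op}$ is the free finitely complete category on a single object (equivalently, the syntactic category of the empty theory over the empty signature, whose models in $\E$ are just objects of $\E$). Concretely, every object of $\FinSet^{\op}$ is a finite power $[n]=[1]^{n}$ of the generator $[1]$, and $\mathrm{Hom}_{\FinSet^{\op}}([n],[m])$ consists precisely of the $m$-tuples of projections $[n]\to[1]$, matching the functions $[m]\to[n]$; so adjoining equalizers produces no new maps. Thus for $X\in\E$ the assignment $[n]\mapsto X^{n}$, with the evident action on morphisms, is the unique left-exact functor $F_X\colon\FinSet^{\op}\to\E$ with $F_X([1])\cong X$, and one checks that every object of $\mathbf{Lex}(\FinSet^{\op},\E)$ is isomorphic to some $F_X$. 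Faithfulness and fullness of $\mathrm{ev}_{[1]}$ follow since a natural transformation $F\Rightarrow G$ between left-exact functors is, by naturality along the product projections $[n]\to[1]$, determined by — and freely reconstructible from — its component at $[1]$. Composing the two equivalences sends $f$ to $(f^{\ast}\circ y_{\C})([1])=f^{\ast}(y[1])\cong f^{\ast}(\uniobj)$, which is the stated functor.

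The routine but slightly delicate points are verifying that $F_X$ preserves \emph{all} finite limits, not merely finite products — equivalently that it carries coequalizers in $\FinSet$ to equalizers in $\E$, which works because those equalizers are realised by the sub-powers $X^{k}\hookrightarrow X^{n}$ — together with checking that the correspondences behave correctly on morphisms and are natural in $\E$. Everything else is a direct application of Diaconescu's theorem, which we take as known (see \cite{maclane1994sheaves}).
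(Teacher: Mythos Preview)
Your proof is correct. The paper does not supply its own proof of this statement --- it is recorded as a Fact with a citation to \cite{maclane1994sheaves} --- so there is nothing in the paper to compare your argument against. The route you take via Diaconescu's theorem, identifying $\OC$ with $\PSh(\FinSet^{\op})$, reducing $\Geom(\E,\OC)$ to $\mathbf{Lex}(\FinSet^{\op},\E)$, and then invoking that $\FinSet^{\op}$ is the free finitely complete category on one generator (so that evaluation at $[1]$ is an equivalence with $\E$), is the standard one and is essentially what the cited reference contains.
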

Since this equivalence of categories is a fundamental tool throughout this paper, this subsection aims to recall the concrete correspondence and introduce related notations.

\begin{notation}\label{NotationTensorFunctor}
    For a Grothendieck topos $\E$ and a geometric morphism $f\colon \E \to \OC$ corresponding to an object $X \in \ob(\E)$, the left adjoint part $f^{\ast}\colon \OC \to \E$ will be denoted by $X\otimes {-}\colon \OC \to \E$. 
\end{notation}

\begin{remark}[As a left Kan extension]\label{remark:TensorAsKanext}
    The above tensor functor is nothing other than a left Kan extension along the Yoneda embedding. In fact, the functor $X\otimes {-}$ in \cref{NotationTensorFunctor} is the left Kan extension
    \[
    \begin{tikzcd}
        \FinSet^{\op}\ar[dr, hookrightarrow, "y"'] \ar[rr, "X^{\bullet}", ""'{name=A}]&{}&\E\\
        &\OC=\Set^{\FinSet}, \ar[ru, "\mathrm{Lan}_{y} X^{\bullet}"', ""{name=B}]\ar[from=A, to=2-2, Rightarrow, "\cong"',"\eta"]&
    \end{tikzcd}
    \]
    where $X^{\bullet}\colon \FinSet^{\op}\to \E$ is the lex functor that sends $[n]$ to $X^n$. This is a special case of a classifying topos of finite limit theory, which is explained in terms of Kan extension in some textbooks, including \cite[Theorem 4.2.1.]{borceux1994handbookv3}.
\end{remark}
By a general reason (for example, see \cite[X.4.]{Maclane1998CWMcategories}), this functor $X\otimes {-}$ is described in terms of coends as follows.
\begin{proposition}
    For a Grothendieck topos $\E$, an object $X \in \ob(\E)$, and an object $F\in \ob(\OC)$, the object $X\otimes F$ is given by the following coend:
    \[
    X\otimes F \cong  \int^{A\in \FinSet} X^A \times FA \textup{ (in $\E$)},
    \]
    where the object $X^A$ is the $A$-times product of the object $X$ and $X^A \times FA$ is its $FA$-times coproduct.
    In particular, if the topos $\E$ is the category of sets $\Set$ and the object $X$ is a set,
    an element of the set $X\otimes F$ is an equivalence class of $(X\lar{f} A, \sigma \in FA)$ with the equivalence relation generated by
    \[
    (X\lar{f\circ g}B, \sigma \in FB) \sim (X\lar{f} A, g\cdot  \sigma \in FA).
    \]
\end{proposition}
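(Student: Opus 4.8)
The plan is to exploit that the functor $X \otimes {-} = f^{\ast}\colon \OC \to \E$ is simultaneously \emph{cocontinuous} (it is the left adjoint part of a geometric morphism, by \cref{NotationTensorFunctor}) and \emph{left exact} (it is the inverse image part of a geometric morphism), together with the normalization $X\otimes\uniobj \cong X$ coming from \cref{FactEquivObjectClassifier}. First I would invoke the density (co-Yoneda) theorem for the presheaf topos $\OC = [\FinSet,\Set]$: every $F\in\ob(\OC)$ is the coend $F \cong \int^{A\in\FinSet} y[A]\times FA$, where $y[A]\times FA$ denotes the $FA$-fold coproduct of the representable $y[A] = \FinSet(A,-)$. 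Since $\FinSet$ is essentially small and $\E$ is cocomplete, this coend — and every other coend below — exists in $\E$.

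Second, I would identify $X\otimes y[A]$. The universal object $\uniobj$ is the inclusion $\FinSet\hookrightarrow\Set$, which is naturally isomorphic to $y[1] = \FinSet([1],-)$ (evaluate at $B$: $\FinSet([1],B)\cong B$); hence $X\otimes y[1] = f^{\ast}(\uniobj) \cong X$. For general finite $A$ one has $y[A] \cong \prod_{a\in A} y[1]$ in $\OC$, since evaluating both sides at $B$ gives the natural bijection $\FinSet(A,B)\cong B^A$. Because $f^{\ast}$ preserves finite products, $X\otimes y[A] = f^{\ast}\!\big(\prod_{a\in A} y[1]\big) \cong \prod_{a\in A} f^{\ast}(y[1]) \cong X^A$. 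This is the one step where left exactness of $f^{\ast}$ is used essentially.

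Third, I would feed the density presentation of $F$ through the cocontinuous functor $X\otimes{-}$, which commutes with the coend and with the copowers, to obtain
\[
X\otimes F \;\cong\; X\otimes \int^{A} y[A]\times FA \;\cong\; \int^{A} (X\otimes y[A])\times FA \;\cong\; \int^{A\in\FinSet} X^A\times FA,
\]
the coend being formed from the bifunctor $\FinSet^{\op}\times\FinSet\to\E$, $(A,B)\mapsto X^A\times FB$, contravariant in the first variable via $A\mapsto X^A$ and covariant in the second via $F$. This proves the first assertion. Finally, for $\E=\Set$ and $X$ a set, I would spell out this $\Set$-valued coend as the coequalizer of $\coprod_{g\colon B\to A}\big(X^A\times FB\big) \rightrightarrows \coprod_{A}\big(X^A\times FA\big)$ whose two legs send $(f\colon A\to X,\ \sigma\in FB)$ to $(f\circ g\colon B\to X,\ \sigma)$ and to $(f,\ Fg(\sigma))$ respectively; passing to the quotient identifies the coend with the set of equivalence classes of pairs $(X\lar{f}A,\ \sigma\in FA)$ under the relation generated by $(X\lar{f\circ g}B,\ \sigma\in FB)\sim(X\lar{f}A,\ g\cdot\sigma\in FA)$, as claimed. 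I do not expect a genuine obstacle; the only care needed is keeping the two variances straight (contravariant through $X^{(-)}$, covariant through $F$) so that the coend is well-formed and its coequalizer presentation matches the stated relation.
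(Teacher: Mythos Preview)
The paper states this proposition without proof, treating the coend formula as a standard computation. Your argument is correct and is exactly the expected one: invoke the density (co-Yoneda) presentation $F\cong\int^{A}y[A]\times FA$, identify $X\otimes y[A]\cong X^{A}$ via the left exactness of $f^{\ast}$ and the isomorphism $y[A]\cong\prod_{a\in A}y[1]$, push the cocontinuous functor $X\otimes{-}$ through the coend and the copowers, and finally unravel the $\Set$-coend as the stated coequalizer. There is nothing to compare against, and no gap in your reasoning.
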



\begin{notation}[Tensor notation]\label{NotationTensorNotationOfElement}
    For a set $X \in \ob(\Set)$, 
    the equivalence class of $(X\lar{f} A, \sigma \in FA)$ will be denoted by $f\otimes \sigma \in X \otimes F$.
    This notation enables us to manipulate elements of $X\otimes F$ like
\[
(f\circ g) \otimes \sigma = f\otimes (g\cdot \sigma).
\]
In the case where $A=[n]$ for a non-negative integer $n$, the element $f\otimes \sigma$ is also denoted by $(f(0), \dots ,f(n-1)) \otimes \sigma$.
\end{notation}

\begin{notation}\label{NotationOtimesSigmaMorphism}
    For a Grothendieck topos $\E$, an object $X\in \ob(\E)$, an object $F\in \ob(\OC)$, a non-negative integer $n$, and an element $\sigma \in F[n]$, the corresponding morphism in the topos $\E$
    \[
    \begin{tikzcd}
        X^n \ar[r,"\cong"] & X\otimes y[n]\ar[r,"X \otimes \overline{\sigma} "] &X\otimes F,
    \end{tikzcd}
    \]
    is denoted by ${-}\otimes \sigma$. Here, the morphism $\overline{\sigma}\colon y[n]\to F$ is the morphism corresponding to the element $\sigma \in F[n]$ by the Yoneda lemma. 
\end{notation}

We will summarize some properties of the coend that we will use later.

\begin{lemma}\label{LemmaCoyoneda}
    In the topos of sets $\E= \Set$, if $X$ is a finite set and $(X\lar{\id_{X}}X)\otimes (\sigma\in FX)= (X\lar{\id_{X}}X)\otimes (\tau\in FX)$, then $\sigma=\tau$.
\end{lemma}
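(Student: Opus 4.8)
The plan is to construct an explicit left inverse of the map $FX\to X\otimes F$ sending $\sigma$ to $(X\lar{\id_X}X)\otimes\sigma$, since the lemma is precisely the injectivity of this map. The finiteness of $X$ will be used only to ensure $X\in\ob(\FinSet)$, so that $F$ can be evaluated at $X$ itself and at morphisms with codomain $X$.

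First I would define a map $r\colon X\otimes F\to FX$ on representatives by
\[
r\bigl((X\lar{f}A)\otimes(\sigma\in FA)\bigr):=(Ff)(\sigma)\in FX,
\]
and check that it descends to equivalence classes. By the explicit description of $X\otimes F$ as a coend over $\FinSet$ given above, it is enough to verify compatibility with the single generating relation $(X\lar{f\circ g}B,\ \sigma\in FB)\sim(X\lar{f}A,\ g\cdot\sigma\in FA)$ for $g\colon B\to A$; this is just functoriality of $F$, since $F(f\circ g)(\sigma)=Ff(Fg(\sigma))=Ff(g\cdot\sigma)$.

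Then I would note that $r\bigl((X\lar{\id_X}X)\otimes\sigma\bigr)=(F\id_X)(\sigma)=\sigma$, so $r$ is a retraction of $\sigma\mapsto(X\lar{\id_X}X)\otimes\sigma$; hence this map is injective, which is exactly the statement $(X\lar{\id_X}X)\otimes\sigma=(X\lar{\id_X}X)\otimes\tau\Rightarrow\sigma=\tau$.

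I expect no genuine difficulty here; the only point needing (routine) care is the well-definedness of $r$ and the observation that finiteness of $X$ enters solely to make $FX$ and $Ff$ meaningful. Conceptually, $r$ is the co-Yoneda (\dq{ninja Yoneda}) isomorphism $X\otimes F=\int^{A\in\FinSet}\FinSet(A,X)\times FA\ \cong\ FX$, which holds precisely because $X$ is finite (so that $\Set(A,X)=\FinSet(A,X)$), and $\sigma\mapsto(X\lar{\id_X}X)\otimes\sigma$ is its inverse — but only the easy direction, that $r$ is a one-sided inverse, is required for the lemma.
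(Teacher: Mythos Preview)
Your proof is correct and is essentially the same as the paper's: the paper simply invokes the co-Yoneda isomorphism $FX\cong\int^{A\in\FinSet}\FinSet(A,X)\times FA$, and what you have written is precisely the explicit construction of (one half of) that isomorphism, using only the retraction direction needed for injectivity.
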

\begin{proof}
    This follows from the co-Yoneda lemma:
    \[
    FX \cong \int^{A\in \FinSet}\FinSet(A,X) \times FA.
    \]
\end{proof}


\begin{lemma}\label{LemmaNaturalityOfCoend}
    An object $F\in \OC$ defines a functor ${-}\otimes F \colon \E \to \E$. In the case where $\E=\Set$, an arbitrary function $g\colon X\to Y$ induces a well-defined function
    \[
    g\otimes F \colon X\otimes F \to Y\otimes F \colon f\otimes \sigma \mapsto (g\circ f) \otimes \sigma.
    \]
\end{lemma}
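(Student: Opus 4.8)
The plan is to read off both assertions directly from the coend presentation $X\otimes F\cong\int^{A\in\FinSet}X^A\times FA$ established in the Proposition above. For the first assertion, I would observe that the expression $X^A\times FA$ is the value at $(A,A)$ of the bifunctor $\E\times\FinSet^{\op}\times\FinSet\to\E$ sending $(X,A,B)$ to $X^A\times FB$: this is contravariant in $A$ through the power $X^{(-)}$, covariant in $B$ through $F$, and covariant in $X$ again through the power (a morphism $g\colon X\to Y$ in $\E$ induces $X^A\to Y^A$ componentwise, naturally in $A$). A morphism $g\colon X\to Y$ therefore induces a morphism of the diagrams $(A,B)\mapsto X^A\times FB$ and $(A,B)\mapsto Y^A\times FB$, and passing to coends (functoriality of the coend construction in the bifunctor) produces a morphism $g\otimes F\colon X\otimes F\to Y\otimes F$; the same functoriality shows identities and composites are preserved, so $-\otimes F\colon\E\to\E$ is a functor.

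For the second assertion I would specialize to $\E=\Set$, where the same Proposition describes $X\otimes F$ as the set of equivalence classes of pairs $(X\lar{f}A,\ \sigma\in FA)$ under the relation generated by $(f\circ h,\sigma)\sim(f,h\cdot\sigma)$. Define a map on representatives by $(f,\sigma)\mapsto(g\circ f,\sigma)$. To see it is well defined, apply it to the two sides of a generating relation: $(f\circ h,\sigma)$ goes to $(g\circ f\circ h,\sigma)$ and $(f,h\cdot\sigma)$ goes to $(g\circ f,h\cdot\sigma)$, and these two pairs are again related by a generating relation, now with $g\circ f$ in place of $f$. Hence the assignment descends to a function $g\otimes F\colon X\otimes F\to Y\otimes F$ with $f\otimes\sigma\mapsto(g\circ f)\otimes\sigma$. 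Finally I would note this agrees with the action of the functor from the first part: under the identification $X^A=\Set(A,X)$, the map $X^A\to Y^A$ induced by $g$ is post-composition with $g$, i.e. $f\mapsto g\circ f$, so the coend-level morphism $g\otimes F$ sends $f\otimes\sigma$ to $(g\circ f)\otimes\sigma$.

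There is no serious obstacle here: the lemma is a bookkeeping statement making explicit the functoriality already implicit in the coend formula. The only point requiring a little care is the compatibility between the abstract coend-level definition of $-\otimes F$ and the explicit formula on equivalence classes in the $\Set$ case, which is settled by unwinding how a morphism $g\colon X\to Y$ acts on each power $X^A$.
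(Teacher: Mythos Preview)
Your proof is correct. The paper states this lemma without proof, treating it as a routine bookkeeping observation immediate from the coend formula; your argument spells out exactly the functoriality-of-coends reasoning one would expect, and your check of well-definedness on the generating relation in the $\Set$ case is clean and accurate.
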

According to \cref{NotationTensorNotationOfElement}, if $A=[n]$, the above action of the function $g$ is also written as \[(x_0, \dots x_{n-1})\otimes \sigma \mapsto (g(x_0), \dots , g(x_{n-1}))\otimes \sigma.\]

\begin{proposition}\label{PropositionFunctorTensorDescription}
    For a small category $\C$, consider the covariant functor category $\Func{\C}{\Set}$. For a functor $\X \colon \C \to \Set$, and an object $F\in \OC$, the functor $\X \otimes F \colon \C \to \Set$ (in \cref{NotationTensorFunctor} where $\E=\Func{\C}{\Set}$) is given in the following way.
    \begin{itemize}
        \item For an object $c\in \ob(\C)$, $(\X\otimes F)(c) = \X(c) \otimes F$.
        \item For a morphism $f\colon c \to c' $ in the category $\C$, the action $(\X\otimes F)(c)\to (\X\otimes F)(c')$ is given by $\X(f)\otimes F \colon \X(c) \otimes F\to \X(c') \otimes F$.
    \end{itemize}
\end{proposition}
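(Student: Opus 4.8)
The plan is to reduce the statement to the standard fact that colimits in a functor category $\Func{\C}{\Set}$ are computed objectwise — where \dq{objectwise} means not only that the value at each $c\in\ob(\C)$ is the corresponding colimit in $\Set$, but also that the transition map along each morphism $f\colon c\to c'$ of $\C$ is the induced map between those colimits. Recall from the earlier proposition that $\X\otimes F$ is, as an object of $\Func{\C}{\Set}$, the coend $\int^{A\in\FinSet}\X^{A}\times FA$; this is a colimit of the diagram sending a finite set $A$ to the $FA$-fold copower of the finite power $\X^{A}$. (The power $\X^{A}$ is a finite limit rather than a colimit, but this causes no trouble: finite limits are also objectwise in $\Func{\C}{\Set}$, so every object appearing in the coend diagram is computed objectwise, and hence so is the coend itself.) Evaluating the coend at $c$ therefore gives $(\X\otimes F)(c)=\int^{A\in\FinSet}\X(c)^{A}\times FA$, which is by definition $\X(c)\otimes F$; this is the first bullet. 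For the transition map along $f\colon c\to c'$, observe that at the level of the coend diagram, $f$ induces on the $A$-th vertex the map $\X(f)^{A}\times\id_{FA}$, i.e. on coprojections it sends $(x_{a})_{a\in A}\otimes\sigma$ to $(\X(f)(x_{a}))_{a\in A}\otimes\sigma$; by the universal property of the coend the induced map $\X(c)\otimes F\to\X(c')\otimes F$ is the unique one compatible with all these coprojections, and comparing with \cref{LemmaNaturalityOfCoend} this is exactly $\X(f)\otimes F$. This is the second bullet.

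A slicker route to the first bullet, bypassing the coend bookkeeping, is the following. Write $\varphi\colon\Func{\C}{\Set}\to\OC$ for the geometric morphism with $\varphi^{\ast}\uniobj\cong\X$. Each evaluation functor $\mathrm{ev}_{c}\colon\Func{\C}{\Set}\to\Set$ preserves all limits and colimits, so the composite $\mathrm{ev}_{c}\circ\varphi^{\ast}\colon\OC\to\Set$ is left exact and cocontinuous, hence the inverse image of a geometric morphism $\Set\to\OC$; by \cref{FactEquivObjectClassifier} that morphism must be the tensor functor of the set $\mathrm{ev}_{c}(\varphi^{\ast}\uniobj)=\X(c)$, and evaluating at $F$ yields $(\X\otimes F)(c)=\X(c)\otimes F$. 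This argument does not by itself pin down the transition maps, so for the full statement I would still carry out the objectwise-colimit computation above.

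I do not expect a genuine obstacle here — the content of the proposition is essentially \dq{everything in sight is objectwise in a functor category.} The one point deserving a moment of care is the morphism level: one must check that the colimit-comparison map is literally the elementwise rule $(x_{a})_{a\in A}\otimes\sigma\mapsto(\X(f)(x_{a}))_{a\in A}\otimes\sigma$ of \cref{LemmaNaturalityOfCoend}, rather than merely \dq{some canonical natural map,} but this is immediate once the coprojections of the coend are written out explicitly.
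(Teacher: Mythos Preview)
The paper states this proposition without proof, treating it as a routine consequence of the coend formula and the fact that (finite) limits and colimits in $\Func{\C}{\Set}$ are computed objectwise. Your argument is correct and supplies exactly those details; both the coend route and the slicker argument via $\mathrm{ev}_{c}\circ\varphi^{\ast}$ are valid, and your care in checking the morphism level against \cref{LemmaNaturalityOfCoend} is appropriate.
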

\begin{proof}
    This follows since products and colimits in $\Func{\C}{\Set}$ are pointwise.
\end{proof}

\subsubsection{Length and minimal expression}


Throughout this subsubsection, let $X$ denote a set and $F$ denote an object of $\OC$.

\begin{definition}[Length and minimal expression]
    For each element $t \in X\otimes F$,
    \begin{itemize}
         \item an \demph{expression} of $t$ is a representative $(X \lar{f}A,\sigma\in FA)$, such that $f\otimes \sigma =t$.
        \item the \demph{length} of an expression $(X \lar{f}A,\sigma\in FA)$ is the cardinality of $A$.
        \item the \demph{length} $\Le(t)$ of the element $t$ is the minimum length of expression among all expressions of $t$.
        \item the \demph{minimal expression} of $t$ is an expression, that realizes the (minimum) length of $t$.
    \end{itemize}
\end{definition}

\begin{lemma}\label{LemmaMinimalInjectivity}
    If $(X\lar{f} A, \sigma \in FA)$ is a minimal expression, then $f$ is injective.
\end{lemma}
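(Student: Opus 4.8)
The plan is to argue by contrapositive: if $f$ is not injective, then the expression $(X\lar{f}A,\sigma\in FA)$ can be shortened, so it cannot be minimal. Suppose $a,a'\in A$ are distinct with $f(a)=f(a')$. Let $q\colon A\twoheadrightarrow A'$ be the quotient map of $A$ by the equivalence relation generated by $a\sim a'$, so that $\abs{A'}=\abs{A}-1<\abs{A}$. Since $f(a)=f(a')$, the function $f$ is constant on each $q$-fibre, hence factors as $f=f'\circ q$ for a (unique) function $f'\colon A'\to X$.

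Next I would apply the defining relation of the coend. Taking the generating relation
\[
(X\lar{h\circ g}D, \tau \in FD) \sim (X\lar{h} C, g\cdot  \tau \in FC)
\]
with $h=f'\colon A'\to X$, $g=q\colon A\to A'$, $D=A$, $C=A'$, and $\tau=\sigma\in FA$, we obtain
\[
(X\lar{f}A,\sigma) = (X\lar{f'\circ q}A,\sigma) \sim (X\lar{f'}A', q\cdot\sigma\in FA').
\]
Therefore $t=f\otimes\sigma=(f'\circ q)\otimes\sigma=f'\otimes(q\cdot\sigma)$ by \cref{NotationTensorNotationOfElement}, which exhibits $(X\lar{f'}A', q\cdot\sigma)$ as an expression of $t$ of length $\abs{A'}<\abs{A}$. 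This contradicts the minimality of $(X\lar{f}A,\sigma)$, and hence $f$ must be injective.

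The argument is essentially routine; the only point requiring care is bookkeeping about the direction of the generating relation (which map is precomposed and which side the functorial action $g\cdot(-)$ lands on), so that the shortened pair is genuinely an expression of the \emph{same} element $t$. No genuine obstacle is anticipated; the one-step quotient $q$ suffices, and iterating is unnecessary since producing a single strictly shorter expression already contradicts minimality.
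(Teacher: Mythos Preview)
Your proof is correct and follows essentially the same approach as the paper's: factor the non-injective $f$ through a strictly smaller finite set and use the coend relation to produce a shorter expression. The only cosmetic difference is that the paper factors $f$ through its image inclusion $\Image(f)\hookrightarrow X$ in one step, whereas you quotient by a single identification $a\sim a'$; either factorization immediately contradicts minimality.
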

\begin{proof}
    If $f$ is not injective, by replacing $f$ with its image inclusion, we obtain another expression with smaller length. That contradicts the minimality.
\end{proof}

\begin{lemma}\label{LemmaCancellationInjection}
    Suppose $(X\lar{f} A)\otimes( \sigma \in FA)=(X\lar{f} A)\otimes( \tau \in FA)$ , $f$ is injective and $A\neq \emptyset$. Then, $\sigma =\tau$. 
\end{lemma}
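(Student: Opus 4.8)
The plan is to exploit the functoriality of ${-}\otimes F$ together with the co-Yoneda lemma, reducing the claim to \cref{LemmaCoyoneda}. The key observation is that an injective function out of a \emph{nonempty} set admits a retraction, and this is the only place the hypothesis $A\neq\emptyset$ enters.

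First I would choose, using $A\neq\emptyset$, a fixed element $a_0\in A$ and define a function $r\colon X\to A$ by setting $r(x)=a$ whenever $x=f(a)$ for some $a\in A$ (such $a$ is unique since $f$ is injective) and $r(x)=a_0$ otherwise. By construction $r\circ f=\id_A$.

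Next I would apply the induced function $r\otimes F\colon X\otimes F\to A\otimes F$ of \cref{LemmaNaturalityOfCoend} to the assumed equality $(X\lar{f}A)\otimes\sigma=(X\lar{f}A)\otimes\tau$. Since $(r\otimes F)(f\otimes\sigma)=(r\circ f)\otimes\sigma=\id_A\otimes\sigma$ and likewise $(r\otimes F)(f\otimes\tau)=\id_A\otimes\tau$, we obtain $(A\lar{\id_A}A)\otimes\sigma=(A\lar{\id_A}A)\otimes\tau$ in $A\otimes F$. As $A$ is a finite set, \cref{LemmaCoyoneda} applies verbatim (with $A$ playing the role of its $X$) and gives $\sigma=\tau$, as desired.

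I do not expect a real obstacle in this argument; the only point worth flagging is that the hypothesis $A\neq\emptyset$ is genuinely necessary — it is exactly what makes the retraction $r$ exist — and without it the statement fails, e.g.\ for a constant functor $F$ the empty expression cannot detect the value of $\sigma\in F\emptyset$.
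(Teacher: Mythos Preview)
Your proof is correct and follows essentially the same route as the paper: construct a retraction $r$ of the injection $f$ (which exists precisely because $A\neq\emptyset$), apply $r\otimes F$ via \cref{LemmaNaturalityOfCoend}, and conclude with \cref{LemmaCoyoneda}. The only difference is that you spell out the construction of $r$ explicitly, whereas the paper simply asserts its existence.
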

\begin{proof}
    Let $r\colon X\twoheadrightarrow A$ be a retraction of $f$. Applying $r\otimes F$ (\cref{LemmaNaturalityOfCoend}), we have 
    \[
    (A\lar{\id_{A}}A)\otimes (\sigma \in FA)=(A\lar{r}X\lar{f}A)\otimes (\sigma\in FA) = (A\lar{r}X\lar{f}A)\otimes (\tau\in FA)=(A\lar{\id_{A}}A)\otimes (\tau\in FA).
    \]
    Due to \cref{LemmaCoyoneda}, we obtain $\sigma = \tau$.
\end{proof}
\begin{lemma}[Comparison lemma, when $\Le(t)>1$]\label{LemmaWeakUniquenessMinimalExpression}
    Suppose $(X\lar{f} A, \sigma \in FA)$ is a minimal expression of an element $t\in X\otimes F$, and its length is greater than $1$. Then, for any other expression $(X\lar{g} B, \tau \in FB)$, we have $\Image(f) \subset \Image(g)$. 
\end{lemma}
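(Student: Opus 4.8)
The plan is to argue by contradiction, using the minimality of $(X\lar{f}A,\sigma\in FA)$ together with a carefully chosen endomorphism of the set $X$ that leaves the expression $(X\lar{g}B,\tau\in FB)$ untouched but shortens the expression $(X\lar{f}A,\sigma\in FA)$.

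First I would record, via \cref{LemmaMinimalInjectivity}, that $f$ is injective. Suppose now that $\Image(f)\not\subseteq\Image(g)$, and pick $a_0\in A$ with $f(a_0)\notin\Image(g)$. Since $\Le(t)=\abs{A}>1$, there is a second element $a_1\in A\setminus\{a_0\}$. Define $r\colon X\to X$ to fix every point of $X$ except $f(a_0)$, which it sends to $f(a_1)$; this is well-defined because $f(a_0)$ is a single element of $X$. As $f(a_0)\notin\Image(g)$, the map $r$ restricts to the identity on $\Image(g)$, so $r\circ g=g$ (this works uniformly, including when $B=\emptyset$). Applying the induced function $r\otimes F$ of \cref{LemmaNaturalityOfCoend} to the equality $f\otimes\sigma=t=g\otimes\tau$ gives $(r\circ f)\otimes\sigma=(r\circ g)\otimes\tau=g\otimes\tau=t$, so $(X\lar{r\circ f}A,\sigma)$ is another expression of $t$.

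Finally I would compute $\Image(r\circ f)=f(A\setminus\{a_0\})$: for $a\neq a_0$ injectivity of $f$ gives $f(a)\neq f(a_0)$, hence $(r\circ f)(a)=f(a)$, while $(r\circ f)(a_0)=f(a_1)$ lies among these values since $a_1\neq a_0$. By injectivity of $f$ this image has exactly $\abs{A}-1$ elements, so corestricting $r\circ f$ to its image (as in the proof of \cref{LemmaMinimalInjectivity}) produces an expression of $t$ of length $\abs{A}-1<\Le(t)$, contradicting minimality. Hence $\Image(f)\subseteq\Image(g)$. The only delicate point — and precisely where the hypothesis $\Le(t)>1$ enters — is the existence of the auxiliary element $a_1$ onto which $f(a_0)$ can be collapsed; when $\Le(t)\le 1$ there is no such room, which is why that case is excluded and treated separately. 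I do not expect any other obstacle.
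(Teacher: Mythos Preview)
Your proof is correct and follows essentially the same route as the paper: the same collapsing map $r$ on $X$ (the paper calls it $h$), the same observation $r\circ g=g$, and the same application of \cref{LemmaNaturalityOfCoend}. The only cosmetic difference is in the final step: the paper notes that $(r\circ f,\sigma)$ is still a length-$\abs{A}$ expression with $r\circ f$ non-injective and invokes \cref{LemmaMinimalInjectivity}, whereas you unpack that lemma by corestricting to the image to get a strictly shorter expression directly.
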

\begin{proof}
    Suppose $\Image(f) \not \subset \Image(g)$ and take an element $a\in A$ such that $f(a) \notin \Image(g)$. Since $\#A > 1$, we can take another element $a' \in A\setminus\{a\}$. Let $h\colon X \to X$ be a function defined by 
    \[
    h(x)=
    \begin{cases}
        f(a') & (x=f(a))\\
        x & (\text{otherwise}).
    \end{cases}
    \]
    Then, the composite of $X\lar{h} X \lar{f} A$ is not injective, since  $h(f(a))=f(a') = h(f(a'))$.
    The composite of $X\lar{h} X \lar{g} B$ is equal to $g$. Applying $h\otimes F$ (\cref{LemmaNaturalityOfCoend}) the equation $f\otimes \sigma = g\otimes \tau$ implies the equation $(h\circ f)\otimes \sigma = (h\circ g)\otimes \tau$. Then, we have 
    \[
    (h\circ f)\otimes \sigma = (h\circ g)\otimes \tau = g\otimes \tau = f\otimes \sigma.
    \]
    Since the expression $(h\circ f, \sigma)$ has the same length as the minimal expression $(f,\sigma)$, it is also minimal. However,
    since $h\circ f$ is not injective,
    this contradicts 
    \cref{LemmaMinimalInjectivity}.
\end{proof}

\begin{proposition}[Uniqueness of minimal expression, when $\Le(t)>1$]\label{PropositionWeakUniquenessMinimalExpression}
    Suppose $(X\lar{f} A, \sigma \in FA)$ and $(X\lar{g} B, \tau \in FB)$ are minimal expressions of $t\in X \otimes F$ and their lengths are greater than $1$. Then there exists a bijection $\phi\colon A \to B$ such that $f=g\circ \phi$ and $\phi \cdot \sigma = \tau$.
    \[
    \begin{tikzcd}[row sep = 5pt]
        &A\ar[dd,"\phi", "{\rotatebox{90}{$\simeq$}}"']\ar[ld,"f"']&FA\ni \sigma \ar[dd,"F\phi", "{\rotatebox{90}{$\simeq$}}"']\\
        X&&\\
        &B\ar[lu,"g"]&FB\ni \tau
    \end{tikzcd}
    \]
\end{proposition}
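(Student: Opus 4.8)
The plan is to first pin down the bijection $\phi$ at the level of the index sets using the comparison lemma applied symmetrically, and then upgrade it to an identification of the elements $\sigma$ and $\tau$ via the cancellation lemma.

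First I would apply \cref{LemmaWeakUniquenessMinimalExpression} twice. Since $(X\lar{f}A,\sigma)$ is a minimal expression of length $>1$ and $(X\lar{g}B,\tau)$ is another expression of $t$, the comparison lemma gives $\Image(f)\subset\Image(g)$. Symmetrically, since $(X\lar{g}B,\tau)$ is itself a minimal expression of length $>1$ and $(X\lar{f}A,\sigma)$ is another expression of $t$, we get $\Image(g)\subset\Image(f)$. Hence $\Image(f)=\Image(g)=:I\subset X$. By \cref{LemmaMinimalInjectivity} both $f$ and $g$ are injective, so they corestrict to bijections $\bar f\colon A\xrightarrow{\sim} I$ and $\bar g\colon B\xrightarrow{\sim} I$. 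Setting $\phi:=\bar g^{-1}\circ\bar f\colon A\to B$ yields a bijection with $g\circ\phi=f$.

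Next I would check the compatibility $\phi\cdot\sigma=\tau$. Using the tensor manipulation from \cref{NotationTensorNotationOfElement} (i.e.\ the generating relation of the coend), the identity $g\circ\phi=f$ gives
\[
g\otimes(\phi\cdot\sigma)=(g\circ\phi)\otimes\sigma=f\otimes\sigma=t=g\otimes\tau.
\]
Thus $(X\lar{g}B)\otimes(\phi\cdot\sigma\in FB)=(X\lar{g}B)\otimes(\tau\in FB)$. Since $g$ is injective and $B\neq\emptyset$ (because $\#B=\Le(t)>1$), \cref{LemmaCancellationInjection} applies and yields $\phi\cdot\sigma=\tau$, completing the proof.

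I do not expect a genuine obstacle here: the argument is essentially bookkeeping once \cref{LemmaWeakUniquenessMinimalExpression,LemmaMinimalInjectivity,LemmaCancellationInjection} are in hand. The only point requiring a little care is orienting the covariant action correctly, namely verifying that $(g\circ\phi)\otimes\sigma=g\otimes(\phi\cdot\sigma)$ matches the generating relation $(X\lar{f\circ g}B,\sigma)\sim(X\lar{f}A,g\cdot\sigma)$ with the roles of the maps substituted appropriately, and ensuring the nonemptiness hypothesis of \cref{LemmaCancellationInjection} is met, which is exactly where the assumption $\Le(t)>1$ is used.
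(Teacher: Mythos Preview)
Your proof is correct and follows essentially the same route as the paper: obtain $\Image(f)=\Image(g)$ from \cref{LemmaWeakUniquenessMinimalExpression} together with the injectivity from \cref{LemmaMinimalInjectivity}, define $\phi$ accordingly, and then invoke \cref{LemmaCancellationInjection} on $g\otimes(\phi\cdot\sigma)=g\otimes\tau$ to conclude $\phi\cdot\sigma=\tau$. The paper's write-up is terser, but there is no substantive difference.
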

\begin{proof}
    By \cref{LemmaMinimalInjectivity} and \cref{LemmaWeakUniquenessMinimalExpression}, there exists a unique bijection $\phi \colon A \to B$ such that $f=g\circ \phi$. 
    By definition, $g\otimes \tau=f\otimes \sigma=(g\circ \phi)\otimes \sigma =g\otimes(\phi \cdot \sigma) $. As $B$ is non-empty and $g$ is injective, \cref{LemmaCancellationInjection} completes the proof.
    \memo{iine!}
\end{proof}

\begin{example}[Model case 1: $F= \FinSet(S,-)$]
    If the functor $F$ is representable by a finite set $S$, every expression $(X\lar{f}A,A\lar{g}S)$ can be reduced to the form of
    \[
    (X\lar{f}A)\otimes (A\lar{g}S)= (X\lar{f\circ g}S)\otimes (S\lar{\id_{S}}S) =(X\lar{}\Image(f\circ g))\otimes (\Image(f\circ g)\lar{}S).
    \]
    By a straightforward argument, we can conclude that the expression $(X\lar{}\Image(f\circ g), \Image(f\circ g)\lar{}S)$ is the (essentially unique) minimal expression, and its length is $\# \Image(f\circ g)$. 
\end{example}

\begin{example}[Model case 2: $F= \P$ covariant power set functor]
    If the functor $F$ is the covariant power set functor $\P$, every expression $(X\lar{f}A,S\subset A)$ can be reduced to the form of
    \[
    (X\lar{f}A)\otimes (S\subset A) = (X\lar{f\restriction_{S}}S)\otimes (S\subset S) = (X\lar{}f(S))\otimes (f(S)\subset f(S)). 
    \]
    By a straightforward argument, we can conclude that the expression $(X\lar{}f(S), f(S)\subset f(S))$ is the (essentially unique) minimal expression, and its length is $\# f(S)$.  If the set $X$ is an infinite set, the length of elements of $X\otimes F$ is not bounded.
\end{example}

\begin{example}[Exceptional case, when $\Le(t)=1$]\label{ExampleINE}
    Let $F$ be a functor $F\colon \FinSet \to \Set$ that sends a finite set $A$ to
    \[
    FA \coloneqq 
    \begin{cases}
        \emptyset & (A\text{ is empty})\\
        \{\ast\} & (A\text{ is non-empty}).
    \end{cases}
    \]
    If $X$ is a non-empty and finite set, the set $X\otimes F$ is also non-empty, since $(X\lar{\id}X)\otimes(\ast\in FX) \in X\otimes F$. \memo{X: finite tsuiki si ta}
    Take an arbitrary element $t\in X\otimes F$ and an arbitrary expression $(X\lar{f}A,\ast\in FA)$. 
For any $x\in X$, we have
    \[
    t=(X\lar{f}A)\otimes (\ast\in FA) = (X\lar{\id}X)\otimes (\ast\in FX) = (X\lar{x}[1])\otimes (\ast\in F[1]).
    \]
    This implies that the length $\Le(t)$ is at most $1$. Since $F[0] = \emptyset$, the length $\Le(t)$ is actually $1$. If $\# X>1$, this example shows that the minimal expression may not be unique when $\Le(t)=1$.
\end{example}

\begin{example}[Exceptional case, when $\Le(t)=0$] \label{ExampleINE_2}
    Let $F$ be a functor $F\colon \FinSet \to \Set$ that sends a finite set $A$ to
    \[
    FA \coloneqq 
    \begin{cases}
        \{a,b\} & (A\text{ is empty})\\
        \{\ast\} & (A\text{ is non-empty}),
    \end{cases}
    \]
    where the two elements $a,b$ are distinct. 
For a non-empty set $X$ and an element $x\in X$, the element $t\coloneqq (X\lar{x}1)\otimes (\ast\in F[1]) \in X\otimes F$ has the following two minimal expressions:
    \[
    (X\lar{}[0])\otimes (a\in F[0]) = (X\lar{x}1)\otimes (\ast\in F[1]) =  (X\lar{}[0])\otimes (b\in F[0]). 
    \]
    This example shows that the minimal expression may not be unique when $\Le(t) =0$.
\end{example}

\subsubsection{The case of presheaves}

Utilizing \cref{PropositionFunctorTensorDescription}, for a functor $\X \colon \C \to \Set$ and an object $F\in \OC$, the length and minimal expression of an element $t \in (\X\otimes F)(c)$, where $c\in \ob(\C)$, are also defined as
those
in  $\X(c)\otimes F$.

\begin{lemma}\label{LemmaPresfeafDecreasing}
    For a functor $\X\colon \C \to \Set$ from a small category $\C$, and an object $F\in \OC$, the action of a morphism $g\colon c \to c'$ in $\C$ does not increase the length of an element $t \in \X(c)\otimes F$\textup{:}
    \[
    \Le(t)\geq \Le(g\cdot t).
    \]
\end{lemma}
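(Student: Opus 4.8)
The plan is to unwind the definitions and chase the element $t$ through an expression. Suppose $(X(c) \lar{f} A, \sigma \in FA)$ is a minimal expression of $t$, so $\Le(t) = \#A$ and $t = f \otimes \sigma$ in $X(c) \otimes F$. By \cref{PropositionFunctorTensorDescription}, the action of $g \colon c \to c'$ on $\X \otimes F$ at $c$ is precisely $\X(g) \otimes F \colon \X(c) \otimes F \to \X(c') \otimes F$, and by \cref{LemmaNaturalityOfCoend} this sends $f \otimes \sigma$ to $(\X(g) \circ f) \otimes \sigma$. Thus $g \cdot t = (\X(c') \lar{\X(g) \circ f} A) \otimes (\sigma \in FA)$ is an expression of $g \cdot t$ with domain $A$.

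First I would record that this exhibited expression of $g \cdot t$ has length exactly $\#A = \Le(t)$. By the definition of length as the minimum over all expressions, $\Le(g \cdot t) \leq \#A = \Le(t)$, which is exactly the claimed inequality. That is the whole argument: functoriality of $-\otimes F$ lets a minimal expression upstairs be pushed forward (without enlarging its domain) to an expression downstairs, and the minimum can only drop.

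There is essentially no obstacle here — the only thing to be slightly careful about is that a minimal expression of $t$ genuinely exists (the set of lengths of expressions of $t$ is a nonempty set of cardinals, hence has a minimum), and that \cref{PropositionFunctorTensorDescription} is being applied with the correct variance so that $g$ acts by postcomposition $f \mapsto \X(g) \circ f$ rather than in some other way. Both are immediate from the material already set up. I would therefore write the proof in two or three sentences, citing \cref{PropositionFunctorTensorDescription} and \cref{LemmaNaturalityOfCoend}, and noting that equality can fail in general (the image of $\X(g) \circ f$ may be smaller than that of $f$, permitting a shorter expression downstairs, as in the power-set example).
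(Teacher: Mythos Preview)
Your proposal is correct and essentially identical to the paper's proof: both take a minimal expression $(\X(c)\lar{f}A,\sigma)$ of $t$, push it forward via $\X(g)$ using \cref{PropositionFunctorTensorDescription} and \cref{LemmaNaturalityOfCoend} to obtain the expression $(\X(c')\lar{\X(g)\circ f}A,\sigma)$ of $g\cdot t$, and conclude $\Le(g\cdot t)\leq \#A=\Le(t)$. Your additional remarks on existence of minimal expressions and possible strict inequality are fine but not needed for the lemma as stated.
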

\begin{proof}
    Let $(\X(c)\lar{f} A)\otimes (\sigma\in FA)$ be a minimal expression of the element $t$. Then, the element $g\cdot t \in \X(c')\otimes F$
    is equal to 
    \[
    g\cdot t=
    (\X(g) \otimes F)(t) = (\X(c') \lar{\X(g)} \X(c) \lar{f} A)\otimes \sigma.
    \]
    This proves that the length of 
    $g\cdot t \in \X(c')\otimes F$ is equal to or less than $\# A =\Le(t)$.
\end{proof}

\subsection{Length and minimal expression in the classifying topos of the theory of inhabited objects}

\subsubsection{Definition and basic properties}

As seen in \cref{PropositionWeakUniquenessMinimalExpression,ExampleINE,ExampleINE_2}, there are exceptional behaviors around the length $=0,1$. To avoid these exceptional cases, we will not consider the whole of the topos $\OC$, but its subtopos $\iOC$.

    \begin{notation}
    We adopt the following terminology:
    \begin{itemize}
        \item The category of non-empty finite sets is denoted by $\neFinSet$.
        \item We write $\iOC$ for the functor category $\iOC\coloneqq \Func{\neFinSet}{\Set}$, which is known to be \demph{the classifying topos of the theory of inhabited objects}\footnote{The notation $\iOC$ is also borrowed from \cite{anel2021topo}.}. (See \Cref{TheoremInhabitedObjectClassifier} and \Cref{rem:AsClassifyingTopos} for its universality.)
        \item The \demph{universal inhabited object} $\uniinhobj$ of the topos $\iOC$  is the canonical embedding functor $\neFinSet\to \Set$.
    \end{itemize}
    \end{notation}

    We will first observe how the topos $\iOC$ is embedded as a subtopos into the topos $\OC$.

    According to \cite{Maclane1998CWMcategories}, a functor $F\colon \C\to \D$ is called \demph{initial} if for any object $d \in \ob(\D)$, the comma category $\F\downarrow d$ is (non-empty and) connected.
    \begin{lemma}\label{Lemmainitiality}
        We define a functor from (the free category over) the parallel edge graph
        \[
        \begin{tikzcd}
            1\ar[r,shift left]\ar[r,shift right]&2
        \end{tikzcd}
        \]
        to the category $\neFinSet$ by sending objects $1$ (resp. $2$) to an $1$-element (resp. $2$-element) set and edges to two distinct functions.
        This functor is initial.
    \end{lemma}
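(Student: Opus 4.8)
The plan is to unwind the definition of \emph{initial functor} and to make the comma category $F\downarrow d$ explicit. Write $\C$ for the free category over the parallel edge graph; it has exactly four morphisms, namely the two identities together with the two generating arrows $u,v\colon 1\to 2$. We may choose the representatives $F(1)=[1]$ and $F(2)=[2]$, so that $F(u),F(v)\colon[1]\to[2]$ are the two distinct functions, say $F(u)(0)=0$ and $F(v)(0)=1$. For a non-empty finite set $d$, an object of $F\downarrow d$ is an object of $\C$ equipped with a map from its $F$-image into $d$: over $1$ this is a map $[1]\to d$, i.e.\ an element $x\in d$; over $2$ it is a map $[2]\to d$, i.e.\ an ordered pair $(x,y)\in d\times d$. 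A morphism of $F\downarrow d$ lying over $u$ runs from an element $x$ to a pair $(x,y)$ with $y$ arbitrary, and a morphism lying over $v$ runs from $x$ to a pair $(y,x)$ with $y$ arbitrary.

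First I would check non-emptiness: since $d$ is inhabited, any $x\in d$ yields an object over $1$, so $F\downarrow d\neq\emptyset$. This is the only step that uses the hypothesis that the objects of $\neFinSet$ are non-empty.

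Then for connectedness I would exhibit a zig-zag in the underlying graph of $F\downarrow d$ between any two objects. Given two elements $x,x'\in d$, the pair $(x,x')$ receives a morphism from $x$ over $u$ and a morphism from $x'$ over $v$, which gives the zig-zag $x\to(x,x')\leftarrow x'$; hence all objects over $1$ lie in a single connected component. An arbitrary object $(y,z)$ over $2$ receives a morphism from the element $y$ over $u$, so it lies in that same component. This shows $F\downarrow d$ is connected, and therefore $F$ is initial.

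I do not expect any serious obstacle; the only things to watch are the bookkeeping of the directions of $u$ and $v$, and the observation that the zig-zag between two distinct elements of $d$ genuinely needs both generating arrows — with only $u$ one reaches from $x$ just the pairs whose first coordinate is $x$, which would leave $F\downarrow d$ disconnected as soon as $\#d\geq 2$.
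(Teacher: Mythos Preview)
Your argument is correct: you have identified the objects and morphisms of $F\downarrow d$ accurately, the non-emptiness uses exactly the hypothesis that $d$ is inhabited, and the zig-zag $x\to(x,x')\leftarrow x'$ together with $y\to(y,z)$ shows connectedness. The paper itself states this lemma without proof, so there is no approach to compare against; your write-up is a clean verification of what the authors left implicit.
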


    \begin{proposition}[$\iOC$ as a subtopos of $\OC$]\label{PropositioniOCinOC}
        The canonical embedding functor $\neFinSet\to \FinSet$ induces a geometric embedding $\iota \colon \iOC \hookrightarrow \OC$. The left adjoint $\iota^{\ast}$ is the restriction along the embedding $\neFinSet\to \FinSet$. The right adjoint $\iota_{\ast}$ extends a functor $F\colon \neFinSet \to \Set$ to the functor $\iota_{\ast} F \colon \FinSet\to \Set$ by defining $(\iota_{\ast} F)[0]$ to be the equalizer of
        \[
        \iota_{\ast} F[0] \rightarrowtail F[1]\rightrightarrows F[2].
        \]
    \end{proposition}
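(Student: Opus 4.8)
The plan is to verify that $\iota^\ast \dashv \iota_\ast$ as described, that $\iota_\ast$ is well-defined and fully faithful, and that this makes $\iota$ a geometric embedding. The key observation powering everything is \cref{Lemmainitiality}: the parallel-pair functor from the free category on $1 \rightrightarrows 2$ into $\neFinSet$ is initial, so limits indexed by $\neFinSet$ can be computed as limits over this parallel pair, i.e.\ as equalizers of the shape $F[1] \rightrightarrows F[2]$.

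First I would set up the restriction functor. The inclusion $j\colon \neFinSet \hookrightarrow \FinSet$ induces $j^\ast = \iota^\ast\colon [\FinSet,\Set] \to [\neFinSet,\Set]$ by precomposition; this is the restriction functor, and it is the inverse image part of the geometric morphism, so it must be shown to preserve finite limits (obvious, computed pointwise) and have a right adjoint. The right adjoint is the right Kan extension $\mathrm{Ran}_j$ along $j$. I would compute this pointwise: $(\mathrm{Ran}_j F)(B) = \lim_{(A \in \neFinSet,\, B \to jA)} F(A)$, the limit over the comma category $B \downarrow j$. For $B$ non-empty this comma category has an initial object $(B, \id_B)$, so the limit is just $F(B)$, confirming that $\iota_\ast F$ genuinely extends $F$. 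For $B = [0]$, the comma category $[0] \downarrow j$ has objects all non-empty finite sets $A$ (with the unique map $[0] \to A$), and by \cref{Lemmainitiality} the limit over it agrees with the limit over the parallel pair $[1] \rightrightarrows [2]$, which is exactly the equalizer $\iota_\ast F[0] \rightarrowtail F[1] \rightrightarrows F[2]$ claimed in the statement. (One should check that the two maps $F[1]\rightrightarrows F[2]$ are $F$ applied to the two coprojections, and that the functoriality of $\iota_\ast F$ in the maps out of $[0]$ is the evident one; this is routine diagram-chasing.)

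Next I would argue that $\iota$ is a geometric \emph{embedding}, i.e.\ that $\iota_\ast$ is fully faithful, equivalently that the counit $\iota^\ast \iota_\ast \Rightarrow \id$ is an isomorphism. But $\iota^\ast \iota_\ast F$ is the restriction of $\mathrm{Ran}_j F$ back to $\neFinSet$, and we just saw $(\mathrm{Ran}_j F)(B) = F(B)$ naturally for all non-empty $B$ because $j$ is fully faithful (so each relevant comma category has an initial object). Hence the counit is an isomorphism and $\iota$ is an inclusion of topoi. Since both categories are Grothendieck topoi and $\iota^\ast$ preserves finite limits and has a right adjoint, $(\iota^\ast \dashv \iota_\ast)$ is a geometric morphism, and by the above it is an embedding.

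The only real content — and the step I expect to be the main obstacle to write cleanly — is the identification of the value $(\iota_\ast F)[0]$ with the displayed equalizer: one must justify that the limit defining the right Kan extension at the empty set, an a priori large-looking cofiltered-ish limit over all of $\neFinSet$, collapses to the finite equalizer over $[1] \rightrightarrows [2]$. This is precisely what \cref{Lemmainitiality} delivers: an initial functor $K\colon \mathcal{D}' \to \mathcal{D}$ induces, for any $F\colon \mathcal{D}\to\Set$, an isomorphism $\lim_{\mathcal{D}} F \cong \lim_{\mathcal{D}'} (F\circ K)$, and here $\mathcal{D} = [0]\downarrow j \simeq \neFinSet$ (since $[0]$ is initial in $\FinSet$, the comma category is isomorphic to $\neFinSet$ itself) while $\mathcal{D}'$ is the free category on $1\rightrightarrows 2$. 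Everything else is bookkeeping about Kan extensions and the pointwise formula.
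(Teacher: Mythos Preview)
Your proposal is correct and follows essentially the same approach as the paper's proof: invoke the pointwise right Kan extension formula, observe that for non-empty $B$ the comma category has an initial object so $(\iota_\ast F)(B)\cong F(B)$, and for $B=[0]$ use \cref{Lemmainitiality} to collapse the limit over $\neFinSet$ to the displayed equalizer. The paper is terser (it simply cites the general fact that a fully faithful functor between index categories induces a geometric embedding of presheaf topoi, and then does the Kan-extension computation in one line), but the substance is identical.
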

    \begin{proof}
        The fact that the fully faithful functor induces a geometric embedding is a famous fact. See \cite{maclane1994sheaves} for example. The right adjoint $\iota_{\ast}$ is calculated by the pointwise right Kan extension formula. Due to the formula, we have $(\iota_{\ast}F)A$ is $FA$ if $A\neq [0]$, and $\iota_{\ast}F[0]$ is the limit $\lim_{A\in \FinSet}FA$. By \cref{Lemmainitiality}, the calculation of the limit is reduced to the equalizer.
    \end{proof}


    \begin{definition}
    An object $X$ of a topos $\E$ is \demph{inhabited} (or internally inhabited) if the unique morphism to the terminal object $1$
 \[
 X \to 1
 \]
  is an epimorphism.
\end{definition}

 \begin{example}[Inhabited object in a presheaf topos]
        For a small category $\C$, a presheaf $F\colon \C^{\op}\to \Set$ is inhabited if and only if each $Fc$ is non-empty for every $c\in \ob(\C)$.
    \end{example}

\begin{remark}\label{rem:AsClassifyingTopos}
    This terminology is based on the fact that an object $X$ is inhabited if and only if the formula $\exists x \in X, \top$ holds internally. (Note that this is not equivalent to the non-emptiness condition $\neg\neg(\exists x \in X, \top)$.)  
    
    The fact that the classifying topos $\iOC$ is a subtopos of the classifying topos $\OC$ (\Cref{PropositioniOCinOC}) reflects that the theory of inhabited objects is obtained by adding the axiom $\exists x \in X, \top$ to the theory of objects (cf. \cite[Theorem 3.2.5]{caramello2018theories}). Indeed, this embedding $\iOC \hookrightarrow \OC$ is of the form of \cite[Theorem 1.1]{beke2004theories} or \cite[Theorem 8.2.10]{caramello2018theories}.
\end{remark}

    \begin{theorem}\label{TheoremInhabitedObjectClassifier}
        For any Grothendieck topos $\F$, the functor
        \[
            \Geom(\F, \iOC) \to \F \colon (f_{\ast}, f^{\ast}) \mapsto f^{\ast}(\uniinhobj)
        \]
        defines a fully faithful functor. Furthermore, this functor provides an equivalence of categories between $\Geom(\F, \iOC)$ and the full subcategory of $\F$ consisting of all inhabited objects of $\F$
        \[
        \Geom(\F, \iOC) \simeq \F_{\text{inhabited}},
        \]
        which restricts the equivalence of \cref{FactEquivObjectClassifier}
        \[
        \begin{tikzcd}
            \Geom(F,\iOC)\ar[r,"\simeq"]\ar[d,"{\iota}\circ {-}",hook]& \F_{\text{inhabited}}\ar[d,hook]\\
            \Geom(F,\OC)\ar[r,"\simeq"]& \F.
        \end{tikzcd}
        \]
    \end{theorem}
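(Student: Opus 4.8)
The plan is to deduce this theorem from the already-established object-classifier equivalence (\cref{FactEquivObjectClassifier}) together with the subtopos description of $\iota\colon\iOC\hookrightarrow\OC$ from \cref{PropositioniOCinOC}. The first step is to recall that a geometric embedding $\iota$ induces, for any topos $\F$, a fully faithful functor $\iota\circ{-}\colon \Geom(\F,\iOC)\to\Geom(\F,\OC)$, whose essential image consists of those geometric morphisms $\F\to\OC$ that factor through $\iota$ (equivalently, those $g$ such that the canonical comparison $g^{\ast}\Rightarrow g^{\ast}\iota^{\ast}\iota_{\ast}$, or rather the unit-type map, is an isomorphism). This is standard for localic/geometric reflections; I would cite \cite{maclane1994sheaves} and spell out only that fully faithfulness transports across the equivalence of \cref{FactEquivObjectClassifier}. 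Composing with that equivalence on both rows gives the claimed commuting square and reduces the theorem to identifying, inside $\F$, the full subcategory corresponding under $X\mapsto f_X$ to the geometric morphisms factoring through $\iota$.

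Next I would make that identification explicit. Given $X\in\ob(\F)$ with classifying morphism $f_X\colon\F\to\OC$, so $f_X^{\ast}(\uniobj)\cong X$ and $f_X^{\ast}=X\otimes{-}$ in the notation of \cref{NotationTensorFunctor}, the morphism $f_X$ factors through the subtopos $\iOC$ precisely when $f_X^{\ast}$ inverts the class of morphisms that $\iota^{\ast}$ inverts — equivalently, when $f_X$ factors through $\iOC$ as a geometric morphism, which by the theory of subtoposes happens iff $f_X$ "lies in" the subtopos, detected on a single test map. Concretely, $\iota^{\ast}$ sends the map $\uniobj^{0}=1\to \mathscr{N}$... more simply: the subtopos $\iOC$ is the one where the universal object is forced to be inhabited. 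I would pin this down by computing $f_X^{\ast}$ applied to the (finite-set-indexed) diagram whose limit defines $\iota_{\ast}F[0]$, and show that $f_X$ factors through $\iota$ iff the canonical map from $X\otimes\uniobj^{0}$... — cleaner: by \cref{PropositioniOCinOC} the Lawvere–Tierney topology cutting out $\iOC$ from $\OC$ is generated by forcing the subobject $\mathrm{Im}(\uniobj^{[1]}\to 1)\hookrightarrow 1$... Let me instead argue directly: $f_X$ factors through $\iOC$ iff for every object $F\in\OC$ the counit-comparison $f_X^{\ast}F\to f_X^{\ast}\iota_{\ast}\iota^{\ast}F$ is iso, and testing on $F=y[0]$ (the representable at the empty set) this map is exactly $X^{[0]}=1 \to X\otimes\iota_{\ast}\iota^{\ast}y[0]$, and by the equalizer formula of \cref{PropositioniOCinOC} together with the coend description this is an isomorphism iff $X\to 1$ is an epimorphism, i.e. iff $X$ is inhabited. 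The remaining cases $F=y[n]$, $n\geq 1$, are automatic since $\iota_{\ast}\iota^{\ast}$ is the identity on non-empty representables, and general $F$ follows since $f_X^{\ast}$ and $\iota_{\ast}\iota^{\ast}$ both preserve colimits over $n\geq 0$... at which point I would note one must be slightly careful because $\iota_{\ast}$ is not cocontinuous; so I would actually phrase the criterion via the topology $j$ and the single generating $j$-dense inclusion, and check $f_X^{\ast}$ inverts it iff $X$ is inhabited.

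Finally, I would assemble the pieces: fully faithfulness is inherited from $\Geom(\F,\OC)\simeq\F$ because $\iota\circ{-}$ is fully faithful and the horizontal equivalences are compatible with composition; the essential image is, by the previous paragraph, exactly $\F_{\text{inhabited}}$; and the square commutes by construction since $(\iota f)^{\ast}(\uniobj)=f^{\ast}\iota^{\ast}(\uniobj)=f^{\ast}(\uniinhobj)$, using $\iota^{\ast}(\uniobj)=\uniinhobj$ (the restriction of the inclusion $\FinSet\hookrightarrow\Set$ along $\neFinSet\hookrightarrow\FinSet$ is the inclusion $\neFinSet\hookrightarrow\Set$). I expect the main obstacle to be the middle step: cleanly proving that "$f_X$ factors through the subtopos $\iOC$" is equivalent to "$X$ is inhabited", i.e. matching the Lawvere–Tierney topology that defines $\iOC\hookrightarrow\OC$ with the inhabitedness condition $X\twoheadrightarrow 1$, without getting tangled in the fact that $\iota_{\ast}$ fails to be cocontinuous. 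The safest route is probably to identify the generating $j$-dense monomorphism explicitly from \cref{PropositioniOCinOC} (it is the image inclusion of $\uniobj^{[1]}\to 1$ in $\OC$, corresponding under \cref{FactEquivObjectClassifier} to $X^{[1]}\to 1$, whose pullback-stable inversion by a left-exact left adjoint is exactly epimorphy of $X\to 1$) and then invoke the standard fact that a geometric morphism factors through a subtopos iff its inverse image inverts the generating dense monos.
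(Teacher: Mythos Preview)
Your overall architecture is correct and would work: reduce to identifying which $f_X\colon\F\to\OC$ factor through the subtopos $\iOC$, then show this happens exactly when $X$ is inhabited. However, you attack the factorization criterion on the \emph{inverse image} side (Lawvere--Tierney topology, generating dense monos, checking that $f_X^{\ast}$ inverts the support inclusion of $\uniobj$), and you yourself note this gets tangled because $\iota_{\ast}$ is not cocontinuous and the comparison maps need care. The paper instead works on the \emph{direct image} side, which is much cleaner here: $f_X$ factors through $\iota$ iff $f_{X\ast}$ lands in the essential image of $\iota_{\ast}$, and since $f_{X\ast}(A)[n]=\F(X^n,A)$, \cref{PropositioniOCinOC} says this holds iff for every $A$ the diagram $\F(1,A)\to\F(X,A)\rightrightarrows\F(X^2,A)$ is an equalizer, i.e.\ iff $X^2\rightrightarrows X\to 1$ is a coequalizer in $\F$, i.e.\ (regularity) iff $X\to 1$ is epi.

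What each approach buys: yours is the more ``generic'' argument that would apply to any subtopos presented by a Grothendieck topology, but requires you to correctly identify the generating cover and verify that inverting it under a lex left adjoint is equivalent to epimorphy of $X\to 1$ (your sketch of this last step is right in spirit but not quite pinned down). The paper's approach exploits the very explicit description of $\iota_{\ast}$ available from \cref{PropositioniOCinOC} and the concrete form of $f_{X\ast}$, turning the whole thing into a two-line computation with no topology bookkeeping.
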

    \begin{proof}
        As the embedding $\iota \colon \iOC \hookrightarrow \OC$ is an embedding, there is a fully faithful embedding functor 
        \[
        \Geom(\F, \iOC)\to \Geom(\F,\OC)\simeq \F.
        \]
        We will prove that the essential image of the above functor is the full subcategory consisting of inhabited objects of $\F$.
        Take an arbitrary object $X\in \ob(\F)$. Let $f\colon \F \to \OC$ denote the corresponding geometric morphism.
        Under these settings, the following conditions are equivalent:
        \begin{itemize}
            \item The geometric morphism $f$ factors through the geometric embedding $\iOC\hookrightarrow \OC$.
            \item 
        $\iff$ 
        the direct image functor
        \[
        f_{\ast}\colon \F \to \OC \colon A \mapsto (n\mapsto \F(X^n,A)).
        \]
        factors through the embedding functor 
        $\iota_{\ast}\colon \iOC\hookrightarrow \OC$.
        \item
        $\iff$(\cref{PropositioniOCinOC})
        For every $A\in \ob(\F)$,
        the set of global sections $\F(X^0, A) \cong \F(1, A) \cong \Gamma(A)$ is (naturally) isomorphic to the equalizer 
        \[
        \begin{tikzcd}
            \F(1,A)\ar[r]&\F(X,A)\ar[r,shift left]\ar[r,shift right]&\F(X^2,A)
        \end{tikzcd}
        \]
        in the category of sets $\Set$.
        \item
        $\iff$
        The diagram
        \[
        \begin{tikzcd}
            X^2\ar[r,shift left]\ar[r,shift right]&X\ar[r]&1
        \end{tikzcd}
        \]
        is a coequalizer diagram in the topos $\F$.
        \item 
        $\iff$ (Topoi are regular.)
        The unique morphism $X\to 1$ is an epimorphism.
        \end{itemize}
        This completes the proof.
    \end{proof}
This theorem is mentioned in \cite{anel2021topo}.



\subsubsection{Length and minimal expression}\label{SubsectionLengthInhabitedcases}

Now we can prove that if $F$ is in the essential image of the embedding $\iota\colon \iOC \to \OC$, then \cref{LemmaWeakUniquenessMinimalExpression} and \cref{PropositionWeakUniquenessMinimalExpression} 
hold with no exception. 

\begin{lemma}[Inhabited version of Comparison lemma (\cref{LemmaWeakUniquenessMinimalExpression})]\label{LemmaStrongUniqunessMinimalExpression}
    Suppose $(X\lar{f} A, \sigma \in FA)$ is a minimal expression of $t\in X\otimes F$ and $F$ is in the essential image of $\iota_{\ast} \colon \iOC \to \OC$. Then, for any other expression $(X\lar{g} B, \tau \in FB)$, we have $\Image(f) \subset \Image(g)$. 
\end{lemma}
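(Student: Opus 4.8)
The plan is to mimic the proof of \cref{LemmaWeakUniquenessMinimalExpression}, but to handle the case $\#A=1$ separately, since this is precisely where that proof broke down (it needed a second element $a'\in A\setminus\{a\}$). So first I would dispose of the case $\#A>1$ by simply quoting the argument of \cref{LemmaWeakUniquenessMinimalExpression}, which makes no use of any hypothesis on $F$. It remains to treat the case where the minimal length is exactly $1$ (the case $\#A=0$ cannot arise, since then $X\otimes F$ would only see $F[0]$, but more to the point if $\Le(t)=0$ there is nothing to prove as $\Image(f)=\emptyset$).

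So suppose $\Le(t)=1$, write the minimal expression as $(X\lar{x}[1],\sigma\in F[1])$ with $x\in X$, and let $(X\lar{g}B,\tau\in FB)$ be an arbitrary expression; I must show $x\in\Image(g)$. Suppose not. If $B=\emptyset$, then $t=g\otimes\tau$ factors through $X\otimes(F[0])$, and since $F\cong\iota_\ast\iota^\ast F$ we have $F[0]\cong \mathrm{eq}(F[1]\rightrightarrows F[2])$; unwinding, $t$ would then have an expression of length $0$, contradicting $\Le(t)=1$. So assume $B\neq\emptyset$ and $x\notin\Image(g)$. Pick any $b_0\in B$ and set $x_0:=g(b_0)\neq x$. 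Define $h\colon X\to X$ by $h(x)=x_0$ and $h=\id$ elsewhere; then $h$ is not injective (it identifies $x$ and $x_0$, which are distinct), $h\circ g=g$, and applying $h\otimes F$ to $x\otimes\sigma=g\otimes\tau$ gives $(h\circ x)\otimes\sigma=(h\circ g)\otimes\tau=g\otimes\tau=t$. But $h\circ x$ is the constant map at $x_0$, which is still injective on the one-element set $[1]$ — so the length of this new expression is still $1$, hence it is again a minimal expression, and \emph{no contradiction with \cref{LemmaMinimalInjectivity} arises}. This is the genuine obstacle: the trick of \cref{LemmaWeakUniquenessMinimalExpression} fundamentally exploited $\#A>1$, and here we need a new idea that actually uses the inhabitedness of $F$.

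The new idea I would pursue is to use the inhabitedness of $F$ to \emph{lengthen} the expression $(X\lar{g}B,\tau)$ without changing $t$, producing from it an expression whose underlying map has image containing $x$, and then to compare lengths. Concretely: since $F$ is inhabited (as an object of $\iOC$, equivalently $F[1]\neq\emptyset$, equivalently $F$ is inhabited in $\OC$ in the sense that $\uniobj\otimes F\to 1$ is epi — recall \cref{TheoremInhabitedObjectClassifier}), and more usefully, the coequalizer characterization in the proof of \cref{TheoremInhabitedObjectClassifier} tells us that for $F$ in the image of $\iota_\ast$ the canonical map $F[1]\to F[0]$ data is an equalizer; I want to exploit the \emph{surjectivity} $F[1]\twoheadrightarrow$ direction differently. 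Alternatively — and I think this is cleaner — I would argue contrapositively at the level of the functor $X\otimes-$: consider the map $\iota^\ast$ restriction and the fact that for inhabited $F$, $X\otimes F$ only depends on $X$ through its nonempty... Let me instead set up the map $u\colon X\otimes F\to X$ when $\Le\leq 1$: actually the point is that when $F=\iota_\ast G$, an element of length $\le 1$ in $X\otimes F$ is determined by a pair (a point of $X$, possibly nothing) and this "point" is well-defined modulo the length-$0$ part, which the equalizer condition controls. So if $t$ has length exactly $1$, its "support point" $x$ is forced, and any expression $(g,\tau)$ of $t$ must, after composing with a retraction, exhibit $x\in\Image(g)$; failing that, composing $t=g\otimes\tau$ with the map $X\to X\sqcup\{*\}/\!\!\sim$ collapsing $\Image(g)$ shows $t$ came from $F[0]$, i.e. $\Le(t)=0$, a contradiction. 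I expect the hard part is making this "support point is well-defined" argument precise, and the likely route is: apply $r\otimes F$ for a retraction $r\colon X\to B$ of... no, $g$ need not be injective. Apply instead the map $c_{x_0}\colon X\to X$ constant at $x_0=g(b_0)$: then $c_{x_0}\otimes F$ sends $t=g\otimes\tau\mapsto c_{x_0}\otimes(\text{something in }F[1])$, and also sends $x\otimes\sigma\mapsto x_0\otimes\sigma'$; comparing these via \cref{LemmaCancellationInjection} (both maps $[1]\to X$ hitting $x_0$, after passing through the image) should force $\sigma$ to be the image of an element of $F[0]$ under $F[0]\to F[1]$, and then — using exactly the equalizer/inhabited hypothesis — conclude $t\in\Image(X\otimes F[0]\to X\otimes F)$, so $\Le(t)=0$, contradiction. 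So the inhabitedness enters precisely in ruling out that $\sigma$ secretly comes from $F[0]$, which for general $F$ is possible (\cref{ExampleINE}, \cref{ExampleINE_2}) but for $F$ in the image of $\iota_\ast$ is not, because of how $\iota_\ast$ reconstructs $F[0]$ as the equalizer.
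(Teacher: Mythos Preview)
Your overall strategy matches the paper's: dispose of $\#A\neq 1$, and in the case $\Le(t)=1$ use the map $h$ (sending $x\mapsto x_0$, identity elsewhere) to obtain $x_0\otimes\sigma=t=x\otimes\sigma$, then argue that $\sigma$ must come from $F[0]$ via the equalizer description of $\iota_\ast$, contradicting minimality. You have correctly isolated all the ingredients, including the key intermediate equation.

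However, your final technical maneuver --- applying the constant map $c_{x_0}$ --- does not close the argument. From $x\otimes\sigma=g\otimes\tau$, applying $c_{x_0}\otimes F$ gives $x_0\otimes\sigma=x_0\otimes({!}\cdot\tau)$ where ${!}\colon B\to[1]$, and \cref{LemmaCancellationInjection} then yields only $\sigma={!}\cdot\tau$. This holds for \emph{arbitrary} $F$ and does not force $\sigma\in\Image(F[0]\to F[1])$; the hypothesis on $F$ is never engaged. The paper instead exploits the equation you already derived via $h$: since $x\neq x_0$, the map $s\colon[2]\to X$ with $s(0)=x$, $s(1)=x_0$ is injective, and rewriting $x\otimes\sigma=s\otimes(\iota_0\cdot\sigma)$ and $x_0\otimes\sigma=s\otimes(\iota_1\cdot\sigma)$, \cref{LemmaCancellationInjection} gives $\iota_0\cdot\sigma=\iota_1\cdot\sigma$ in $F[2]$. \emph{Now} the equalizer description of $F[0]$ from \cref{PropositioniOCinOC} applies and produces $\varepsilon\in F[0]$ mapping to $\sigma$, whence $t$ has an expression of length $0$ --- the desired contradiction. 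In short, the detour through $c_{x_0}$ is the gap; the missing move is to lift your two length-$1$ expressions through a common injection $[2]\hookrightarrow X$ so that \cref{LemmaCancellationInjection} lands you inside the equalizer diagram.
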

\begin{proof}
    If $\# A$ is more than one, this statement follows from \cref{LemmaWeakUniquenessMinimalExpression}. If $A =\emptyset$, the statement is trivial. So we assume $\Image(f)=\{x_0\}\not\subset \Image(g)$ and deduce contradiction. 

    As $f\otimes \sigma$ is a minimal expression, $B$ is non-empty. So, take $x_1\in \Image(g)$ and let $h\colon X \to X$ be a function defined by 
    \[
    h(x)=
    \begin{cases}
        x_1 & (x=x_0)\\
        x & (\text{otherwise}).
    \end{cases}
    \]
    Then, the composite of $X\lar{h} X \lar{g} B$ is equal to $g$. Applying $h\otimes F$ (\cref{LemmaNaturalityOfCoend}), the equation $f\otimes \sigma = g\otimes \tau$ implies the equation $(h\circ f)\otimes \sigma = (h\circ g)\otimes \tau=g\otimes \tau=t=f\otimes \sigma$. 
    Let $s=f\coprod (h\circ f)\colon A\coprod A\to X$ be the coproduct of $f$ and $h\circ f$, and $\iota_0,\iota_1\colon A\to A\coprod A$ be natural inclusions. Then, 
    \[
    s\otimes (\iota_0 \cdot \sigma)=(s\circ \iota_0) \otimes \sigma= f\otimes \sigma=(h\circ f)\otimes \sigma=(s\circ \iota_1)\otimes \sigma=s\otimes (\iota_1 \cdot \sigma).
    \]
    As $\Image(s)=\Image(f)\cup \Image(h\circ f)=\{x_0,x_1\}$, $s$ is injective, so $\iota_0 \cdot \sigma=\iota_1 \cdot \sigma$ by \cref{LemmaCancellationInjection}. As $F$ is in the essential image of $\iota_{\ast}\colon \iOC \to \OC$, 
        \[
        \begin{tikzcd}
            F[0]\ar[r]&FA\ar[r,shift left]\ar[r,shift right]&F(A\coprod A)
        \end{tikzcd}
        \]
    is an equalizer diagram. Therefore, we can take $\varepsilon \in F[0]$ such that $(A \lar{} [0])\cdot (\varepsilon \in F[0]) = \sigma$. Then, $t=f\otimes \sigma = (X\lar{} [0])\otimes (\varepsilon \in F[0])$ and it contradicts the fact that $f\otimes \sigma$ is a minimal expression of $t$.
\end{proof}
    
\begin{proposition}[Inhabited version of Uniqueness of minimal expression (\cref{PropositionWeakUniquenessMinimalExpression})]\label{PropositionStrongUniquenessMinimalExpression}
    Suppose $(X\lar{f} A, \sigma \in FA)$ and $(X\lar{g} B, \tau \in FB)$ are minimal expressions of $t\in X\otimes F$ , $F$ is in the essential image of $\iota_{\ast}\colon \iOC\to \OC$ and $X\neq \emptyset$. Then there exists a bijection $\phi\colon A \to B$ such that $f=g\circ \phi$ and $\phi \cdot \sigma = \tau$.
    \[
    \begin{tikzcd}[row sep = 5pt]
        &A\ar[dd,"\phi", "{\rotatebox{90}{$\simeq$}}"']\ar[ld,"f"']&FA\ni \sigma \ar[dd,"F\phi", "{\rotatebox{90}{$\simeq$}}"']\\
        X&&\\
        &B\ar[lu,"g"]&FB\ni \tau
    \end{tikzcd}
    \]
    
\end{proposition}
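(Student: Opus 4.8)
The plan is to mimic the proof of \cref{PropositionWeakUniquenessMinimalExpression}, now feeding in the strengthened comparison lemma \cref{LemmaStrongUniqunessMinimalExpression} instead of \cref{LemmaWeakUniquenessMinimalExpression}, so that the argument goes through even in the exceptional cases $\Le(t)\le 1$. First I would dispose of the cases by length. If $\Le(t)=0$, then by \cref{LemmaStrongUniqunessMinimalExpression} both $\Image(f)$ and $\Image(g)$ are contained in each other and hence both empty, so $A=B=\emptyset$ and the unique function $\phi\colon\emptyset\to\emptyset$ works; the equality $\phi\cdot\sigma=\tau$ follows from $f\otimes\sigma=g\otimes\tau$ via \cref{LemmaCoyoneda} (both sides are pulled back from $F[0]$ along the empty map, and since $X\neq\emptyset$ this is where the hypothesis is used — compare \cref{ExampleINE_2}, where $X\neq\emptyset$ prevents the pathology because the two expressions are \emph{not} both reachable; actually here the point is that $F\cong\iota_\ast F'$ so $F[0]\rightarrowtail F[1]$ is mono, killing the ambiguity). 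If $\Le(t)\ge 2$, I would simply invoke \cref{PropositionWeakUniquenessMinimalExpression} verbatim.

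The remaining case is $\Le(t)=1$, i.e.\ $A=\{a\}$, $B=\{b\}$ are singletons. By \cref{LemmaStrongUniqunessMinimalExpression} applied in both directions, $\Image(f)=\Image(g)$, so $f(a)=g(b)$; let $\phi\colon A\to B$ be the unique bijection $a\mapsto b$, which then satisfies $f=g\circ\phi$ by construction. It remains to show $\phi\cdot\sigma=\tau$, equivalently $F\phi(\sigma)=\tau$ in $FB$. From $g\otimes\tau=f\otimes\sigma=(g\circ\phi)\otimes\sigma=g\otimes(\phi\cdot\sigma)$ and the fact that $g$ is injective (\cref{LemmaMinimalInjectivity}) with $B$ non-empty, \cref{LemmaCancellationInjection} gives $\tau=\phi\cdot\sigma$, finishing the proof.

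Actually, once I notice that the cancellation step ($g$ injective, $B\neq\emptyset$, $g\otimes\tau=g\otimes(\phi\cdot\sigma)\Rightarrow\tau=\phi\cdot\sigma$) works uniformly whenever $B\neq\emptyset$, I can unify the $\Le(t)=1$ and $\Le(t)\ge 2$ cases and keep the proof essentially identical to that of \cref{PropositionWeakUniquenessMinimalExpression}: by \cref{LemmaMinimalInjectivity} and \cref{LemmaStrongUniqunessMinimalExpression} there is a unique bijection $\phi\colon A\to B$ with $f=g\circ\phi$ (the inclusions $\Image(f)\subset\Image(g)$ and $\Image(g)\subset\Image(f)$ together with injectivity of both $f$ and $g$ force this), and then $g\otimes\tau=f\otimes\sigma=g\otimes(\phi\cdot\sigma)$ together with \cref{LemmaCancellationInjection} gives $\phi\cdot\sigma=\tau$ provided $B\neq\emptyset$; the case $A=B=\emptyset$ is the trivial one handled above, where instead of \cref{LemmaCancellationInjection} one uses that $F[0]\rightarrowtail F[1]$ is mono (equivalently \cref{LemmaCoyoneda} for the empty source together with the equalizer description of $\iota_\ast$). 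The one genuine subtlety — and the only place the inhabitedness of $F$ and the hypothesis $X\neq\emptyset$ do real work — is exactly the promotion of the comparison lemma to \cref{LemmaStrongUniqunessMinimalExpression}, which has already been established; given that, the present proposition is a short bookkeeping argument with no real obstacle. I would therefore write the proof in the unified form, flagging the $A=\emptyset$ subcase explicitly so the reader sees why \cref{ExampleINE_2} is not a counterexample here.
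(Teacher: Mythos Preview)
Your proof is correct and follows essentially the same approach as the paper: split into the cases $\Le(t)\neq 0$ (handled uniformly via \cref{LemmaStrongUniqunessMinimalExpression}, injectivity of $f,g$, and \cref{LemmaCancellationInjection}) and $\Le(t)=0$ (where $A=B=\emptyset$, $\phi=\id_\emptyset$, and one must argue $\sigma=\tau$). The only place your write-up is thinner than the paper's is the empty case: you correctly identify that $F[0]\rightarrowtail F[1]$ being mono is the key and that $X\neq\emptyset$ enters here, but the paper makes the missing half-step explicit --- pick $h\colon[1]\to X$, push $\sigma,\tau$ to $\sigma',\tau'\in F[1]$ along $[0]\to[1]$, observe $h\otimes\sigma'=t=h\otimes\tau'$, cancel via \cref{LemmaCancellationInjection} to get $\sigma'=\tau'$, and then use the mono $F[0]\rightarrowtail F[1]$ --- whereas your appeal to \cref{LemmaCoyoneda} doesn't literally apply (it assumes $X$ finite and the map is $\id_X$).
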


\begin{proof}    
    First, we consider the case the length of $t$ is not zero. By \cref{LemmaStrongUniqunessMinimalExpression},  $\Image(f)=\Image(g)\neq \emptyset$. Therefore, the same argument as \cref{PropositionWeakUniquenessMinimalExpression} shows the existence of $\phi$. 
    
    Then, we consider the case the length of $t$ is zero. In this case, $A=B=\emptyset$ and $f,g$ are the empty map. Let $F[1]\ni\sigma'=([1] \lar{} [0])\cdot (\sigma \in F[0])$ and $F[1]\ni\tau'=([1] \lar{} [0])\cdot (\tau \in F[0])$. As $X\neq\emptyset$, we can take a morphism $h\colon [1]\to X$. Then, 
    \[
    (X\lar{h} [1])\otimes( \sigma' \in F[1])=(X\lar{f} [0])\otimes( \sigma \in F[0])=t=(X\lar{g} [0])\otimes( \tau \in F[0])=(X\lar{h} [1])\otimes( \tau' \in F[1]).
    \]
    Therefore, we obtain $\sigma'=\tau'$ by \cref{LemmaCancellationInjection}. As
    \[
        \begin{tikzcd}
            F[0]\ar[r]&F[1]\ar[r,shift left]\ar[r,shift right]&F[2]
        \end{tikzcd}
    \]
    is an equalizer diagram, $F[0]\to F[1]$ is injective and $\sigma =\tau$. As noted before, $f$ and $g$ are empty maps. So, $\phi=\id_{\emptyset}$ satisfies the condition of this proposition. 
\end{proof}

\section{Rigidities of an object}\label{SectionRigidities}

\begin{definition}[Rigidity]
    An object $X$ of a category $\C$ is \demph{rigid} if the identity morphism $\id_{X}\colon X \to X$ is the unique endomorphism of $X$.
\end{definition}



\begin{definition}[Lex-rigidity]
    An object $X$ of a category with finite limits $\C$ is \demph{lex-rigid}, if for any $n\geq 0$ and a morphism $f\colon X^n \to X$, there uniquely exists $0\leq i<n$ such that $f$ is equal to the $i$-th projection $p_i\colon X^n \to X$.
\end{definition}


\begin{definition}[Inhabited-lex-rigidity]
    An object $X$ of a category with binary products $\C$ is \demph{inhabited-lex-rigid}, if for any $n>0$ and a morphism $f\colon X^n \to X$, there uniquely exists $0\leq i<n$ such that $f$ is equal to the $i$-th projection $p_i\colon X^n \to X$.
\end{definition}

\begin{definition}[Topos-rigidity]
    An object $X$ of a Grothendieck topos $\E$ is \demph{topos-rigid} if the lex cocontinuous functor $\gmX\colon \OC \to \E$ that sends the universal object $\uniobj$ to $X$ is fully faithful (i.e., the corresponding geometric morphism $\E \to \OC$ is connected and defines a quotient topos).
\end{definition}
\memo{The free lex-cocomplete category with one object might be different from $\OC$...? See \cite{anel2021topo}.(We avoided this problem by considering only Grothendieck topoi.)}
\begin{definition}[Inhabited-topos-rigidity]
    An object $X$ of a Grothendieck topos $\E$ is \demph{inhabited-topos-rigid} if the object $X$ is inhabited and the corresponding geometric morphism $\E \to \iOC$ is connected.
\end{definition}


We are interested in the notion of rigidities because we can use it to solve Lawvere's open problem \cite{Open240411LawvereRevised}.


\begin{lemma}\label{LemmaMetaRigidity}
    If an endofunctor $F\colon \iOC \to \iOC$ is an equivalence of categories, then $F$ is naturally isomorphic to $\id_{\iOC}$.
\end{lemma}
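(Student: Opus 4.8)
The plan is to exploit the fact that $\iOC = \Func{\neFinSet}{\Set}$ is a presheaf topos, so an equivalence $F\colon \iOC \to \iOC$ is a cocontinuous functor, hence determined (up to isomorphism) by its restriction to representables via left Kan extension. Concretely, write $j\colon \neFinSet \hookrightarrow \iOC$ for the Yoneda embedding and recall that every object of $\iOC$ is a colimit of representables; since $F$ preserves all colimits (being an equivalence, it has a right adjoint), $F$ is the left Kan extension of $F\circ j$ along $j$. So it suffices to understand what $F$ does to the representable presheaves $\rep{[n]}$ for $n \geq 1$, naturally in $[n]$.

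The first key step is to pin down $\rep{[1]}$. The representable $\rep{[1]}$ on the one-element set is the terminal object of $\neFinSet$ regarded as a presheaf — no, more usefully: I would identify $\rep{[1]}$ as a \emph{projective generator} and, crucially, characterize it by a categorical property preserved under equivalence. One clean route: in $\iOC$ every nonzero object admits a map from $\rep{[1]}$ (since $\rep{[1]}(A) = \neFinSet([1],A) = A \neq \emptyset$ always, so $\rep{[1]}$ is exactly the ``universal inhabited object'' $\uniinhobj$, which is inhabited and in fact $\hom(\rep{[1]},-)$ is the global sections / evaluation-compatible functor). The point I want: $\rep{[1]} \cong \uniinhobj$ is characterized up to isomorphism inside $\iOC$, e.g. as the image of the terminal object under the defining inclusion, or as the unique (up to iso) indecomposable projective $P$ such that $1$ is a quotient of $P$ and $\hom(P,P)$ is trivial — whatever invariant description is cleanest. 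Then $F$, preserving this property, must send $\rep{[1]}$ to something isomorphic to $\rep{[1]}$, and likewise $F$ must send $\rep{[n]} = \rep{[1]}^{\text{(copower by }[n])}$... more carefully, $\rep{[n]}$ is the $n$-fold \emph{product}? No — in the presheaf category $\rep{[n]}$ is representable, and $\neFinSet$ has coproducts computed as disjoint union, which Yoneda sends to products in $\iOC^{\op}$... I would instead use that $[n]$ is the coproduct of $n$ copies of $[1]$ in $\neFinSet$ only fails since $[0]\notin\neFinSet$; rather $[n+1] = [n]\sqcup[1]$ and this is genuinely a coproduct in $\neFinSet$, so $\rep{[n]}$ is built inductively. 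Tracking $F$ on the generating object $\rep{[1]}$ and the two parallel maps $\rep{[1]}\rightrightarrows \rep{[2]}$ (which, by \cref{Lemmainitiality}, generate everything) should force $F$ to be isomorphic to the identity on all of $\neFinSet$, naturally.

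The second step is to upgrade a family of isomorphisms $F(\rep{[n]}) \cong \rep{[n]}$ to a natural isomorphism of functors $F \circ j \cong j$ on $\neFinSet$: since $\neFinSet$ is generated under composition by the maps between $[1]$ and $[2]$, and $F$ is faithful on morphisms, one checks compatibility there and invokes \cref{Lemmainitiality} (or just the fact that $\neFinSet$ is a category where every object is a finite colimit of $[1]$'s) to spread naturality everywhere. Then left Kan extension along $j$ is functorial, so $F = \mathrm{Lan}_j(F\circ j) \cong \mathrm{Lan}_j(j) = \id_{\iOC}$.

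The main obstacle I expect is the rigidity input at the ground level: showing $F(\rep{[1]}) \cong \rep{[1]}$ and that the induced self-map of $\neFinSet$ cannot be a nontrivial automorphism. This reduces to the (classical) fact that $\neFinSet$ has no nontrivial auto-equivalences — equivalently, that $\iOC$ as an abstract topos ``remembers'' its site $\neFinSet$ up to iso (it is the category of atoms/tiny objects, and the atoms are exactly the $\rep{[n]}$, with their morphisms recovered categorically). I would isolate this as: the full subcategory of tiny (= internally projective, finitely presentable, or ``absolutely presentable'') objects of $\iOC$ is equivalent to $\neFinSet$, this equivalence is canonical hence preserved by $F$, and $\neFinSet$ is rigid because $[1]$ is the unique object with exactly one endomorphism while $[n]$ is distinguished by $|\hom([1],[n])| = n$. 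Once that combinatorial rigidity of $\neFinSet$ is in hand, the rest is formal.
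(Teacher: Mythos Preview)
Your proposal is correct, and it shares its core ingredient with the paper: both arguments recognise the full subcategory of tiny objects in $\iOC$ as (the opposite of) $\neFinSet$, using Cauchy completeness. Where you diverge is in the final step. You reduce to the combinatorial rigidity of the whole site $\neFinSet$ and then left-Kan-extend back up. The paper instead characterizes only the single object $\uniinhobj = y[1]$ --- as the \emph{initial} tiny object, since $[1]$ is terminal in $\neFinSet$ --- and then invokes the universal property of $\iOC$ as the classifying topos of inhabited objects (\cref{TheoremInhabitedObjectClassifier}): since $F$ is an equivalence it is lex and cocontinuous, hence the inverse image of a geometric morphism, and such morphisms are determined up to natural isomorphism by where they send $\uniinhobj$. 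This bypasses verifying rigidity of the entire site. Your route is more self-contained (it does not lean on the classifying-topos result), but note that the rigidity argument you sketch only pins down objects up to isomorphism; you still owe the natural isomorphism, which follows because $[1]$ is a dense generator of $\neFinSet$ (every $[m]\to[n]$ is determined by its effect on points $[1]\to[m]$). A minor slip: the tiny objects form $(\neFinSet)^{\op}$, not $\neFinSet$, though this is immaterial since rigidity passes to opposites.
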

\begin{proof}
Due to the universality of the topos $\iOC$, it is enough to prove that the functor $F$ sends the universal inhabited object $\uniinhobj$ to itself up to isomorphism. To prove this, we characterize the object $\uniinhobj$ by the categorical structure of the topos $\iOC$. Since the category $\neFinSet$ is Cauchy complete, (the essential image of) the full subcategory $(\neFinSet)^{\op}\hookrightarrow \iOC$ is characterized as the full subcategory consisting of all tiny (= small projective) objects. Therefore, the universal inhabited object $\uniinhobj$ is characterized as an initial object among the tiny objects in $\iOC$. This characterization is preserved by the equivalence $F$.
\end{proof}
\begin{proposition}\label{PropositionIsotopoiImpliesIsoobjects}
    If a Grothendieck topos $\E$ has proper class many (mutually non-isomorphic) inhabited-topos-rigid objects, then the topos $\E$ has proper class many (mutually non-equivalent) quotient topoi.
\end{proposition}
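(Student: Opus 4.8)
The plan is to send each inhabited-topos-rigid object of $\E$ to the quotient topos cut out by its associated connected geometric morphism into $\iOC$, and then to show that this assignment is injective on isomorphism classes; the conclusion about cardinality is then immediate. Concretely, by \cref{TheoremInhabitedObjectClassifier} every inhabited object $X\in\ob(\E)$ is of the form $X\cong f_{X}^{\ast}(\uniinhobj)$ for an essentially unique geometric morphism $f_{X}\colon\E\to\iOC$, and the assignment $f\mapsto f^{\ast}(\uniinhobj)$ is fully faithful on $\Geom(\E,\iOC)$. If $X$ is moreover inhabited-topos-rigid, then by definition $f_{X}$ is connected, hence represents a quotient topos of $\E$ (in the sense of \cref{AppendixQuotientTopoi}). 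So $[X]\mapsto[f_{X}]$ defines a map from the class of isomorphism classes of inhabited-topos-rigid objects of $\E$ to the class of equivalence classes of quotient topoi of $\E$; once injectivity is established, the image is a proper class by hypothesis, forcing the class of quotient topoi of $\E$ to be a proper class, which is the assertion.

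For injectivity I would argue as follows. Suppose $X$ and $Y$ are inhabited-topos-rigid and the quotient topoi represented by $f_{X}$ and $f_{Y}$ coincide. Since both connected geometric morphisms have codomain $\iOC$, this yields a self-equivalence $e\colon\iOC\to\iOC$ together with an isomorphism $e\circ f_{X}\cong f_{Y}$ in $\Geom(\E,\iOC)$. By \cref{LemmaMetaRigidity} we have $e\cong\id_{\iOC}$, whence $f_{Y}\cong f_{X}$; applying the functor $f\mapsto f^{\ast}(\uniinhobj)$ (which in any case sends isomorphisms to isomorphisms) gives $Y\cong X$, as desired. This is the entire content once the $\iOC$-side rigidity \cref{LemmaMetaRigidity} is granted.

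The point that will require care — and which I regard as the only real obstacle — is the precise meaning of ``two connected geometric morphisms represent the same quotient topos.'' If the intended relation is the $\E$-indexed mutual-factorization one, i.e. there are geometric morphisms $h,k\colon\iOC\to\iOC$ with $f_{Y}\cong h\circ f_{X}$ and $f_{X}\cong k\circ f_{Y}$, then I would first upgrade $h$ to an equivalence before invoking \cref{LemmaMetaRigidity}: since $f_{X}^{\ast}$ and $f_{Y}^{\ast}$ are fully faithful, so are $h^{\ast}$ and $k^{\ast}$, and the relations $f_{X}^{\ast}\,h^{\ast}\,k^{\ast}\cong f_{X}^{\ast}$ and $f_{Y}^{\ast}\,k^{\ast}\,h^{\ast}\cong f_{Y}^{\ast}$ together with the fact that a fully faithful functor is full, faithful, and reflects isomorphisms force $h^{\ast}k^{\ast}\cong\id_{\iOC}$ and $k^{\ast}h^{\ast}\cong\id_{\iOC}$; hence $e\coloneqq h$ is an equivalence of topoi and the argument of the previous paragraph applies verbatim. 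Either way the substantive step is \cref{LemmaMetaRigidity}, which rigidifies $\iOC$ itself; everything surrounding it is formal $2$-categorical bookkeeping, and I do not anticipate a genuine difficulty there.
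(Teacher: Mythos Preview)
Your argument is correct and follows essentially the same route as the paper: assign to each inhabited-topos-rigid object its connected geometric morphism to $\iOC$, assume two such quotient topoi are equivalent via a self-equivalence $e$ of $\iOC$, invoke \cref{LemmaMetaRigidity} to conclude $e\cong\id_{\iOC}$, and deduce that the underlying objects are isomorphic. Your extra paragraph handling a possible mutual-factorization definition of quotient-topos equivalence is unnecessary here (the paper's \cref{AppendixQuotientTopoi} already takes $e$ to be an equivalence), but it does no harm.
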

\begin{proof}
    For two inhabited-topos-rigid object $X,Y\in \ob(\E)$, the corresponding geometric morphisms $\overline{X}, \overline{Y}\colon \E \to \iOC$ are connected, and thus define quotient topoi of $\E$. 

    Suppose these two quotient topoi are equivalent, i.e., there exists an equivalence of categories $F\colon \iOC \to \iOC$ such that the following diagram of geometric morphisms
    \[
    \begin{tikzcd}[row sep = 10pt]
        &\iOC\ar[dd,"F"{name = T}]\\
        \E\ar[ru,"\overline{X}"]\ar[rd,"\overline{Y}"']\ar[to = T, phantom, "\cong"]&\\
        &\iOC
    \end{tikzcd}
    \]
    is commutative up to natural isomorphism.
    By \cref{LemmaMetaRigidity}, we conclude that two geometric morphisms $\overline{X}, \overline{Y}$ are naturally isomorphic, and especially $X,Y$ are isomorphic by the Yoneda lemma\memo{tuiki sitaked jyoutyoudattara kesite kudasai}. This proves that the two non-isomorphic inhabited-topos-rigid objects induce two non-equivalent quotient topoi. 
\end{proof}

Now our task is reduced to constructing a Grothendieck topos that has proper class many mutually non-isomorphic inhabited-topos-rigid objects. For convenience, we rephrase the inhabited-topos-rigidity in elementary terms.


\begin{lemma}\label{LemmaiOCLeftDescription}
    For a Grothendieck topos $\E$ and an inhabited object $X\in \ob(\E)$, let $f\colon \E \to \iOC$ be the corresponding geometric morphism. Then the left adjoint part $f^{\ast}$ is naturally isomorphic to the composite of the embedding $\iota_{\ast}\colon \iOC \to \OC$ and $X\otimes {-}\colon \OC \to \E$:
    \[
    \begin{tikzcd}
        \E && \iOC\ar[ll,"f^{\ast}"', ""{name=A}]\ar[ld,"\iota_{\ast}"]\\
        &\OC\ar[lu,"X\otimes {-}"]\ar[to = A, phantom, "\cong"] &
    \end{tikzcd}
    \]
\end{lemma}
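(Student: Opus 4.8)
The plan is to read the statement off \cref{TheoremInhabitedObjectClassifier} together with the basic fact that the direct image of a geometric embedding is fully faithful. First I would name the geometric morphism $g\colon \E \to \OC$ corresponding to $X \in \ob(\E)$ under the equivalence of \cref{FactEquivObjectClassifier}, so that by \cref{NotationTensorFunctor} its inverse image is $g^{\ast} = X\otimes{-}\colon \OC \to \E$. Since $X$ is inhabited, \cref{TheoremInhabitedObjectClassifier} tells us that $g$ factors through the embedding $\iota\colon \iOC\hookrightarrow\OC$, and — by the commuting square recorded there — the factoring morphism is exactly the geometric morphism $f\colon \E \to \iOC$ of the statement; indeed $(\iota\circ f)^{\ast}\uniobj = f^{\ast}\iota^{\ast}\uniobj = f^{\ast}\uniinhobj = X = g^{\ast}\uniobj$. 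Hence $g \cong \iota\circ f$ in $\Geom(\E,\OC)$, and passing to inverse images gives a natural isomorphism
\[
X\otimes{-}\;=\;g^{\ast}\;\cong\;f^{\ast}\circ\iota^{\ast}\colon\OC\to\E.
\]

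The second and final step is to precompose with $\iota_{\ast}$. Because $\iota$ is a geometric embedding, $\iota_{\ast}\colon\iOC\to\OC$ is fully faithful, equivalently the counit $\varepsilon\colon\iota^{\ast}\iota_{\ast}\Rightarrow\id_{\iOC}$ is a natural isomorphism; this is also visible directly from \cref{PropositioniOCinOC}, since $\iota^{\ast}$ is restriction along $\neFinSet\hookrightarrow\FinSet$ while $\iota_{\ast}$ only adjusts the value at $[0]$, which $\iota^{\ast}$ then forgets. Composing the displayed isomorphism with $\iota_{\ast}$ and then applying $f^{\ast}\varepsilon$ yields
\[
(X\otimes{-})\circ\iota_{\ast}\;\cong\;f^{\ast}\circ\iota^{\ast}\circ\iota_{\ast}\;\xrightarrow{\;f^{\ast}\varepsilon\;}\;f^{\ast}\circ\id_{\iOC}\;=\;f^{\ast},
\]
a composite of natural isomorphisms, which is precisely the claimed identification $f^{\ast}\cong(X\otimes{-})\circ\iota_{\ast}$.

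I do not anticipate a genuine obstacle: the argument is entirely formal once \cref{TheoremInhabitedObjectClassifier} and the full faithfulness of $\iota_{\ast}$ are in hand. The one point that deserves a moment's attention is confirming that the morphism produced by the factorization is the very geometric morphism $f$ named in the lemma rather than merely some lift of $g$ through $\iota$; but this is exactly what the commuting square in \cref{TheoremInhabitedObjectClassifier} asserts, so nothing further is required.
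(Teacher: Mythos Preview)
Your argument is correct and follows essentially the same route as the paper: name $g\colon \E\to\OC$ with $g^{\ast}=X\otimes{-}$, observe $g\cong\iota\circ f$, pass to inverse images, and precompose with $\iota_{\ast}$ using $\iota^{\ast}\iota_{\ast}\cong\id_{\iOC}$. The only difference is that you justify $g\cong\iota\circ f$ explicitly via the commuting square of \cref{TheoremInhabitedObjectClassifier}, whereas the paper simply asserts that $f$ is the codomain restriction of $g$.
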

\begin{proof}
Let $g\colon \E \to \OC$ denote the geometric morphism corresponding to the object $X$, whose left adjoint part $g^{\ast}$ is $X\otimes {-}$ (\cref{NotationTensorFunctor}).
    The geometric morphism $f\colon \E \to \iOC$ is a codomain restriction of the geometric morphism $g\colon \E\to \OC$, i.e., $g\cong \iota \circ f$. Then we have $g^{\ast}\cong f^{\ast} \circ \iota^{\ast}$. By precomposing $\iota_{\ast}$, we obtain $g^{\ast}\circ \iota_{\ast} \cong f^{\ast} \circ \iota^{\ast}\circ \iota_{\ast} \cong f^{\ast}$.
\end{proof}

\begin{lemma}\label{LemmaRepresentablesAreInIOC}
    For any finite set $S$, the representable functor $y(S)=\FinSet(S,{-})\colon \FinSet \to \Set$ is in the essential image of the embedding $\iota_{\ast}\colon \iOC \to \OC$.
\end{lemma}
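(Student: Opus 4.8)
The goal is to show that for a finite set $S$, the representable functor $y(S) = \FinSet(S,-)$ lies in the essential image of $\iota_\ast\colon \iOC \to \OC$. By \cref{PropositioniOCinOC}, a functor $F\colon \FinSet \to \Set$ is (isomorphic to) $\iota_\ast$ of something precisely when the canonical comparison map $F[0] \to \lim_{A\in\FinSet} FA$ is an isomorphism, equivalently — using \cref{Lemmainitiality} — when the diagram $F[0] \to F[1] \rightrightarrows F[2]$ is an equalizer. So the plan is to check this equalizer condition directly for $F = y(S) = \FinSet(S,-)$.

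First I would compute the three sets involved: $\FinSet(S,[0])$ is empty if $S\neq\emptyset$ and a singleton if $S=\emptyset$; $\FinSet(S,[1])$ is always a singleton; and $\FinSet(S,[2]) = 2^{S}$, the set of subsets of $S$. Let me treat the two cases. If $S = \emptyset$, then all three sets $\FinSet(\emptyset,[0])$, $\FinSet(\emptyset,[1])$, $\FinSet(\emptyset,[2])$ are singletons, the two parallel maps $\FinSet(\emptyset,[1])\rightrightarrows\FinSet(\emptyset,[2])$ coincide (being maps between singletons), and the map from the singleton $\FinSet(\emptyset,[0])$ is the obvious isomorphism onto the equalizer; so the condition holds. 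If $S \neq \emptyset$, then $\FinSet(S,[0]) = \emptyset$, and I must check that the equalizer of the two maps $\FinSet(S,[1])\rightrightarrows\FinSet(S,[2])$ is also empty. The unique element of $\FinSet(S,[1])$ is the constant map $c_0\colon S\to [1]=\{0\}$; the two maps $[1]\rightrightarrows[2]$ are the constant maps $d_0,d_1$ picking out $0$ and $1$ respectively, so postcomposition sends $c_0$ to the constant map $S\to[2]$ at $0$ and to the constant map $S\to[2]$ at $1$. Since $S\neq\emptyset$ these two composites are distinct elements of $\FinSet(S,[2])$, hence $c_0$ is not in the equalizer, the equalizer is empty, and the comparison $\emptyset \to \emptyset$ is an isomorphism. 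This completes the verification in both cases.

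An alternative, slightly slicker route avoids the case split: observe that $y(S)$ is a coproduct of $S$-many copies of $y([1])$ in $\OC$ (since $\FinSet(S,-) = \prod_{s\in S}\FinSet([1],-)$... wait, that is a product, not a coproduct) — actually the cleanest conceptual argument is: the universal object $\uniobj \in \OC$ is inhabited when pulled back to any topos because $\iota_\ast(\uniinhobj)$ restricts to $\uniobj$ on $\neFinSet$, and the representables $y([n])$ for $n\ge 1$ are all of the form $\uniobj^{\,n}$; powers of inhabited objects stay in the image. But for $n=0$ one genuinely needs the direct check above, so the case analysis cannot be wholly eliminated. I would therefore present the direct equalizer computation as the main argument, since it is short and self-contained.

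The only subtlety — what I would call the "main obstacle," though it is mild — is keeping straight the degenerate case $S=\emptyset$, where $y(\emptyset)$ is the constant functor at a singleton and one must confirm it really does satisfy the equalizer condition rather than being excluded; the point is that $y(\emptyset) \cong \iota_\ast(\mathbf 1)$ where $\mathbf 1$ is the terminal object of $\iOC$, and $\iota_\ast$ preserves terminal objects. Everything else is a routine unwinding of finite-set combinatorics, so I would not belabor it.
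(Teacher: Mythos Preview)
Your proposal is correct and takes exactly the same approach as the paper: the paper's proof simply states that it suffices to verify the equalizer diagram $\FinSet(S,[0]) \to \FinSet(S,[1]) \rightrightarrows \FinSet(S,[2])$ and leaves the check implicit, whereas you have written out the easy case analysis in full. Your side remark about an alternative route is unnecessary but harmless; the direct computation is all that is needed.
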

\begin{proof}
    It suffices to prove that the following diagram is an equalizer diagram:
    \[
    \begin{tikzcd}
        \FinSet(S,[0]) \ar[r]&\FinSet(S,[1]) \ar[r, shift left]\ar[r, shift right]&\FinSet(S,[2]).
    \end{tikzcd}
    \]
\end{proof}

\begin{proposition}\label{PropositionUnwindingInhabitedToposRigidity}
For a Grothendieck topos $\E$ and an inhabited object $X\in \ob(\E)$, the object $X$ is inhabited-topos-rigid if and only if, for any 
\begin{itemize}
    \item object $F\in \ob(\OC)$ in the essential image of $\iota_{\ast}\colon \iOC \to \OC$,
    \item positive integer $n>0$, and
    \item morphism $f\colon X^n \to X\otimes F$ in the topos $\E$,
\end{itemize}
there exists a unique element $\tau \in F[n]$ such that $f= {-}\otimes \tau$ (\cref{NotationOtimesSigmaMorphism}).
\end{proposition}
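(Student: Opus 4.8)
The plan is to translate ``inhabited-topos-rigid'' into fully faithfulness of a left adjoint, and then reduce that fully faithfulness to a pointwise condition on representables. Since $X$ is inhabited (a standing hypothesis of the proposition), \cref{TheoremInhabitedObjectClassifier} supplies a geometric morphism $f\colon \E \to \iOC$ with $f^{\ast}(\uniinhobj)\cong X$, and by \cref{LemmaiOCLeftDescription} its inverse image part factors as $f^{\ast}\cong (X\otimes{-})\circ\iota_{\ast}$. By the definition of inhabited-topos-rigidity, $X$ is inhabited-topos-rigid precisely when $f$ is connected, i.e. when $f^{\ast}$ is fully faithful; so the whole task is to prove that $f^{\ast}$ is fully faithful if and only if the displayed elementary condition holds.

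First I would reduce fully faithfulness of $f^{\ast}$ to the representable objects of $\iOC$. The functor $f^{\ast}$ is a left adjoint, hence cocontinuous; therefore, for each fixed $Q\in\ob(\iOC)$, both functors $\iOC({-},Q)$ and $\E(f^{\ast}({-}),f^{\ast}Q)$ from $\iOC^{\op}$ to $\Set$ send colimits in $\iOC$ to limits in $\Set$ (for the second one this uses exactly that $f^{\ast}$ preserves colimits). The assignment $s\mapsto f^{\ast}(s)$ is a natural transformation between them, and since every object of $\iOC$ is canonically a colimit of representable functors $\neFinSet([n],{-})$ with $n>0$ (up to isomorphism $\neFinSet$ consists of the sets $[n]$, $n>0$), this natural transformation is an isomorphism for the given $Q$ as soon as it is an isomorphism on every $\neFinSet([n],{-})$. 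Letting $Q$ vary, we conclude: $f^{\ast}$ is fully faithful if and only if, for every $n>0$ and every $Q\in\ob(\iOC)$, the map $\iOC(\neFinSet([n],{-}),Q)\to\E(f^{\ast}\neFinSet([n],{-}),\,f^{\ast}Q)$, $s\mapsto f^{\ast}(s)$, is a bijection.

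Next I would identify the four corners. Put $F\coloneqq\iota_{\ast}Q$; as $Q$ ranges over $\iOC$, the object $F$ ranges up to isomorphism over the essential image of $\iota_{\ast}$, with $Q\cong\iota^{\ast}F$. By the covariant Yoneda lemma $\iOC(\neFinSet([n],{-}),Q)\cong Q[n]$, and $Q[n]=(\iota^{\ast}F)[n]=F[n]$ since $n>0$ and $\iota^{\ast}$ is restriction along $\neFinSet\hookrightarrow\FinSet$. By \cref{LemmaRepresentablesAreInIOC} (together with $\iota^{\ast}\iota_{\ast}\cong\id$, \cref{PropositioniOCinOC}) we get $\iota_{\ast}\neFinSet([n],{-})\cong y[n]$ in $\OC$, so $f^{\ast}\neFinSet([n],{-})=(X\otimes{-})(\iota_{\ast}\neFinSet([n],{-}))\cong X\otimes y[n]\cong X^{n}$, while $f^{\ast}Q=(X\otimes{-})(\iota_{\ast}Q)=X\otimes F$. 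Tracing an element $\sigma\in Q[n]$ through these identifications: it corresponds under Yoneda to the morphism $\overline{\sigma}\colon\neFinSet([n],{-})\to Q$ with value $\sigma$ at $\id_{[n]}$; its image under $\iota_{\ast}$ is, after the identification $\iota_{\ast}\neFinSet([n],{-})\cong y[n]$, exactly the morphism $\overline{\sigma}\colon y[n]\to F$ of \cref{NotationOtimesSigmaMorphism}; and applying $X\otimes{-}$ and precomposing with $X^{n}\cong X\otimes y[n]$ yields precisely the morphism ${-}\otimes\sigma$ of \cref{NotationOtimesSigmaMorphism}. Hence the bijection condition from the previous paragraph becomes: for every $n>0$ and every $F$ in the essential image of $\iota_{\ast}$, the map $F[n]\to\E(X^{n},X\otimes F)$, $\sigma\mapsto({-}\otimes\sigma)$, is a bijection, which is literally the statement of the proposition.

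Every step here is routine bookkeeping except for two points that deserve care. The first is the reduction to representables in the second paragraph; its engine is that $f^{\ast}$ is cocontinuous (so $\E(f^{\ast}({-}),f^{\ast}Q)$ sends colimits to limits) together with the density of representables in the presheaf topos $\iOC$, and one must make sure the natural transformation is compared correctly at the canonical colimit of representables. The second is the identification, in the third paragraph, of the map induced by $f^{\ast}$ with $\sigma\mapsto({-}\otimes\sigma)$: this is where the passage between $\iota^{\ast}$, $\iota_{\ast}$, the Yoneda lemmas for $\neFinSet$ and $\FinSet$, and \cref{LemmaRepresentablesAreInIOC} must all be kept consistent. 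I expect this second point to be the most error-prone part of writing out the full details.
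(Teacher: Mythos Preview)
Your proof is correct and follows essentially the same route as the paper: reduce fully faithfulness of $f^{\ast}$ to the representable objects (the paper invokes this as the ``Codensity theorem'' where you spell out the density-plus-cocontinuity argument), then identify $f^{\ast}$ on representables via \cref{LemmaiOCLeftDescription}, \cref{LemmaRepresentablesAreInIOC}, and Yoneda to obtain the map $\tau\mapsto({-}\otimes\tau)$. The only stylistic difference is that the paper routes the identification through the fully faithfulness of $\iota_{\ast}$ to pass from $\iOC(y[n],F)$ to $\OC(\iota_{\ast}y[n],\iota_{\ast}F)$ before applying $X\otimes{-}$, whereas you compute $f^{\ast}$ directly as $(X\otimes{-})\circ\iota_{\ast}$; these amount to the same thing.
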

\begin{proof}
Let $f$ denote the geometric morphism $f\colon \E \to \iOC$ corresponding to the inhabited object $X$. 
    The following conditions are equivalent:
    \begin{itemize}
        \item The object $X$ is inhabited-topos-rigid.
        \item $\iff$ for any pair of objects $G,F \in \ob(\iOC)$, the function
        \[
        f^{\ast}\colon \iOC(G,F)\to \E(f^{\ast} G, f^{\ast} F)
        \]
        is bijective.
        \item $\iff$(Codensity theorem) for any object $F \in \ob(\iOC)$, a positive integer $n>0$, the function
        \[
       f^{\ast}\colon \iOC(y[n],F)\to \E(f^{\ast} y[n], f^{\ast} F)
        \]
        is bijective.
        \item $\iff$(\cref{LemmaiOCLeftDescription} and $\iota_{\ast}$ is fully faithful) for any object $F \in \ob(\iOC)$, and a positive integer $n>0$, the function
        \[
        X\otimes {-}\colon \OC(\iota_{\ast} y[n],\iota_{\ast} F)\to \E(f^{\ast} y[n], f^{\ast} F)
        \]
        is bijective.
        \item $\iff$ (\cref{LemmaRepresentablesAreInIOC} and the Yoneda lemma)
        for any object $F \in \ob(\OC)$ in the essential image of the embedding $\iota_{\ast}\colon \iOC \to \OC$, and a positive integer $n>0$, the function
        \[
         F[n]\ni \tau  \mapsto (-\otimes \tau)\in\E(X^n, X\otimes  F)
        \]
        is bijective.
    \end{itemize}
\end{proof}
\section{Lex-rigid objects in the category of relational structures}\label{SectionLexRigidityAndLStructures}

\begin{definition}We adopt the following definitions:
\begin{itemize}
    \item A \demph{relational language} $\L$ is a language only with relational symbols, i.e., a set of relational symbols $\L=\{Q_{\lm}\}_{\lm \in \Lm}$ equipped with the arity $n_\lm \in \Z_{\geq 0}$.
    \item An \demph{$\L$-structure} $\A$ is a set $A$ equipped with a family of subsets $Q_{\lm}^A \subset A^{n_\lm}$. 
    \item For two $\L$-structures $\A=(A, Q_{\lm}^A), \B = (B,Q_{\lm}^B)$, an \demph{$\L$-structure morphism} $f\colon \A\to \B$ is a function between the underlying sets $f\colon A \to B$ such that $f(Q_{\lm}^A)\subset Q_{\lm}^B$ for every $\lm \in \Lm$.
    \item The category of $\L$-structures and $\L$-structure morphisms is denoted by $\Lstr$.
\end{itemize}
\end{definition}

Note that the category $\Lstr$ has all small products. In fact, the product of the $\L$-structures $(\A_i)_{i\in I}=(A_i,Q_{{\lm}}^i)_{i\in I}$ is the $\L$-structure $\A:=(\prod_{i\in I} A_i, \prod_{i\in I} Q_{\lm}^i)$. In particular, $n$-th power of $\A=(A,Q_{\lm}^A)$ is $(A^n,(Q_{\lm}^A)^n)$.


The goal of this section is to prove this proposition:
\begin{proposition}\label{PropositionManyNonEmptyLexRigid}
    There exists a countable relational language $\L=\{Q_{\lm}\}_{\lm\in\Lm}$ which has no nullary relational symbols, and for every cardinal $\kappa$, there exists an $\L$-structure $(X,Q_{\lm})$ satisfying the following conditions:
    \begin{itemize}
    \item $(X,Q_{\lm})$ is inhabited-lex-rigid in $\Lstr$.
    \item For all $\lm \in \Lm$, $Q_{\lm}\neq \emptyset$.
    \item $(\#X)^{\aleph_0}=\#X\geq \kappa$. 
    \end{itemize}
    
\end{proposition}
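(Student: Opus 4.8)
The plan is to build the $\L$-structure $(X,Q_\lm)$ from a rigid relational structure together with an $\omega$-ary pairing apparatus, so that the pairing forces any morphism $f\colon X^n\to X$ to be a projection. First I would fix a relational language $\L$ containing, for each $n>0$, an $(n+1)$-ary symbol $p_n$ intended to name the graph of an $n$-fold pairing function $X^n\xrightarrow{\sim} X$, plus enough auxiliary unary and binary symbols to pin down a rigid ``base'' structure on $X$ in the sense of \cref{PropositionManyNonEmptyLexRigid}'s weaker predecessors. The key point is that if the relation $P_n\subset X^{n+1}$ is (the graph of) a bijection $\langle-\rangle_n\colon X^n\to X$ that is definable and respected by every $\L$-morphism, then for a morphism $f\colon X^n\to X$ one can compare $f$ with the ``decoding'' maps: the tuple $(x_0,\dots,x_{n-1},f(x_0,\dots,x_{n-1}))$ must lie in some fixed relation, and rigidity of the base structure collapses the possibilities down to the $n$ projections, with uniqueness coming from the fact that distinct projections are distinguished by the base relations on a structure of size $\geq 2$.

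The key steps, in order, are: (1) Start from \cref{PropositionManyLexRigid} / \cref{PropositionManyRigid}, which for every cardinal $\kappa$ produce a (lex-)rigid $\L_0$-structure of size $\geq\kappa$; by taking $\kappa$ with $\kappa^{\aleph_0}=\kappa$ (e.g.\ $\kappa=\mu^{\aleph_0}$ for $\mu\geq$ the given bound, or a suitable $\beth$-fixed point) we may assume the underlying set $X$ satisfies $(\#X)^{\aleph_0}=\#X\geq\kappa$. (2) Since $(\#X)^{\aleph_0}=\#X$, fix bijections $\langle-\rangle_n\colon X^n\xrightarrow{\sim}X$ for each $n>0$ (or even one bijection $X^\omega\xrightarrow{\sim}X$ and derive the finite ones), and adjoin to the language the relational symbols $p_n$ whose interpretation $P_n$ is the graph of $\langle-\rangle_n$. (3) Check that the enlarged structure is still \emph{inhabited-lex-rigid}: given $f\colon X^n\to X$, use that $f$ as an $\L$-morphism must preserve all $P_m$, which rigidly ties $f$ to coordinate behaviour; run the argument showing $f$ agrees with a unique projection. (4) Verify the bookkeeping conditions: $\L$ is countable (countably many $p_n$ plus the countable $\L_0$), has no nullary symbols (all $p_n$ have arity $n+1\geq 2$, and we discard any nullary symbols of $\L_0$, which is harmless for rigidity), each $Q_\lm\neq\emptyset$ (the graphs $P_n$ are nonempty since $X\neq\emptyset$; for the base relations, arrange nonemptiness by construction — the earlier propositions should already deliver this, or pad each relation with a fixed witness tuple in a way that does not create endomorphisms), and the cardinality bound $(\#X)^{\aleph_0}=\#X\geq\kappa$ holds by choice of $X$.

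The main obstacle I anticipate is step (3) combined with the ``no nullary symbols / all relations nonempty'' constraint: the pairing relations $P_n$ interact with morphisms in a way that must \emph{simultaneously} (a) be rich enough to kill every non-projection $f\colon X^n\to X$ and (b) not themselves be the source of unwanted endomorphisms, while (c) every symbol's relation stays nonempty. In particular one must be careful that preservation of the graph of a bijection by a morphism $g\colon X\to X$ — $g(x_0,\dots,x_{n-1},\langle\vec x\rangle_n)\in P_n$ for all $\vec x$ — does genuinely force $g$ to be compatible with the base rigid structure rather than merely permuting fibers; the cleanest fix is to make the base $\L_0$-structure already lex-rigid (not just rigid) and to ensure the pairing graphs are preserved only by the identity-like behaviour, so that the lex-rigidity of $\L_0$ does the real work and the $p_n$ merely upgrade ``rigid'' to ``inhabited-lex-rigid'' by providing, for each arity $n$, a definable way to read off each coordinate. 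I would also double-check the edge case $n=1$: inhabited-lex-rigidity at $n=1$ is exactly rigidity of $X$, which is already guaranteed, so the pairing machinery only needs to handle $n\geq 2$.
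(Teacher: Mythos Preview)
You are over-engineering the passage from \cref{PropositionManyLexRigid} to \cref{PropositionManyNonEmptyLexRigid}. The former already produces \emph{inhabited-lex-rigid} $\L$-structures $(X,Q_\lm)$ with $(\#X)^{\aleph_0}=\#X\ge\kappa$; all that remains for \cref{PropositionManyNonEmptyLexRigid} is the bookkeeping of (i) deleting nullary symbols and (ii) making every relation nonempty. The paper does this in two lines: set $\L'=\{q_\lm : n_\lm\neq 0\}$ and put $Q'_\lm=Q_\lm$ when $Q_\lm\neq\emptyset$, and $Q'_\lm=X^{n_\lm}$ otherwise. The only observation needed is that every $\L'$-morphism $f\colon(X,Q'_\lm)^n\to(X,Q'_\lm)$ is automatically an $\L$-morphism $(X,Q_\lm)^n\to(X,Q_\lm)$, because any function preserves the empty relation, any function preserves the full relation, and nullary relations impose no condition on morphisms; hence $f$ is a projection. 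Your ``padding with a fixed witness tuple'' in step (4) would also work, for the same reason, but you state the worry backwards: the issue is not whether padding \emph{creates} endomorphisms, but whether every morphism of the new structure is still a morphism of the old one.

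Consequently your steps (2)--(3), adjoining the graphs $P_n$ of pairing bijections, are redundant once you invoke \cref{PropositionManyLexRigid}. If instead you mean to start only from \cref{PropositionManyRigid} and replace the paper's proof of \cref{PropositionManyLexRigid} by your graph-of-bijection encoding, then step (3) is the entire content and you have not carried it out. It is worth noting that the paper's argument for \cref{PropositionManyLexRigid} does \emph{not} use the graph of $\langle-\rangle_n$ directly; it introduces a unary relation $T_n=\{c_0,\dots,c_{n-1}\}$ of distinguished ``index'' elements together with a ternary relation $S_n=\{(x,c_i,p^n_i(x)):i<n,\ x\in X\}$. The constants are what single out a coordinate: preservation of $T_n$ forces $f(c_0,\dots,c_{n-1})=c_i$ for some specific $i$, rigidity of the base gives $f(x,\dots,x)=x$, and then preservation of $S_n$ applied to the triple $(x,c_i,x_i)$ yields $f(x_0,\dots,x_{n-1})=x_i$. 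With only the $(n{+}1)$-ary graph $P_n$ and no such indexing mechanism it is not clear how you would pin down the projection index, so that line would need a genuine argument before it could stand in for the paper's.
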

\begin{remark}\memo{kesu?}
    Let $S$ be a class that consists of $\L$-structures. Then, the following conditions are equivalent:
    \begin{itemize}
        \item $S$ has proper class many isomorphism classes.
        \item for any cardinal $\kappa$, there exists $(X,Q_{\lm}^X)\in S$ such that $\#X\geq \kappa$. 
    \end{itemize}
    Indeed, for fixed $\kappa$, there are up to $\aleph_0 2^{\kappa(\#\L)}$ $\L$-structures whose (underlying set's) size is less than $\kappa$, up to isomorphism.
\end{remark}

For rest of this section, $R_2$ denotes binary relation symbol. 

\begin{proposition}\label{PropositionManyRigid}
    For any infinite cardinal $\kappa$, there exists $\{R_2\}$-structure $(X,R_2^X)$ satisfying the following conditions:
    \begin{itemize}
    \item $(X,R_2^X)$ is rigid in $\mathbf{Str}_{\{R_2\}}$.
    \item $R_2^X \neq\emptyset$. 
    \item $(\#X)^{\aleph_0}=\#X\geq \kappa$. 
    \end{itemize}
\end{proposition}
\begin{proof}
    According to \cite{vopvenka1965rigid} (or \cite[Lemma 2.64]{adamek1994locally}), for any set $X$, there exists $R_2^X \subset X\times X$ such that $(X,R_2^X)$ is a rigid $\{R_2\}$-structure. The second condition is automatically satisfied, as $(X, \emptyset)$ is not rigid if $X$ is infinite. The third condition is satisfied if we take $X$ to be of size $\kappa^{\aleph_0}$. 
\end{proof}

We introduce the notation of pairing functions for later reference. 
\begin{definition}
    Let $X$ be a set and $(p_i\colon X\to X)_{i\in I}$ be a system of endomorphisms. We say that $(p_i)_{i\in I}$ is a \textbf{pairing system} if 
    \[
    \begin{tikzcd}[column sep =100pt]
        X\ar[r,"\prod_{i\in I} p_i"]& X^I
    \end{tikzcd}
    \]
    is a bijection. 
\end{definition}
\begin{notation}\label{NotationPairing}
    Assume  $(p_i\colon X\to X)_{i\in [n]}$ is a pairing system. 
    For an $n$-tuple of elements $(x_0, \dots, x_{n-1})\in X^n$, the corresponding element via the above bijection
    is denoted by 
    $\begin{pmatrix}
        x_0 \\
        \vdots\\
        x_{n-1}
    \end{pmatrix} \in X$. Tautologically, the function $p_i$ sends an element $\begin{pmatrix}
        x_0 \\
        \vdots\\
        x_{n-1}
    \end{pmatrix}$ to $x_i$.
\end{notation}

\begin{proof}[Proof of \cref{PropositionManyNonEmptyLexRigid}]
    For each positive integer $n$, let $S_n$ be a ternary relational symbol and $T_n$ be a unary relational symbol. We define $\L_0$ as $\{R_2\}$ and $\L$ as $\{R_2\} \cup \{S_n\}_{n\in \Z_{>0}} \cup \{T_n\}_{n\in \Z_{>0}}$.  
    We will show that $\L$ satisfies the conditions. By definition, $\L$ has no nullary relational symbol. 
    Take arbitrary infinite cardinal $\kappa$, and assume $\mathcal{X}_0=(X,R_2^X)\in\mathbf{Str}_{\L_0}$ satisfies the condition in \cref{PropositionManyRigid}. We will construct an $\L$-structure on $X$. 
    
    As $\#X\geq \kappa\geq \aleph_0$, we can take pairwise distinct elements $c_0,c_1,\dots \in X$. 
    By the same reason, we can take a family of endomorphisms $(p^n_i\colon X\to X)_{0\leq i<n<\omega}$ such that, for any positive integer $n$, the subfamily $(p^n_i)_{i\in [n]}$ is a pairing system of $X$. Then, we define $T_n^X$ as $\{c_0,c_1,\dots, c_{n-1}\}$ and $(x,y,z)\in S_n^X$ if and only if $y=c_i$ for some $i<n$ and $p^n_i(x)=z$. 
    We prove $\mathcal{X}=(X,R_2^X,S_n^X,T_n^X)$ is an inhabited-lex-rigid object in $\mathbf{Str}_{\L}$. 
    
    Let $n$ be a positive integer and $f\colon \mathcal{X}^n\to \mathcal{X}$ be a morphism of $\mathbf{Str}_{\L}$. We write $c$ for $f(c_0,c_1,\dots, c_{n-1})$. As $f$ preserves $T_n^X$, we have $c\in T_n^X$ and hence $c=c_{i_0}$ for some $i_0<n$. 
    Take an arbitrary $n$-tuple $(x_0,x_1,\dots, x_{n-1}) \in X^n$ and let $x\in X $ be the unique element such that $p^n_i(x)=x_i$ for all $i<n$. As $\mathcal{X}_0=(X,R_2^X)$ is rigid in $\mathbf{Str}_{\L_0}$, the morphism $\mathcal{X}_0 \overset{\Delta}{\to} \mathcal{X}_0^n \overset{f}{\to} \mathcal{X}_0$ must coincide with the identity function, in particular, we obtain $f(x,x,\dots, x)=x$.  As $f$ preserves $S_n^X$, we have $(x,c_{i_0},f(x_0,\dots, x_{n-1}))=(f(x,\dots,x),f(c_0,\dots, c_{n-1}),f(x_0,\dots ,x_{n-1}))\in S_n^X$, which implies $f(x_0,\dots, x_{n-1})=p^n_{i_0}(x)=x_{i_0}$. As we took $x_0,\dots, x_{n-1}$ arbitrarily, this shows $f$ is the $i_0$-th projection and $X$ is inhabited lex-rigid. 
    
    The second condition of $\mathcal{X}$ follows from $R_2^X\neq \emptyset$, $c_0\in T_n^X$ and $(c_0,c_0,p_0^n(c_0))\in S_n^X$. The third condition is trivial. 
\end{proof}

\section{Presheaf encoding of relational structures}\label{SectionPresheafEncodingAndMainTheorem}
    In this section, $\N$ denotes the set of non-negative integers and $\overline{\N}$ denotes $\N\cup \{\omega\}$. This set is equipped with the total order $0<1<2<3<\dots<\omega$. 
    In this section, let $\L$ be a relational language, i.e., a (small) set of relational symbols $\L=\{Q_\lm\}_{\lm\in \Lm}$. The arity of the relational symbol $Q_{\lm}$ is denoted by $n_{\lm}\in \N$. We assume $\L$ has no nullary relational symbols; i.e., for all $\lm\in \Lm$, $n_{\lm}\neq 0$.\memo{Setting ni iretai}

\begin{definition}\label{DefinitionPresheafEncoding}
    The \demph{encoding graph} of the language $\L$ is the directed graph $(V,E)$, where
    \begin{itemize}
        \item the set of vertices $V$ is the disjoint union of (the singleton of) the \demph{main vertex} $\{v\}$ and the set 
        $\{v_{\lm}\}_{\lm\in \Lm}$,  and
        \item the set of edges $E$ is the union of
        \begin{itemize}
            \item $\{p^n_i\colon v \to v\}$ for $0\leq i <n\leq\omega$.
            \item $\{p'^n_i\colon v \to v\}$ for $0\leq i <n<\omega$.
            \item $\{s'_{\lm}\colon v_{\lm}\to v\}_{\lm\in \Lm}$
            \item $\{r_{\lm}\colon v\to v_{\lm}\}_{\lm\in \Lm}$
        \end{itemize}
    \end{itemize}

    The \demph{encoding topos} $\E_{\L}$ of the language $\L$ is a full subcategory of the covariant functor category $ [(V,E), \Set]$ from (the free category over) the encoding graph to the category $\Set$. A functor $\X \colon (V,E) \to \Set$ belongs to $\E_{\L}$ if and only if the action of $r_\lm$ is a retract of the action of $s'_{\lm}$
    \[
    \begin{tikzcd}
        \X(v_{\lm})\ar[r, shift left,"s'_{\lm}", rightarrowtail]&\X(v)\ar[l,shift left, "r_{\lm}", twoheadrightarrow].
    \end{tikzcd}
    \]
\end{definition}

The encoding topos of a certain relational language will be proven to be the desired example of Grothendieck topos with proper class many quotient topoi. Now we emphasize it is a Grothendieck topos.
\begin{proposition}\memo{Ichiou Key nanode kaita zyoutyounara kesite iidesu}
    For a relational language $\L$, its encoding topos $\E_{\L}$ is a Grothendieck topos.
\end{proposition}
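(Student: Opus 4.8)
The goal is to exhibit $\E_{\L}$ as a category of sheaves on a small site, or equivalently as a reflective subcategory of a presheaf topos closed under the relevant operations. The plan is to realise $\E_{\L}$ as a full subcategory of the presheaf topos $[(V,E),\Set]$ cut out by a single left-exact condition (the existence of a retraction), and to recognise this condition as the condition of being a sheaf for a suitable Grothendieck topology on (the free category on) the encoding graph $(V,E)$. Concretely, I would first observe that $[(V,E),\Set]$ is itself a Grothendieck topos, being a presheaf category on the small category freely generated by $(V,E)$ (note that $V$ and $E$ are small since $\L$ is a small set of relational symbols and the arities range over the countable set $\overline{\N}$).

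The key step is to show that the defining property — that $r_{\lm}\circ s'_{\lm}=\id_{\X(v_{\lm})}$, i.e. that the action of $s'_{\lm}$ is split monic with the chosen splitting $r_{\lm}$ — is equivalent to an orthogonality/sheaf condition. The cleanest route is the following: the free category $\C$ on $(V,E)$ has a quotient category $\C'$ obtained by imposing the relations $r_{\lm}s'_{\lm}=\id$; then a functor $\X\colon \C\to\Set$ lies in $\E_{\L}$ precisely when it inverts (in the appropriate sense) the canonical functor $\C\to\C'$, but since we are only quotienting by composites of generators this is a full subcategory that is closed under limits and colimits computed pointwise in $[(V,E),\Set]$, hence itself a Grothendieck topos by the comparison/Giraud-type recognition. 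Alternatively and more robustly, I would present $\E_{\L}$ as an orthogonality class: $\X\in\E_{\L}$ iff $\X$ is right-orthogonal to the finite set of maps $\{\,\mathrm{coeq}(\id,\ \iota)\to \cdots\,\}$ encoding ``$s'_{\lm}$ admits a retraction equal to $r_{\lm}$'', and invoke that a small-orthogonality subcategory of a Grothendieck topos which is closed under the relevant colimits is again a Grothendieck topos; the retraction condition is expressible by requiring that a certain canonically defined morphism between finite limits/colimits of representables be inverted, which is a left-exact localisation condition.

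A more hands-on alternative, which I would actually carry out to keep the paper self-contained, is to describe the reflector explicitly. Given any $\X\colon (V,E)\to\Set$, define $\X'$ by replacing $\X(v_{\lm})$ with the image $s'_{\lm}(\X(v_{\lm}))\subseteq \X(v)$, keeping $\X(v)$ fixed, letting $s'_{\lm}$ be the inclusion and $r_{\lm}$ be the restriction of the original $r_{\lm}$ (one must check these remain well-defined and functorial, and that when a retraction already exists this operation lands in $\E_{\L}$ and is idempotent). This exhibits $\E_{\L}$ as a reflective subcategory of $[(V,E),\Set]$; since the reflector visibly preserves finite limits (it is a pointwise image, and finite limits are computed pointwise, with images being left exact in $\Set$ up to the usual care — here it suffices that the reflection is a localisation at monomorphisms-with-chosen-splitting), $\E_{\L}$ is a left-exact reflective subcategory of a Grothendieck topos, hence a Grothendieck topos.

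\textbf{Main obstacle.} The delicate point is verifying that the reflector $\X\mapsto\X'$ is left exact, or equivalently that the orthogonality class is closed under the colimits needed for Giraud's axioms; naively, ``image'' constructions need not commute with finite limits, so I expect the real work to be in checking that the specific condition here (a morphism $s'_{\lm}$ being split by the \emph{specified} map $r_{\lm}$) is genuinely a left-exact localisation — this should follow because the condition is a conjunction of equations between morphisms out of representables, i.e. it is cut out by a family of maps of the form $y(v_{\lm})\rightrightarrows (\text{pushout of representables})$ whose orthogonal class is automatically a topos, but making this precise without hand-waving is the step that requires care.
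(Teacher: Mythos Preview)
You are overcomplicating a one-line argument. The paper's proof is simply this: let $\C_{\L}$ be the category freely generated by the encoding graph $(V,E)$ together with the relations $r_{\lm}\circ s'_{\lm}=\id$; since $\L$ is small, $\C_{\L}$ is small, and by the universal property of a quotient-by-relations a functor from the free category on $(V,E)$ to $\Set$ satisfies the retraction condition if and only if it factors (uniquely) through $\C_{\L}$. Hence $\E_{\L}\simeq[\C_{\L},\Set]$ is literally a presheaf category on a small category, and therefore a Grothendieck topos. No sheaf condition, no localisation, no Giraud.

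You actually write down this quotient category $\C'$ in your first approach, but then veer away from the direct conclusion and instead appeal to Giraud-type recognition, orthogonality classes, or an explicit reflector. The step you are missing is that a functor satisfying an \emph{equation between composites of generating arrows} is exactly the same thing as a functor out of the category obtained by imposing that equation; there is nothing to reflect or localise, and the ``main obstacle'' you flag (left-exactness of a reflector) never arises. Incidentally, your hands-on reflector in the third approach is not correct as stated: replacing $\X(v_{\lm})$ by the image of $s'_{\lm}$ makes $s'_{\lm}$ monic, but it does not force the composite $r_{\lm}\circ s'_{\lm}$ to become the identity on that image, so further identifications would be needed --- precisely the kind of iteration that the presheaf-on-$\C_{\L}$ description avoids entirely.
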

\begin{proof}
    Let $\C_{\L}$ be the category freely generated by the encoding graph $(V,E)$ and relations $\{r_{\lm} \circ s'_{\lm} = \id\}$.
    As $\L$ is assumed to be small, the category $\C_{\L}$ is small. This implies that the encoding topos $\E_{\L}$ is a presheaf topos over a small category $\E_{\L} \simeq \Func{\C_{\L}}{\Set}$. 
\end{proof}

\begin{notation}
    For $\X\in \ob(\E_{\L})$ and $f\in \mathrm{mor}(\E_{\L})$, we denote $\X(v)$ as $\underline{\X}$ and $f_v$ as $\underline{f}$. 
\end{notation}

\begin{definition}
    A \demph{presheaf encoding} of an $\L$-structure $(X, Q_\lm^X \subset X^{n_\lm}$) is an object of $\E_{\L}$, such that
    \begin{itemize}
        \item the value at the main vertex is the underlying set $X$,
        \item for each $\lambda\in \Lambda$, the value at the vertex $v_{\lambda}$ is the set $X\times Q_{\lambda}^{X}$,
        \item for any $0\neq n \in \overline{\N}$, $(p^n_i)_{i;i<n}$ is a pairing system,
        \item for any $n \in \N$ and $i<n$, 
        $p'^n_i$ is the composite of
        \[
            \begin{tikzcd}[column sep = 50pt]
            X\ar[r,"\prod_{0\leq i<2} p^2_i"]&X^2\ar[r,"\id_X\times p^n_i"]&X^2\ar[r,"(\prod_{0\leq i<2} p^2_i)^{-1}"]&X,
            \end{tikzcd}
        \]
        \item for any $\lm \in \Lm$, we define a function $s_{\lm} \colon Q_\lm^X \to X$ by the composite of 
        
        \[
        \begin{tikzcd}[column sep = 50pt]
            Q_\lm^X\ar[r,"\iota"]&X^{n_\lm}\ar[r,"(\prod_{0\leq i<{n_\lm}} p^{n_\lm}_{i})^{-1}"]&X, 
        \end{tikzcd}
        \]
        and 
        \item for any $\lm \in \Lm$, $s'_{\lm}$ is the composite of 
        \[
            \begin{tikzcd}[column sep = 50pt]&X\times Q_{\lm}^X\ar[r,"\id_X\times s_{\lm}"]&X^2\ar[r,"(\prod_{0\leq i<2} p^2_i)^{-1}"]&X
            \end{tikzcd}
        \]
        and $r_{\lm}$ is left inverse of $s'_{\lm}$. 
        
    \end{itemize}
\end{definition}

Notice that a presheaf encoding might not exist (depending on the cardinality of $X$), and even if it exists, it is not unique (even up to isomorphisms). If $(\#X)^{\aleph_0} = \# X$ and $Q_{\lm}^X \neq \emptyset$ for every $\lm \in \Lm$, then it has a presheaf encoding.


Also note that the same symbols, $p^n_i, p'^n_i, s'_{\lm}, r_{\lm}$, are used to refer to three different things: edges in the encoded graph, morphisms in the generated category $\C_{\L}$, and the corresponding actions on presheaf encodings. Since the remainder of this paper primarily concerns the actions, and does not directly deal with the edges and morphisms, we believe this abuse of notation should not lead to any substantial confusion.

\begin{setting}\label{SettingZero}
To prepare for later reference, we summarize the following setting:
    \begin{itemize}
        \item $\L=\{Q_{\lm}\}_{\lm \in \Lm}$ is a relational language in which each relational symbol $Q_{\lm}$ has positive arity $n_{\lm}>0$.
        \item $(X,\{Q_\lm^X \subset X^{n_\lm}\}_{\lm \in \Lm})$ is an $\L$-structure such that $\#X^{\aleph_0}=\#X \geq \aleph_0$ and $Q_{\lm}^X\neq \emptyset$ for all $\lm \in \Lm$. 
        \item $\X \in \ob(\E_{\L})$ is a presheaf encoding of the $\L$-structure $(X,\{Q_\lm^X \subset X^{n_\lm}\}_{\lm \in \Lm} )$.
    \end{itemize}
\end{setting}

\begin{notation}\label{NotationVertical}
     Using \cref{NotationPairing}, we can write the action of $p'^{n}_{i}$ as
\[
\begin{pmatrix}
x\\
\begin{pmatrix}
        x_0 \\
        \vdots\\
        x_{n-1}
\end{pmatrix} 
\end{pmatrix}
    \mapsto 
    \begin{pmatrix}
        x \\
        x_i
\end{pmatrix},
\]
 the function $s_{\lm}$ as 
\[
Q_{\lm}^X\ni(x_0,x_1,\dots x_{n_{\lm}-1})\mapsto \begin{pmatrix}
        x_0 \\
        \vdots\\
        x_{n_{\lm}-1}
\end{pmatrix} \in X,
\]
 and the action of $s'_{\lm}$ as
 \[
 X\times Q_{\lm}^X\ni(x,(x_0,x_1,\dots x_{n_\lm-1}))\mapsto
 \begin{pmatrix}
x\\
\begin{pmatrix}
        x_0 \\
        \vdots\\
        x_{n_\lm-1}
\end{pmatrix} 
\end{pmatrix}\in X.
\]
\end{notation}



\begin{setting}\label{SettingOne}
To prepare for later reference, we summarize the following setting:
    \begin{itemize}
        \item All settings in \cref{SettingZero}.
        \item $F$ is an object of the topos $\OC$, and $F$ is in the essential image of the embedding  $\iota_{\ast}\colon \iOC \to \OC$.
        \item $f\colon \X^n \to \X \otimes F$ is a morphism in the encoding topos $\E_{\L}$, where $n$ is a \textbf{positive} integer.
    \end{itemize}
\end{setting}
\begin{proposition}\label{ProppositionImagefBounded}
    Under the \cref{SettingOne}, the set
    \[
    \{\Le(\underline{f}(x))\mid x= (x_0, \dots x_{n-1})\in \underline{\X^n}\} \subset \N
    \]
    is bounded.
\end{proposition}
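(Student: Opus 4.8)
The plan is to exploit the $\omega$-pairing function $(p^\omega_i)_{i<\omega}$ encoded in $\X$ to compress an $n$-tuple of arguments into a single element of $\underline{\X}$, then push the morphism $f$ through a diagonal and use the rigidity already baked into $\X$ to control the output. Concretely, for $x=(x_0,\dots,x_{n-1})\in\underline{\X^n}$, I would first note that since $n\leq\omega$ there is a unique $w\in\underline{\X}$ with $p^\omega_i(w)=x_i$ for $i<n$ (padding with an arbitrary fixed element for $i\geq n$, or more cleanly using the finite pairing system $(p^n_i)_{i<n}$ so that $w$ literally equals the column vector $\begin{pmatrix}x_0\\\vdots\\x_{n-1}\end{pmatrix}$). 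The key observation is that each projection $\underline{\X^n}\to\underline{\X}$, $x\mapsto x_i$, factors through the single element $w$ via the action of the edge $p'^n_i$ (or $p^n_i$), i.e. $x_i = p^n_i(w)$ inside the presheaf $\X$. Therefore $\underline f(x)$ is the image of a single vertex element under a composite that only depends on $w$, and it suffices to bound $\Le(\underline f(w,w,\dots,w))$ — no, more precisely, to bound the length of $\underline f$ evaluated along the diagonal-after-pairing map — uniformly in $w$.

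The second and main step is to reduce to \emph{one} element. Using \cref{LemmaNaturalityOfCoend} and \cref{LemmaPresfeafDecreasing}: the naturality square for the edges $p^n_i\colon v\to v$ in $\E_\L$ says $f$ commutes with the action of these edges, so $\Le(\underline f(x))$ is controlled by the lengths of $\underline f$ on the ``column'' elements. The cleanest route: consider the morphism $g\colon \X^n\to\X^n$ (or rather the composite $\X\xrightarrow{\text{pairing}}\X^n\xrightarrow{f}\X\otimes F$) — actually the right move is to observe that for fixed $w$ all $n$ arguments $x_i=p^n_i(w)$ are obtained by applying edge-actions to $w$, hence $\underline f(x) = \underline f(p^n_0(w),\dots,p^n_{n-1}(w))$, and by naturality of $f$ with respect to each edge $p^n_i$ this equals the value of a morphism applied to a single tuple built functorially from $w$. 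This lets me replace the index set $\underline{\X^n}$ by the index set $\underline{\X}$, so the question becomes whether $\{\Le(\underline f(\text{(pairing of $w$)}))\mid w\in\underline\X\}$ is bounded. Then I apply the same trick again along the $\omega$-pairing: any $w$ itself is $p^\omega_0$-preimage of... hmm — better, I would argue that all elements of $\underline\X$ that arise this way are in the image of a \emph{fixed} element under edge-actions, because the $\omega$-pairing system lets us realize countably many "coordinates" simultaneously; applying $f$ and \cref{LemmaPresfeafDecreasing} then shows every $\Le(\underline f(x))$ is bounded by $\Le$ of $\underline f$ at that single fixed element, which is some fixed natural number.

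Let me state the step I expect to be the crux. The heart is: \textbf{produce a single element $u\in\underline\X$ (or a single element of $\underline{\X^n}$) such that for every $x\in\underline{\X^n}$, the element $\underline f(x)$ is the image of $\underline f(u)$ under the action of some edge of the encoding graph.} Given such a $u$, \cref{LemmaPresfeafDecreasing} immediately yields $\Le(\underline f(x))\leq \Le(\underline f(u)) =: N < \infty$, finishing the proof. The construction of $u$ is where the $\omega$-pairing function is essential (this is exactly the ``$\omega$ pairing function is used to control the image of function'' remark in the introduction): one encodes \emph{all} the elements $x_0,\dots,x_{n-1}$ that could ever appear — across all tuples $x$ — as coordinates $p^\omega_j(u)$ of one master element $u$, using that $\#X^\omega=\#X$ so $\underline\X$ is closed under the $\omega$-ary pairing, and then each actual argument $x_i$ is recovered from $u$ by a single edge-action $p^\omega_{j(i)}$, so that $x=(x_0,\dots,x_{n-1})$ is the image of $u$ under a map assembled from edges of $(V,E)$; naturality of $f$ and \cref{LemmaPresfeafDecreasing} do the rest. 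The obstacle is making "all arguments across all tuples" precise with only countably many pairing slots available — this works because for a bound we only need the arguments $x_i$ to range over $\underline\X$, and a single $\omega$-pairing encodes a copy of $\underline\X^\omega$, hence in particular $n$ independent copies of $\underline\X$, inside one element; if $n$ were allowed to be $\omega$ this would still go through, which is why the statement only needs $n$ positive rather than finite per se, but finiteness of $n$ makes the bookkeeping trivial.
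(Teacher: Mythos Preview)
Your proposal has a genuine gap at the crux step. You want a single master element $u\in\underline{\X}$ (or $\underline{\X^n}$) such that \emph{every} $x\in\underline{\X^n}$ is the image of $u$ under some edge action; then \cref{LemmaPresfeafDecreasing} would give a uniform bound $\Le(\underline f(u))$. But this is impossible in general: there are only countably many finite compositions of edges in the encoding graph, so the orbit of any single $u$ under edge actions is at most countable, whereas $X$ (and hence $\underline{\X^n}$) is typically uncountable. Your sentence ``a single $\omega$-pairing encodes a copy of $\underline\X^\omega$, hence in particular $n$ independent copies of $\underline\X$, inside one element'' conflates the set-level bijection $X\cong X^\omega$ with what one element records: a single $u$ stores one specific $\omega$-tuple $(p^\omega_j(u))_{j<\omega}$ of elements of $X$, not all of $X$.

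The paper's proof repairs exactly this by arguing by contradiction. If the lengths are unbounded, choose a \emph{countable} sequence $x_k=(x_{k,0},\dots,x_{k,n-1})$ with $\Le(\underline f(x_k))>k$. Now only countably many elements need to be encoded, so for each coordinate $0\le i<n$ use the $\omega$-pairing bijection $X\cong X^\N$ to produce $x_{\omega,i}\in X$ with $p^\omega_k(x_{\omega,i})=x_{k,i}$ for all $k$. Setting $x_\omega=(x_{\omega,0},\dots,x_{\omega,n-1})$ and $L=\Le(\underline f(x_\omega))$, naturality of $f$ with respect to the edge $p^\omega_L$ together with \cref{LemmaPresfeafDecreasing} gives
\[
L=\Le(\underline f(x_\omega))\ \ge\ \Le\bigl((p^\omega_L\otimes F)\,\underline f(x_\omega)\bigr)=\Le\bigl(\underline f((p^\omega_L)^n x_\omega)\bigr)=\Le(\underline f(x_L))>L,
\]
a contradiction. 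So your instinct to use the $\omega$-pairing and \cref{LemmaPresfeafDecreasing} is right, but the quantifier structure must be: first suppose unboundedness, then pack the (countably many) witnesses into one element---not try to pack all of $X^n$ at once.
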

\begin{proof}
Suppose the set is not bounded. We will deduce the contradiction. We can take an infinite sequence $\{x_k = (x_{k,0} \dots x_{k,n-1}) \in \underline{\X^n}\}_{k\in \N}$ such that the length of $\underline{f}(x_k)$ is greater than $k$.

Since the function
        \[
        \begin{tikzcd}[column sep = 50pt]
            X\ar[r,"\prod_{i\in \N} p^\omega_i"]&X^\N
        \end{tikzcd}
        \]
        is bijective, we can take the unique element $x_{\omega,i}\in X$ such that $p^{\omega}_{k}(x_{\omega,i}) = x_{k,i}$ for each $0\leq i<n$. Consider the $n$-tuple $x_{\omega}=(x_{\omega,0}, \dots x_{\omega,n-1}) \in X^n$ and let $L$ be the length of the element $\underline{f}(x_{\omega})$. 

        \Cref{LemmaPresfeafDecreasing} and the naturality of $f$ at $p_{L}^{\omega}$ implies 
        \[
        L= \Le (\underline{f}(x_\omega)) \geq \Le( (p^{\omega}_{L} \otimes F) \underline{f}(x_\omega)) = \Le( \underline{f}((p^{\omega}_{L})^n  (x_\omega)))
        =
        \Le( \underline{f}(x_L)) > L.
        \]
        This is a contradiction.
\end{proof}



\begin{setting}\label{SettingAlpha}
For the later references, we summarize a list of settings:
\begin{itemize}
    \item All of \cref{SettingZero} $\subset$ \cref{SettingOne}. 
    \item Let $m$ denote the maximum length of $\underline{f}(\underline{\X^n})$. 
    \item Fix an element $(a_0, \dots, a_{n-1})\in X^n=\underline{\X^n}$ such that the length of $f(a_0, \dots, a_{n-1})$ is $m$.
    \item Fix a minimal expression  $(\alpha_0, \dots ,\alpha_{m-1})\otimes(\sigma \in F[m])$ of $\underline{f}(a_0, \dots, a_{n-1})$.
\end{itemize}
    
\end{setting}

\begin{proposition}\label{PropositionInducedFunction}
    Suppose the \cref{SettingAlpha}. 
    For any $n$-tuple of elements $(b_0, \dots, b_{n-1})$, the length of $\underline{f}\left(
    \begin{pmatrix}
        a_0\\
        b_0
    \end{pmatrix},
    \dots,
    \begin{pmatrix}
        a_{n-1}\\
        b_{n-1}
    \end{pmatrix}
    \right) $ is $m$. Furthermore, there uniquely exists $(\beta_0,\beta_1,\dots ,\beta_{m-1})\in X^m$ such that
    \[
    \underline{f}\left(
    \begin{pmatrix}
        a_0\\
        b_0
    \end{pmatrix},
    \dots,
    \begin{pmatrix}
        a_{n-1}\\
        b_{n-1}
    \end{pmatrix}
    \right) = 
    \left(
    \begin{pmatrix}
        \alpha_0\\
        \beta_0
    \end{pmatrix},
    \dots,
    \begin{pmatrix}
        \alpha_{m-1}\\
        \beta_{m-1}
    \end{pmatrix}
    \right)
    \otimes (\sigma \in F[m]).
    \]
\end{proposition}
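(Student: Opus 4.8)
The plan is to exploit the naturality of $f$ along the edge $p^2_0\colon v\to v$ of the encoding graph to pull everything back to the already-fixed element $\underline{f}(a_0,\dots,a_{n-1})$, and then to recover the tuple $(\beta_i)$ by projecting along $p^2_1$. Set $t\coloneqq\underline{f}\left(\begin{pmatrix}a_0\\b_0\end{pmatrix},\dots,\begin{pmatrix}a_{n-1}\\b_{n-1}\end{pmatrix}\right)$. On $\X^n$ the edge $p^2_0$ acts coordinatewise and on $\X\otimes F$ it acts as $p^2_0\otimes F$ (\cref{PropositionFunctorTensorDescription}), so the naturality square of $f$ at this edge reads $(p^2_0\otimes F)(t)=\underline{f}(a_0,\dots,a_{n-1})$, using $p^2_0\begin{pmatrix}a_i\\b_i\end{pmatrix}=a_i$. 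For any function $g\colon X\to X$ the action $g\otimes F$ does not increase length — if $(X\lar{\ell}A)\otimes(\mu\in FA)$ is a minimal expression of $u$, then $(X\lar{g\circ\ell}A)\otimes(\mu\in FA)$ is an expression of $(g\otimes F)(u)$ by \cref{LemmaNaturalityOfCoend} — hence $m=\Le(\underline{f}(a_0,\dots,a_{n-1}))=\Le\bigl((p^2_0\otimes F)(t)\bigr)\le\Le(t)$; together with $\Le(t)\le m$ (maximality of $m$) this gives $\Le(t)=m$, the first assertion.

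For existence of $(\beta_i)$, I would choose a minimal expression $(X\lar{h}[m])\otimes(\rho\in F[m])$ of $t$ (its index set has $m$ elements, so it may be taken to be $[m]$). Then $(X\lar{p^2_0\circ h}[m])\otimes(\rho\in F[m])$ is an expression of $\underline{f}(a_0,\dots,a_{n-1})$ of length $m$, hence minimal. Since $X\neq\emptyset$ (\cref{SettingZero}) and $F$ lies in the essential image of $\iota_{\ast}$ (\cref{SettingOne}), \cref{PropositionStrongUniquenessMinimalExpression} applies to this expression and to the fixed minimal expression $(X\lar{\alpha}[m])\otimes(\sigma\in F[m])$, where $\alpha(i)=\alpha_i$, and yields a bijection $\phi\colon[m]\to[m]$ with $\alpha=(p^2_0\circ h)\circ\phi$ and $\phi\cdot\sigma=\rho$. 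Then $t=h\otimes\rho=h\otimes(\phi\cdot\sigma)=(h\circ\phi)\otimes\sigma$, and $p^2_0\circ(h\circ\phi)=\alpha$. Writing $h'\coloneqq h\circ\phi$ and $\beta_i\coloneqq p^2_1(h'(i))$, the fact that $(p^2_0,p^2_1)$ is a pairing system gives $h'(i)=\begin{pmatrix}\alpha_i\\\beta_i\end{pmatrix}$, which is the asserted identity; when $m=0$ it simply reads $t=(X\lar{}[0])\otimes(\sigma\in F[0])$, again from the length-zero case of \cref{PropositionStrongUniquenessMinimalExpression}.

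For uniqueness, if $(\beta_i)$ and $(\beta'_i)$ both satisfy the displayed identity, then $\left(\begin{pmatrix}\alpha_0\\\beta_0\end{pmatrix},\dots,\begin{pmatrix}\alpha_{m-1}\\\beta_{m-1}\end{pmatrix}\right)\otimes(\sigma\in F[m])$ and the analogous expression built from $(\beta'_i)$ are both expressions of $t$ of length $m=\Le(t)$, hence both minimal; \cref{PropositionStrongUniquenessMinimalExpression} then provides a bijection $\psi\colon[m]\to[m]$ with $\begin{pmatrix}\alpha_i\\\beta_i\end{pmatrix}=\begin{pmatrix}\alpha_{\psi(i)}\\\beta'_{\psi(i)}\end{pmatrix}$ for every $i$. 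Applying $p^2_0$ gives $\alpha_i=\alpha_{\psi(i)}$, and $\alpha$ is injective by \cref{LemmaMinimalInjectivity}, so $\psi=\id$; applying $p^2_1$ then gives $\beta_i=\beta'_i$. I do not anticipate a genuine obstacle: the only point needing attention is that \cref{PropositionStrongUniquenessMinimalExpression}, unlike its non-inhabited counterpart \cref{PropositionWeakUniquenessMinimalExpression}, holds for all lengths, so the small cases $m=0,1$ need no separate treatment beyond the parenthetical remark above; the rest is bookkeeping with the pairing and tensor notations.
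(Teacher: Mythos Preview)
Your proof is correct and follows essentially the same route as the paper's: both exploit the compatibility of $f$ with the edge action of $p^2_0$ to reduce to the fixed minimal expression $(\alpha_0,\dots,\alpha_{m-1})\otimes\sigma$, then invoke \cref{PropositionStrongUniquenessMinimalExpression} to realign a minimal expression of $t$ with $\sigma$ and read off the $\beta_i$ via the pairing. The only difference is presentational: you first establish $\Le(t)=m$ and then choose a minimal expression of $t$, whereas the paper starts with a minimal expression of $t$ written directly in pairing form and deduces $l=m$ from it; your version is slightly more explicit about the naturality step and the length-monotonicity, but the logical content is the same.
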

\begin{proof}

    Let
    \[
    \left(
    \begin{pmatrix}
        \gamma_0\\
        \delta_0
    \end{pmatrix},
    \dots,
    \begin{pmatrix}
        \gamma_{l-1}\\
        \delta_{l-1}
    \end{pmatrix}
    \right)
    \otimes (\tau \in F[l])
    \]
    be a minimal expression of $\underline{f}\left(
    \begin{pmatrix}
        a_0\\
        b_0
    \end{pmatrix},
    \dots,
    \begin{pmatrix}
        a_{n-1}\\
        b_{n-1}
    \end{pmatrix}
    \right)$. By the definition of $m$, we have $l\leq m$.
    Applying $p^2_0$, we have
    \[
    (\alpha_0,\alpha_1,\dots ,\alpha_{m-1})\otimes \sigma= (\gamma_0,\gamma_1,\dots ,\gamma_{l-1})\otimes \tau.
    \]
    Here, we use  the naturality of $f$ at $p^2_0$. 
    As $(\alpha_0,\alpha_1,\dots ,\alpha_{m-1})\otimes \sigma$ is a minimal expression, we get $l\geq m$, so $l=m$ and the first half of this proposition is true. Using \cref{PropositionStrongUniquenessMinimalExpression},
    there exists a bijection $h\colon [m]\to [m]$ such that $h^{-1} \cdot \tau = \sigma$ and $\gamma_{h(i)}=\alpha_i$ for all $i<m$. 
    Therefore, we obtain
    \[
    \underline{f}\left(
    \begin{pmatrix}
        a_0\\
        b_0
    \end{pmatrix},
    \dots,
    \begin{pmatrix}
        a_{n-1}\\
        b_{n-1}
    \end{pmatrix}
    \right)=
    \left(
    \begin{pmatrix}
        \gamma_0\\
        \delta_0
    \end{pmatrix},
    \dots,
    \begin{pmatrix}
        \gamma_{m-1}\\
        \delta_{m-1}
    \end{pmatrix}
    \right)
    \otimes (h \cdot\sigma)
    =
    \left(
    \begin{pmatrix}
        \alpha_0\\
        \delta_{h(0)}
    \end{pmatrix},
    \dots,
    \begin{pmatrix}
        \alpha_{m-1}\\
        \delta_{h(m-1)}
    \end{pmatrix}
    \right)
    \otimes 
    \sigma
    \]
    and this equation completes the proof of the existence. 
    The uniqueness of $\beta_i$ follows from \cref{PropositionStrongUniquenessMinimalExpression}, since $\alpha_{0}, \dots ,\alpha_{m-1}$ are mutually distinct by \cref{LemmaMinimalInjectivity}.
\end{proof}

\begin{setting}\label{SettingBeta}
For later references, we summarize a list of settings:
    \begin{itemize}
        \item All of \cref{SettingZero} $\subset$ \cref{SettingOne} $\subset$ \cref{SettingAlpha}.
        \item Let $\phi_i \colon X^n \to X\ (0\leq i<m)$ denote the function that sends an $n$-tuple of elements $(b_0, \dots ,b_{n-1})$ to $\beta_{i}$ defined in \cref{PropositionInducedFunction}.
    \end{itemize}
\end{setting}

\begin{lemma}\label{lemmaPhiCommutesp}\memo{\cref{PropositionPhiPreservingLstr} wo simesunoni irunode tuika.}
For any $(b_0,b_1,\cdots, b_{n-1})\in X^n$ and $i,j,l\in \N$ which satisfies $j<l$ and $i<n$, we have 
\[
\phi_i\bigl(p^l_j(b_0),p^l_j(b_1),\cdots, p^l_j(b_{n-1})\bigr)=p^l_j\bigl(\phi_i(b_0,b_1,\cdots, b_{n-1})\bigr).
\]
\end{lemma}
\begin{proof}
    By definition of $\phi_i$, we have
    \[
        \underline{f}\left(
    \begin{pmatrix}
        a_0\\
        b_0
    \end{pmatrix},
    \dots,
    \begin{pmatrix}
        a_{n-1}\\
        b_{n-1}
    \end{pmatrix}
    \right) = 
    \left(
    \begin{pmatrix}
        \alpha_0\\
        \phi_0(b)
    \end{pmatrix},
    \dots,
    \begin{pmatrix}
        \alpha_{m-1}\\
        \phi_{m-1}(b)
    \end{pmatrix}
    \right)
    \otimes \sigma,
    \]
    where $b$ denotes the $n$-tuple $(b_0,b_1,\cdots, b_{n-1})$.
    Applying the edge action of $p'^l_j$, we obtain
    \[
        \underline{f}\left(
    \begin{pmatrix}
        a_0\\
        p^l_j(b_0)
    \end{pmatrix},
    \dots,
    \begin{pmatrix}
        a_{n-1}\\
        p^l_j(b_{n-1})
    \end{pmatrix}
    \right) = 
    \left(
    \begin{pmatrix}
        \alpha_0\\
        p^l_j(\phi_0(b))
    \end{pmatrix},
    \dots,
    \begin{pmatrix}
        \alpha_{m-1}\\
        p^l_j(\phi_{m-1}(b))
    \end{pmatrix}
    \right)
    \otimes \sigma.
    \]
    Here, we use the fact that $\underline{f}$ commutes with the edge action of $p'^l_j$. This equation completes the proof. 
\end{proof}

\begin{proposition}\label{PropositionCommutesOfSection}
    Under the \cref{SettingBeta}, we have the commutative diagram
    \[
    \begin{tikzcd}
    &X^m\ar[d,"{-}\otimes \sigma"]\\
        X^n\ar[r, "\underline{f}"']\ar[ru, "\prod_{0\leq i< m}\phi_i"]& \underline{\X \otimes F}
    \end{tikzcd}
    \]
    in the category of sets $\Set$, not (yet) in $\E_{\L}$.
\end{proposition}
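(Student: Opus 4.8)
The plan is to verify the triangle pointwise at the main vertex $v$. I would fix an arbitrary $n$-tuple $b=(b_0,\dots,b_{n-1})\in X^n=\underline{\X^n}$ and prove
\[
\underline{f}(b_0,\dots,b_{n-1})=\bigl(\phi_0(b),\dots,\phi_{m-1}(b)\bigr)\otimes\sigma .
\]
Unwinding \cref{NotationOtimesSigmaMorphism} (together with \cref{PropositionFunctorTensorDescription} at $v$), the right-hand side is precisely the image of $b$ under the composite $X^n\to X^m\to\underline{\X\otimes F}$ given by $\prod_{0\leq i<m}\phi_i$ followed by ${-}\otimes\sigma$, so this equality is exactly the commutativity of the triangle in $\Set$.

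The only substantive input is \cref{PropositionInducedFunction}, which, combined with the definition of the $\phi_i$ in \cref{SettingBeta}, already yields for our fixed $b$
\[
\underline{f}\!\left(\begin{pmatrix}a_0\\b_0\end{pmatrix},\dots,\begin{pmatrix}a_{n-1}\\b_{n-1}\end{pmatrix}\right)=\left(\begin{pmatrix}\alpha_0\\\phi_0(b)\end{pmatrix},\dots,\begin{pmatrix}\alpha_{m-1}\\\phi_{m-1}(b)\end{pmatrix}\right)\otimes\sigma .
\]
To pass from this \dq{paired} identity to the one I want, I would apply the edge action of $p^2_1\colon v\to v$, which is one of the generating edges of the encoding graph (\cref{DefinitionPresheafEncoding}). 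On a presheaf encoding the action of $p^2_1$ at $v$ is the second projection of the pairing bijection $X\cong X^2$ (it sends the element with first coordinate $x$ and second coordinate $y$ to $y$); hence on the power $\X^n$ it acts coordinatewise (the power in $\E_{\L}$ is computed pointwise), and on $\X\otimes F$ it acts as $p^2_1\otimes F$, namely $(x_0,\dots,x_{m-1})\otimes\sigma\mapsto\bigl(p^2_1(x_0),\dots,p^2_1(x_{m-1})\bigr)\otimes\sigma$, by \cref{LemmaNaturalityOfCoend} and \cref{PropositionFunctorTensorDescription}. Since $f$ is a morphism of $\E_{\L}$, its component $\underline{f}$ at $v$ intertwines these two actions; applying $p^2_1$ to both sides of the displayed equation therefore turns the left-hand argument list into $(b_0,\dots,b_{n-1})$ and the right-hand side into $\bigl(\phi_0(b),\dots,\phi_{m-1}(b)\bigr)\otimes\sigma$, which is exactly the identity displayed first. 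As $b$ was arbitrary, the triangle commutes in $\Set$.

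I do not expect a real obstacle: all the genuine work — existence and uniqueness of $\beta_i=\phi_i(b)$ — was already carried out in \cref{PropositionInducedFunction}. The only points needing care are bookkeeping of the edge actions: confirming that $p^2_1$ appears among the edges $\{p^n_i\colon v\to v\}$, that its action on $\X^n$ is the $n$-fold power of its action on $\X$, and that the right leg ${-}\otimes\sigma$ of the triangle is indeed the set map $(x_i)_i\mapsto(x_i)_i\otimes\sigma$ at $v$, so that the computed equality literally is the commutativity statement. As the proposition already warns, nothing in this argument shows that $\prod_{0\leq i<m}\phi_i$ or ${-}\otimes\sigma$ is a morphism of $\E_{\L}$; only the square of underlying sets is claimed to commute.
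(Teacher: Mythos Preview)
Your proposal is correct and follows exactly the paper's approach: the paper's proof is the single line ``This proposition is obtained by applying $p^2_1\otimes F$ to the equation in \cref{PropositionInducedFunction},'' and you have simply unpacked that line in detail, including the naturality of $\underline{f}$ with respect to the edge action of $p^2_1$.
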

\begin{proof}
    This proposition is obtained by applying $p^2_1\otimes F$ to the equation in \cref{PropositionInducedFunction}.
\end{proof}

\begin{proposition}\label{PropositionPhiPreservingLstr}
    Suppose \cref{SettingBeta}, and assume $\lm\in \Lm$ and $b_0,b_1,\dots, b_{n-1} \in \Image(s_\lm)\subset X$. Then, $\phi_i(b_0,\dots ,b_{n-1})\in \Image{(s_\lm)}$ for all $i<m$. In addition, $\phi_i$ is a morphism of $\Lstr$ for all $i<m$. 
\end{proposition}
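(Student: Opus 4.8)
The plan is to derive the first assertion from the naturality of $f$ along the edge $s'_{\lm}\colon v_{\lm}\to v$ together with the inhabited Comparison lemma (\cref{LemmaStrongUniqunessMinimalExpression}), and then to obtain the second assertion from the first one together with \cref{lemmaPhiCommutesp}. Throughout I will use that in the presheaf encoding $\X(v_{\lm})=X\times Q_{\lm}$ with $s'_{\lm}(x,q)=\left(\begin{smallmatrix}x\\ s_{\lm}(q)\end{smallmatrix}\right)$, so that
\[
\Image(s'_{\lm})=\left\{\left(\begin{smallmatrix}x\\ y\end{smallmatrix}\right)\mid x\in X,\ y\in\Image(s_{\lm})\right\},
\]
and that, by \cref{PropositionFunctorTensorDescription}, $(\X\otimes F)(v_{\lm})=(X\times Q_{\lm})\otimes F$ and the edge action of $s'_{\lm}$ on $\X\otimes F$ is $s'_{\lm}\otimes F\colon (X\times Q_{\lm})\otimes F\to X\otimes F$.

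For the first assertion, I would write each $b_j=s_{\lm}(q_j)$ with $q_j\in Q_{\lm}$, set $b=(b_0,\dots,b_{n-1})$, and feed the element $((a_0,q_0),\dots,(a_{n-1},q_{n-1}))\in\X^n(v_{\lm})=(X\times Q_{\lm})^n$ --- whose first coordinates form the base point of \cref{SettingAlpha} --- through the naturality square of $f$ at the edge $s'_{\lm}$. Going down then right, the edge action of $s'_{\lm}$ in $\X^n$ sends it to $\left(\left(\begin{smallmatrix}a_0\\ b_0\end{smallmatrix}\right),\dots,\left(\begin{smallmatrix}a_{n-1}\\ b_{n-1}\end{smallmatrix}\right)\right)$, and by \cref{PropositionInducedFunction} applying $\underline{f}$ gives
\[
\underline{f}\!\left(\begin{pmatrix}a_0\\ b_0\end{pmatrix},\dots,\begin{pmatrix}a_{n-1}\\ b_{n-1}\end{pmatrix}\right)=\left(\begin{pmatrix}\alpha_0\\ \phi_0(b)\end{pmatrix},\dots,\begin{pmatrix}\alpha_{m-1}\\ \phi_{m-1}(b)\end{pmatrix}\right)\otimes(\sigma\in F[m]),
\]
a minimal expression (its length $m$ is, by \cref{PropositionInducedFunction}, the length of the element it denotes). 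Going right then down, for any expression $g\otimes\rho$ of $f_{v_{\lm}}((a_0,q_0),\dots,(a_{n-1},q_{n-1}))$ (so $g\colon C\to X\times Q_{\lm}$, $\rho\in FC$) we obtain $(s'_{\lm}\otimes F)(g\otimes\rho)=(s'_{\lm}\circ g)\otimes\rho$. Since these two expressions denote the same element and the first is minimal, \cref{LemmaStrongUniqunessMinimalExpression} gives $\left(\begin{smallmatrix}\alpha_i\\ \phi_i(b)\end{smallmatrix}\right)\in\Image(s'_{\lm}\circ g)\subseteq\Image(s'_{\lm})$ for every $i<m$; applying $p^2_1$ then yields $\phi_i(b)\in\Image(s_{\lm})$, as claimed.

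For the second assertion, I would fix $\lm\in\Lm$ and unwind that an element of $Q_{\lm}^n\subseteq(X^n)^{n_{\lm}}$ is, via the transpose, an $n_{\lm}$-tuple $(c_0,\dots,c_{n_{\lm}-1})$ with $c_k=(c_{k,0},\dots,c_{k,n-1})\in X^n$ and $(c_{0,j},\dots,c_{n_{\lm}-1,j})\in Q_{\lm}$ for every $j<n$; the goal becomes $(\phi_i(c_0),\dots,\phi_i(c_{n_{\lm}-1}))\in Q_{\lm}$ for all $i<m$. Setting $b_j:=s_{\lm}(c_{0,j},\dots,c_{n_{\lm}-1,j})\in\Image(s_{\lm})$ gives $p^{n_{\lm}}_k(b_j)=c_{k,j}$, and by the first assertion there is $(d_0,\dots,d_{n_{\lm}-1})\in Q_{\lm}$ with $\phi_i(b_0,\dots,b_{n-1})=s_{\lm}(d_0,\dots,d_{n_{\lm}-1})$, i.e.\ $p^{n_{\lm}}_k(\phi_i(b_0,\dots,b_{n-1}))=d_k$. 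Since $n_{\lm}\in\N$, \cref{lemmaPhiCommutesp} gives
\[
d_k=p^{n_{\lm}}_k\bigl(\phi_i(b_0,\dots,b_{n-1})\bigr)=\phi_i\bigl(p^{n_{\lm}}_k(b_0),\dots,p^{n_{\lm}}_k(b_{n-1})\bigr)=\phi_i(c_{k,0},\dots,c_{k,n-1})=\phi_i(c_k)
\]
for each $k<n_{\lm}$, so $(\phi_i(c_0),\dots,\phi_i(c_{n_{\lm}-1}))=(d_0,\dots,d_{n_{\lm}-1})\in Q_{\lm}$; as $\lm$ was arbitrary, each $\phi_i$ is a morphism of $\Lstr$.

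The step I expect to be the main obstacle is choosing the right input for the $s'_{\lm}$-naturality square in the first assertion: the first coordinates must be anchored at the base point $(a_0,\dots,a_{n-1})$ of \cref{SettingAlpha} so that \cref{PropositionInducedFunction} produces a genuinely minimal expression, and one must notice the decomposition of $\Image(s'_{\lm})$ recorded above, which is exactly what lets the Comparison lemma push the membership back into $\Image(s_{\lm})$. The Comparison lemma \cref{LemmaStrongUniqunessMinimalExpression} applies because $F$ is in the essential image of $\iota_{\ast}$ by \cref{SettingOne}; everything else is routine, the one point requiring care being the transpose identification $Q_{\lm}^n\subseteq(X^n)^{n_{\lm}}$ used in the second assertion.
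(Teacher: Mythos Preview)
Your proof is correct and follows essentially the same route as the paper: both use naturality of $f$ along $s'_{\lm}$ together with the inhabited Comparison lemma to trap the $\begin{pmatrix}\alpha_i\\ \phi_i(b)\end{pmatrix}$ in $\Image(s'_{\lm})$, apply $p^2_1$, and then deduce the $\Lstr$-morphism property from the first part via \cref{lemmaPhiCommutesp}. Your treatment of the second assertion spells out the transpose identification $Q_{\lm}^n\subseteq (X^n)^{n_{\lm}}$ more explicitly than the paper, but the underlying argument is identical.
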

\begin{proof}
    Take $\lm$ and $b_0,b_1,\dots ,b_{n-1}$ as above. By the definition of $s'_\lm$, we can take $c_i \in \X(v_{\lm}) 
    $ which satisfies $s'_\lm(c_i)=    \begin{pmatrix}
        a_i\\
        b_i
    \end{pmatrix}$. We take $l$ and $\gamma_0,\gamma_1,\dots,\gamma_{l-1}\in \X(v_{\lm})$ so that
    \[
    (f_{v_{\lm}})(c_0,c_1,\dots, c_{n-1})=(\gamma_0,\gamma_1,\dots ,\gamma_{l-1}) \otimes (\tau\in F[l]).
    \]
    Let $b=(b_0,b_1,\dots ,b_{n-1}) \in X^n$. By the naturality of $f$ at $s_{\lm}'$, we obtain the the following commutative diagram    
\[
\begin{tikzcd}
{\mathbb{X}(v_\lambda)^n = (X \times Q_\lambda)^n}
  & {\underline{\mathbb{X}}^n = X^n} \\
{\mathbb{X}(v_\lambda) \otimes F}
  & {\underline{\mathbb{X}} \otimes F},
\arrow[from=1-1, to=1-2, "({s'_\lambda})^n"]
\arrow[from=1-1, to=2-1, "f_{v_\lambda}", swap]
\arrow[from=1-2, to=2-2, "\underline{f}"]
\arrow[from=2-1, to=2-2, "s'_\lambda \otimes F", swap]
\end{tikzcd}
\]
which implies the equation
\[
\begin{tikzcd}[
]
{(c_0, \dots, c_{n-1})}
  & {\left(
    \begin{pmatrix}
        a_0\\
        b_0
    \end{pmatrix},
    \dots,
    \begin{pmatrix}
        a_{n-1}\\
        b_{n-1}
    \end{pmatrix}
    \right)} \\
{(\gamma_0, \dots, \gamma_{\ell-1}) \otimes \tau}
  & {(s'_{\lambda}(\gamma_0),\dots, s'_{\lambda}(\gamma_{\ell-1}))\otimes \tau=\left(
    \begin{pmatrix}
        \alpha_0\\
        \phi_0(b)
    \end{pmatrix},
    \dots,
    \begin{pmatrix}
        \alpha_{m-1}\\
        \phi_{m-1}(b)
    \end{pmatrix}
    \right)
    \otimes \sigma.}
\arrow[from=1-1, to=1-2, mapsto]
\arrow[from=2-1, to=2-2, mapsto]
\arrow[from=1-1, to=2-1, mapsto]
\arrow[from=1-2, to=2-2, mapsto]
\end{tikzcd}
\]
    By \cref{LemmaStrongUniqunessMinimalExpression}, this equality implies $\Image{(s'_\lm)}\supset \left\{    \begin{pmatrix}
        \alpha_0\\
        \phi_0(b)
    \end{pmatrix},
    \dots,
    \begin{pmatrix}
        \alpha_{m-1}\\
        \phi_{m-1}(b)
    \end{pmatrix}\right\}$. From the definition of $s'_{\lm}$, it follows that for any $i < m$ there exists $\gamma_i\in X$, $\delta_i \in Q_{\lm}^X$ which satisfies
    $\begin{pmatrix}
        \gamma_i\\
        s_{\lm}(\delta_i)
    \end{pmatrix}
    =
    \begin{pmatrix}
        \alpha_i\\
        \phi_i(b)
    \end{pmatrix}
    $. This shows $\phi_{i}(b) \in \Image(s_\lm)\subset X$. 

    Consider the diagram below. (To avoid confusion, we write $p_{i,n_{\lm}}$ and $Q_{\lm,X}$ instead of $p_{i}^{n_{\lm}}$ and $Q_{\lm}^{X}$; All superscripts in the diagram mean exponents.) By \cref{lemmaPhiCommutesp}, the upper square commutes. Due to the first half of the statements, we obtain $\phi_i(\Image(s_{\lm}^n))\subset\Image(s_{\lm})$. 
    Therefore, we obtain $\phi_i^{n_{\lm}}\bigl(Q_{\lm,X^n}\bigr)\subset Q_{\lm,X}$.  
    
    \begin{center}
    \begin{tikzcd}[row sep=2.5em, column sep=10em]
	X^n & X \\
	(X^n)^{n_\lambda} & X^{n_\lambda} \\
	Q_{\lambda,X^n} & Q_{\lambda,X}
	\arrow[from=1-1, to=2-1, "{\prod_{i\in [n_{\lambda}]} (p_{i,n_{\lambda}})^n }","\cong"']
	\arrow[from=1-2, to=2-2, "{\prod_{i\in [n_{\lambda}]} p_{i,n_{\lambda}} }"',"\cong"]
	\arrow[from=2-1, to=2-2, "\phi_i^{n_\lambda}"]
	\arrow[from=3-1, to=2-1, rightarrowtail]
	\arrow[from=3-2, to=2-2, rightarrowtail]
	\arrow[from=3-1, to=3-2, dashed]
	\arrow[from=1-1, to=1-2, "\phi_i"]
	\arrow[from=3-1, to=1-1, bend left=60,"s_{\lm}^n"]
	\arrow[from=3-2, to=1-2, bend right=60,"s_{\lm}"']
    \end{tikzcd}
    \end{center}

    
\end{proof}

\begin{setting}\label{SettingRigid}
For the later references, we summarize a list of settings:
\begin{itemize}
    \item All of \cref{SettingZero} $\subset$ \cref{SettingOne} $\subset$ \cref{SettingAlpha} $\subset$ \cref{SettingBeta}.
    \item $(X,\{Q_{\lm}^X\}_{\lm\in\Lm})$ is an inhabited-lex-rigid object of $\Lstr$. 
\end{itemize}
\end{setting}

\begin{proposition}\label{PropositionAllMorphismAreNaive}
        Under the \cref{SettingRigid}, there exists $h\colon m\to n$ which makes the following diagram commutative in the encoding topos $\E_{\L}$:
    \[
    \begin{tikzcd}
    &\X^m\ar[d,"{-}\otimes \sigma"]\\
        \X^n\ar[r, "f"']\ar[ru, "{\X}^h"]& \X \otimes F
    \end{tikzcd}
    \]
    In other words, we have $f={-}\otimes \tau$, where $\tau= h\cdot \sigma$. In addition, such $\tau \in F[n]$ is unique. 
\end{proposition}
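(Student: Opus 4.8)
The plan has three parts. First, I will use inhabited-lex-rigidity to recognize each auxiliary function $\phi_i$ as a coordinate projection, which produces the function $h$. Second, I will show that the resulting equality of set maps at the main vertex $v$ already forces the equality of the two morphisms in $\E_{\L}$. Third, I will extract the uniqueness of $\tau$ from cancellation against an injection.

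\emph{Producing $h$.} By \cref{PropositionPhiPreservingLstr}, each $\phi_i\colon X^n\to X$ is a morphism $(X,\{Q_\lm\})^n\to(X,\{Q_\lm\})$ in $\Lstr$. Since $n>0$ and $(X,\{Q_\lm\})$ is inhabited-lex-rigid (\cref{SettingRigid}), there is a unique index $h(i)\in[n]$ with $\phi_i=p_{h(i)}$; this defines $h\colon[m]\to[n]$. Then $\prod_{0\le i<m}\phi_i\colon X^n\to X^m$ is exactly the $v$-component $\underline{\X^h}$ of the morphism $\X^h\colon\X^n\to\X^m$ induced by $h$ on powers, so \cref{PropositionCommutesOfSection} becomes $\underline f=(-\otimes\sigma)\circ\underline{\X^h}$. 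A vertexwise computation using the tensor identity $(g\circ k)\otimes\sigma=g\otimes(k\cdot\sigma)$ (\cref{NotationTensorNotationOfElement}) shows $(-\otimes\sigma)\circ\X^h=-\otimes\tau$ as morphisms of $\E_{\L}$, where $\tau\coloneqq h\cdot\sigma=F(h)(\sigma)\in F[n]$. In particular $f$ and $-\otimes\tau$ have the same component at $v$.

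\emph{From $v$ to all of $\E_{\L}$.} Both $f$ and $-\otimes\tau$ are natural transformations $\X^n\Rightarrow\X\otimes F$ between objects of $\E_{\L}$, and I claim any such transformation is determined by its component at $v$. For each $\lm$, the action $\X(s'_\lm)\colon\X(v_\lm)\to\X(v)$ is a split monomorphism with retraction $\X(r_\lm)$ (the defining condition of $\E_{\L}$), so $(\X\otimes F)(s'_\lm)=\X(s'_\lm)\otimes F$ (\cref{PropositionFunctorTensorDescription}) is again a split monomorphism, because $-\otimes F$ is a functor. Naturality along the edge $s'_\lm$ then characterizes the $v_\lm$-component as the unique arrow whose composite with this monomorphism equals the $v$-component precomposed with $\X(s'_\lm)^{n}$; hence agreement at $v$ propagates to agreement at each $v_\lm$, and $f=-\otimes\tau$ in $\E_{\L}$, which is the asserted commutativity of the triangle with $\X^h$ the morphism constructed above. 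I expect this to be the main obstacle: one has to be sure the bare pointwise equality at $v$ really travels across the edges $s'_\lm$ of the encoding graph, which is exactly where the retract condition built into $\E_{\L}$ and the preservation of split monomorphisms by $-\otimes F$ are used. The degenerate case $m=0$, where $\X^h$ is the unique map to the terminal object and $h$ is the empty function, is covered uniformly by the same argument.

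\emph{Uniqueness of $\tau$.} Suppose $-\otimes\tau=f=-\otimes\tau'$ for $\tau,\tau'\in F[n]$. Their components at $v$ agree, so, choosing an injection $j\colon[n]\hookrightarrow X$ (possible since $X$ is infinite by \cref{SettingRigid}) and evaluating both sides at $(j(0),\dots,j(n-1))\in X^n$, we obtain $j\otimes\tau=j\otimes\tau'$ in $X\otimes F$. Since $n>0$, so $[n]\neq\emptyset$, and $j$ is injective, \cref{LemmaCancellationInjection} gives $\tau=\tau'$.
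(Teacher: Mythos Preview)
Your proof is correct and follows essentially the same approach as the paper: both use \cref{PropositionPhiPreservingLstr} together with inhabited-lex-rigidity to identify each $\phi_i$ as a projection, define $h$ accordingly, invoke \cref{PropositionCommutesOfSection} for commutativity at $v$, use the split-mono property of the $s'_\lm$-action to propagate to the vertices $v_\lm$, and conclude uniqueness via \cref{LemmaCancellationInjection} against an injective tuple. Your write-up is somewhat more explicit (spelling out why $-\otimes F$ preserves split monomorphisms and noting the $m=0$ case), but the argument is the same.
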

\begin{proof}
    As $n>0$ and the $\L$-structure $(X, \{Q_\lm^X \subset X^{n_\lm}\}_{\lm \in \Lm})$ is inhabited-lex-rigid, \cref{PropositionPhiPreservingLstr} implies that $\phi_i$ is a projection map for all $i<m$. 
    So, we define $h\colon [m]\to [n]$ as the unique function which satisfies $\phi_i=p_{h(i)}$ for all $i<m$. (Do not confuse the $i$-th projection $p_i:X^n\to X$ with $p^n_i:X\to X$.) We will show this $h$ satisfies the condition of the statements. 
    
 The commutativity at the main vertex $v$ follows from \cref{PropositionCommutesOfSection}. 
 The commutativity at the vertex
 $v_{\lm}$ follows from the fact that $s'_{\lm}$-action is a section of $r_{\lm}$-action.

 We show the uniqueness of $\tau$. Assume ${-}\otimes \tau={-}\otimes \tau'$. As $\#X\geq \aleph_0$, we can take pairwise distinct elements $x_0,x_1,\dots ,x_{n-1}\in X$. Substituting $(x_0,x_1,\dots ,x_{n-1})$ in this equation, we obtain $(x_0,x_1,\dots, x_{n-1}) \otimes \tau=(x_0,x_1,\dots ,x_{n-1})\otimes \tau'$. By \cref{LemmaCancellationInjection}, $\tau =\tau'$. 
\end{proof}

\begin{remark}
    We write down the hypothesis of the above proposition explicitly.
    \begin{itemize}
        \item $\L=\{Q_{\lm}\}_{\lm \in \Lm}$ is a relational language and $\L$ has no nullary relational symbol.
	\item $(X,\{Q_\lm^X \subset X^{n_\lm}\}_{\lm \in \Lm})$ is an inhabited-lex-rigid $\L$-structure.
        \item $(\#X)^{\aleph_0}=\#X \geq \aleph_0$ and $Q_\lm$ is non-empty for every $\lm \in \Lm$.
        \item $\X \in \ob(\E_{\L})$ is a presheaf encoding of the $\L$-structure $(X,\{Q_\lm^X \subset X^{n_\lm}\}_{\lm \in \Lm} )$.
	\item $F$ is an object of the topos $\OC$, and assume $F$ is in the essential image of $\iota_{\ast}\colon \iOC\to\OC$.
        \item $f\colon \X^n \to \X \otimes F$ is a morphism in the encoding topos $\E_{\L}$, where $n$ is a \rm{positive} integer.
        \item Let $m$ denote the maximum length of $f(\underline{\X^n})$. \Cref{ProppositionImagefBounded} guarantees that such $m$ exists.
    \end{itemize}
    
\end{remark}

\begin{theorem}\label{MainTheorem}
    For some countable relational language $\L$, the presheaf topos $\E_{\L}$ has proper class many (mutually non-isomorphic) inhabited-topos-rigid objects. As a consequence, $\E_{\L}$ has proper class many (mutually non-equivalent) quotient topoi.
\end{theorem}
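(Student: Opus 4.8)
The plan is to assemble the machinery developed in the previous sections. Fix the countable relational language $\L$ produced by \Cref{PropositionManyNonEmptyLexRigid}; recall that it has no nullary relational symbols. For an arbitrary cardinal $\kappa$, \Cref{PropositionManyNonEmptyLexRigid} supplies an $\L$-structure $(X_\kappa,\{Q_\lm\}_{\lm\in\Lm})$ which is inhabited-lex-rigid in $\Lstr$, has $Q_\lm\neq\emptyset$ for every $\lm\in\Lm$, and satisfies $(\#X_\kappa)^{\aleph_0}=\#X_\kappa\geq\kappa$; in particular $X_\kappa$ is infinite. Since $(\#X_\kappa)^{\omega}=\#X_\kappa$ and each $Q_\lm$ is non-empty, this $\L$-structure admits a presheaf encoding $\X_\kappa\in\ob(\E_\L)$, as observed after \Cref{DefinitionPresheafEncoding}.

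First I would verify that $\X_\kappa$ is an inhabited object of $\E_\L$. Using the equivalence $\E_\L\simeq\Func{\C_\L}{\Set}$, an object is inhabited exactly when its value at every vertex of the encoding graph is non-empty. The value at the main vertex $v$ is $\underline{\X_\kappa}=X_\kappa\neq\emptyset$, and the value at each $v_\lm$ is $X_\kappa\times Q_\lm$ (the domain of the split monomorphism $s'_\lm$), which is non-empty because both factors are. Hence $\X_\kappa$ is inhabited.

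Next I would apply \Cref{PropositionUnwindingInhabitedToposRigidity}: the inhabited object $\X_\kappa$ is inhabited-topos-rigid if and only if, for every $F\in\ob(\OC)$ in the essential image of $\iota_{\ast}\colon\iOC\to\OC$, every positive integer $n$, and every morphism $f\colon\X_\kappa^n\to\X_\kappa\otimes F$ in $\E_\L$, there exists a unique $\tau\in F[n]$ with $f={-}\otimes\tau$. All the standing hypotheses of \cref{SettingRigid} are met in this situation --- $\L$ has no nullary relational symbol, $(X_\kappa,\{Q_\lm\})$ is inhabited-lex-rigid, $X_\kappa$ is infinite, $\X_\kappa$ is a presheaf encoding of it, $F$ lies in the essential image of $\iota_{\ast}$, and $n>0$ --- so this is precisely the assertion of \Cref{PropositionAllMorphismAreNaive}. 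Therefore each $\X_\kappa$ is inhabited-topos-rigid. Since $\#\underline{\X_\kappa}=\#X_\kappa\geq\kappa$ for every cardinal $\kappa$ and isomorphic objects of $\E_\L$ have the same cardinality at the main vertex, the family $\{\X_\kappa\}_\kappa$ realizes a proper class of isomorphism classes of inhabited-topos-rigid objects of $\E_\L$. Finally, \Cref{PropositionIsotopoiImpliesIsoobjects} turns this into a proper class of mutually non-equivalent quotient topoi of $\E_\L$, which proves the theorem.

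I expect almost no genuine obstacle here, since the substantive content has already been proved: the combinatorial core is \Cref{PropositionAllMorphismAreNaive}, which in turn relies on the length and minimal-expression calculus of the classifying topoi $\OC$ and $\iOC$, on the boundedness statement \Cref{ProppositionImagefBounded}, and on the pairing systems built into the encoding topos; the existence of inhabited-lex-rigid $\L$-structures of unbounded size is \Cref{PropositionManyNonEmptyLexRigid}, which ultimately invokes Kunen's theorem (\Cref{TheoremNoExistanceReinhardtCardinal}). The only points requiring care are bookkeeping ones: checking that the structures produced by \Cref{PropositionManyNonEmptyLexRigid} simultaneously satisfy every standing hypothesis collected in \cref{SettingRigid} (no nullary symbols, $X$ infinite, all relations non-empty so the presheaf encoding exists), confirming inhabitedness of the encoding, and noting that the condition ``$F$ in the essential image of $\iota_{\ast}$'' appearing in \Cref{PropositionUnwindingInhabitedToposRigidity} is the very same condition used in \Cref{PropositionAllMorphismAreNaive}.
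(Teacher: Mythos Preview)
Your proposal is correct and follows essentially the same route as the paper's proof: choose $\L$ from \Cref{PropositionManyNonEmptyLexRigid}, encode the resulting inhabited-lex-rigid structures as objects of $\E_\L$, invoke \Cref{PropositionAllMorphismAreNaive} together with \Cref{PropositionUnwindingInhabitedToposRigidity} to obtain inhabited-topos-rigidity, and conclude via \Cref{PropositionIsotopoiImpliesIsoobjects}. Your explicit verification that $\X_\kappa$ is inhabited (by checking the values at $v$ and at each $v_\lm$) is a small detail the paper leaves implicit, but otherwise the arguments coincide.
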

\begin{proof}
    Assume the relational language $\L=\{Q_{\lm}\}_{\lm\in\Lm}$ satisfies condition of \cref{PropositionManyNonEmptyLexRigid}. We show this $\L$ satisfies the condition of the theorem. Take an arbitrary infinite cardinal $\kappa$, and let $\mathcal{X}=(X,Q_{\lm}^X)$ be an $\L$-structure that satisfies condition of \cref{PropositionManyNonEmptyLexRigid}. As $(\#X)^{\aleph_0}=\#X$ and $Q_{\lm}^X\neq \emptyset$ for each $\lm \in \Lm$, there exists a presheaf encoding of the $\L$-structure $\mathcal{X}$, $\X\in\ob(\E_{\L})$. This is an inhabited-topos-rigid object by \cref{PropositionAllMorphismAreNaive} and \cref{PropositionUnwindingInhabitedToposRigidity}. 


    As we can take $\kappa$ arbitrary large, the cardinality of $\underline{\X}$ is unbounded for inhabited-topos-rigid $\X$. So, there exist (mutually non-isomorphic) proper class many inhabited-topos-rigid objects. The last part of this theorem follows from \cref{PropositionIsotopoiImpliesIsoobjects}. \memo{Yosasou~~}
\end{proof}

\begin{corollary}\label{CorollaryCountableMonoid}
    Let $M_{\omega}$ be the free monoid generated by a countably infinite set. Then the $M_{\omega}$-action topos $\PSh(M_{\omega})$ has proper class many (mutually non-isomorphic) quotient topoi.
\end{corollary}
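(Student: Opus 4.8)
The plan is to deduce \Cref{CorollaryCountableMonoid} from \Cref{MainTheorem} by exhibiting an equivalence of categories $\E_{\L}\simeq\PSh(M_\omega)$ for the particular countable relational language $\L$ produced in \Cref{MainTheorem}, or more precisely by showing that the encoding topos of \emph{any} countable relational language embeds as a presheaf topos $\Func{\C_\L}{\Set}$ over a category $\C_\L$ whose idempotent-completion is Morita-equivalent to $M_\omega$. Recall that $\E_{\L}\simeq\Func{\C_\L}{\Set}$, where $\C_\L$ is the category freely generated by the encoding graph $(V,E)$ modulo the relations $r_\lm\circ s'_\lm=\id$. The key observation is that $\PSh(M_\omega)=\Func{M_\omega}{\Set}$ (a presheaf topos on a one-object category), and that for presheaf topoi $\Func{\C}{\Set}\simeq\Func{\D}{\Set}$ whenever $\C$ and $\D$ have equivalent Cauchy (idempotent) completions (the Morita theorem for presheaf categories).

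First I would analyze the category $\C_\L$. Its objects are the main vertex $v$ together with the auxiliary vertices $v_\lm$ for $\lm\in\Lm$. Each $v_\lm$ is a retract of $v$ via $s'_\lm\colon v_\lm\to v$ and $r_\lm\colon v\to v_\lm$ with $r_\lm\circ s'_\lm=\id_{v_\lm}$; hence in the Cauchy completion $\overline{\C_\L}$ every $v_\lm$ is isomorphic to the image of the idempotent $e_\lm:=s'_\lm\circ r_\lm$ on $v$, so $\overline{\C_\L}$ is the Cauchy completion of the full subcategory on the single object $v$, i.e.\ of the monoid $\mathrm{End}_{\C_\L}(v)$. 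Thus $\E_{\L}\simeq\Func{\C_\L}{\Set}\simeq\Func{N_\L}{\Set}=\PSh(N_\L)$ where $N_\L:=\mathrm{End}_{\C_\L}(v)$ is a monoid. The next step is to identify $N_\L$: it is generated by the endomorphisms $p^n_i$ ($0\le i<n\le\omega$), $p'^n_i$ ($0\le i<n<\omega$), and $e_\lm=s'_\lm\circ r_\lm$ ($\lm\in\Lm$), subject only to the relations that hold in the free category modulo $r_\lm\circ s'_\lm=\id$. Since $\L$ is countable, this generating set is countable, so $N_\L$ is a countably generated monoid; moreover it is easy to check it is free on these generators (the only imposed relations $r_\lm\circ s'_\lm=\id$ involve the other objects $v_\lm$ and do not collapse any word in $\mathrm{End}(v)$ — one verifies by the normal-form description of morphisms in a category presented by generators and relations that no nontrivial identity among the listed endomorphisms of $v$ is forced), hence $N_\L\cong M_\omega$, the free monoid on countably many generators. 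Therefore $\E_{\L}\simeq\PSh(M_\omega)$, and \Cref{MainTheorem} transports the proper-class-many inhabited-topos-rigid objects — hence proper-class-many mutually non-equivalent quotient topoi — from $\E_{\L}$ to $\PSh(M_\omega)$.

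I expect the main obstacle to be the verification that $N_\L=\mathrm{End}_{\C_\L}(v)$ is genuinely a \emph{free} monoid on a countably infinite set, rather than some other countably generated monoid: one must argue carefully that the quotient of the free category by the relations $r_\lm\circ s'_\lm=\id$ does not introduce unexpected identifications among composites of endomorphisms of $v$ (for instance that $e_\lm$'s do not become idempotent-redundant in a way that changes the monoid, and that no word $p^{n_1}_{i_1}\cdots$ equals another). This is a confluence/normal-form argument for the presented category. If $N_\L$ turned out not to be exactly free, the corollary would still follow as long as $N_\L$ is countably generated and $\PSh(M_\omega)$ contains $\PSh(N_\L)$ as a quotient or retract — but the cleanest route, and the one matching the abstract's claim about $\PSh(M_\omega)$, is to show $N_\L\cong M_\omega$ on the nose. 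A secondary, purely routine point is citing the Morita theorem for presheaf topoi (equivalence of presheaf categories iff equivalence of Cauchy completions, \cite{maclane1994sheaves}) and checking the Cauchy completion of a monoid $M$ regarded as a one-object category consists of the retracts of the unique object, so that adjoining the split idempotents $e_\lm$ does not change the presheaf category.
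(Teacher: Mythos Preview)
Your main claim---that $N_\L=\mathrm{End}_{\C_\L}(v)$ is a \emph{free} monoid---is false, and this is exactly the point you flagged as the ``main obstacle''. The relation $r_\lm\circ s'_\lm=\id$ does collapse words in $\mathrm{End}(v)$: with $e_\lm:=s'_\lm\circ r_\lm$ one computes
\[
e_\lm\circ e_\lm \;=\; s'_\lm\circ r_\lm\circ s'_\lm\circ r_\lm \;=\; s'_\lm\circ \id\circ r_\lm \;=\; e_\lm,
\]
so every $e_\lm$ is idempotent in $N_\L$. A free monoid has no non-identity idempotents, hence $N_\L\not\cong M_\omega$ and your intended equivalence $\E_\L\simeq\PSh(M_\omega)$ on the nose does not hold. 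Your parenthetical justification (``the imposed relations involve the other objects $v_\lm$ and do not collapse any word in $\mathrm{End}(v)$'') overlooks precisely this interaction.

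The paper takes the route you mention as a fallback. It identifies $N_\L$ as the monoid presented by the generators $p^n_i$, $p'^n_i$, $e_\lm$ subject to $e_\lm e_\lm=e_\lm$, observes $\E_\L\simeq\PSh(N_\L)$ via Cauchy completion (your analysis of this part is fine), and then---since $N_\L$ is merely a \emph{countable} monoid---takes any surjective monoid homomorphism $M_\omega\twoheadrightarrow N_\L$. This surjection induces a connected geometric morphism $\PSh(M_\omega)\to\PSh(N_\L)\simeq\E_\L$, and composing with the proper-class-many connected morphisms $\E_\L\to\iOC$ from \Cref{MainTheorem} finishes the argument. So your instinct for the rescue was correct; you just need to abandon the freeness claim and run the quotient argument instead.
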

\begin{proof}
Let $\L$ be a countable relational language in \cref{MainTheorem}.
    Consider a monoid $N_{\L}$, freely generated by the set $\{p^n_i\colon v \to v\}_{0\leq i <n\leq\omega}\cup\{p'^n_i\colon v \to v\}_{0\leq i <n<\omega}\cup \{e_{\lm}\}_{\lm \in \Lm}$, and the relations $\{e_{\lm}e_{\lm} = e_{\lm}\}_{\lm \in \Lm}$. Considering the Cauchy completion of the monoid $N_{\L}$, the encoding topos $\E_{\L}$ is equivalent to the presheaf topos $\PSh(N_{\L})$. Since $N$ is a countable monoid, there exists a surjective monoid homomorphism $M_{\omega}\twoheadrightarrow N_{\L}$, which induces a connected geometric morphism $\PSh(M_{\omega}) \to \PSh(N_{\L}) \simeq \E_{\L}$. 
    In fact, if a functor $\C \to \D$ between small categories $\C$ and $\D$ is full and bijective on objects, then the induced geometric morphism $\PSh(\C)\to \PSh(\D)$ is hyperconnected \cite[A.4.6.9]{johnstone2002sketchesv1} and therefore connected \cite[C.1.5]{johnstone2002sketchesv2}.
    This completes the proof since a composite of two connected geometric morphisms is again connected.
\end{proof}

\begin{remark}
In fact, \cref{MainTheorem} and \cref{CorollaryCountableMonoid} are equivalent.
Let $\L$ be a countable relational language.
    Forcing $e_{\lm}$ to be the neutral element, we can construct a surjective monoid homomorphism in the opposite way $N_{\L} \twoheadrightarrow M_{\omega}$ as well. 
    The two topoi $\PSh(M_{\omega})$ and $\E_{\L}$ are quotient topoi of each other.
\end{remark}
\bibliographystyle{alpha}
\bibliography{BiblioKamioLawvere} 

\end{document}